\newtheorem{theorem}{Theorem}[section]
\newtheorem{lemma}[theorem]{Lemma}
\theoremstyle{definition}
\newtheorem{definition}[theorem]{Definition}
\newtheorem{remark}{Remark}
\newcommand{\R}{\mathbb{R}}
\newcommand{\vecE}{{\boldsymbol{E}}}
\newcommand{\vecf}{{\boldsymbol{f}}}
\newcommand{\vecg}{{\boldsymbol{g}}}
\newcommand{\vecH}{{\boldsymbol{H}}}
\newcommand{\vecu}{{\boldsymbol{u}}}
\newcommand{\vecw}{  {\boldsymbol{w}} }
\newcommand{\vecv}{  {\boldsymbol{v}}  }
\newcommand{\vecp}{  {\boldsymbol{p}}  }
\newcommand{\vecq}{  {\boldsymbol{q}}  }
\newcommand{\vecL}{{\boldsymbol{L}}}
\newcommand{\vecU}{{\boldsymbol{U}}}
\newcommand{\bz}{{\boldsymbol{z}}}
\newcommand{\by}{{\boldsymbol{y}}}
\newcommand{\bx}{{\boldsymbol{x}}}
\newcommand{\be}{{\boldsymbol{e}}}
\newcommand{\mdiv}{\mbox{div\,}}
\newcommand{\mcurl}{\mbox{curl\,}}
\newcommand{\smcurl}{\overrightarrow{\mbox{curl}}_\Sigma\,}
\newcommand{\tG}{\mathbb{G}}
\title[A Sampling Type Method in an Electromagnetic Waveguide] 
      {A Sampling Type Method in an Electromagnetic Waveguide}
\author[Shixu Meng]{}
\subjclass{Primary: 35R30, 78A46; Secondary: 35Q61.}
 \keywords{Sampling method, waveguide, Maxwell's equations, inverse scattering, electromagnetic}
 \email{shixumeng@amss.ac.cn}
\begin{document}
\maketitle

\centerline{\scshape Shixu Meng}
\medskip
{\footnotesize
 \centerline{Institute of Applied Mathematics}
   \centerline{Academy of Mathematics and Systems Science}
      \centerline{Chinese Academy of Sciences}
   \centerline{ Beijing, 100190, China}
}

\bigskip


\begin{abstract}
We propose a sampling type method to image scatterer in an electromagnetic waveguide. The waveguide terminates at one end and the measurements are on the other end and in the far field.  The imaging function is based on  integrating the measurements and a known function over the measurement surface directly. The design and analysis of such imaging function are based on a factorization of a data operator given by the measurements. We show by analysis that the imaging function peaks inside the scatterer, where the coercivity of the factorized operator and the design of the known function play a central role. Finally, numerical examples are provided to demonstrate the performance of the imaging method.
\end{abstract}

\section{Introduction}  \label{Introduction}
Inverse scattering  plays an important role in non-destructive testing, medical imaging, geophysical exploration and numerous problems associated with target identification. There have been recent interests in inverse scattering for waveguides, mainly motivated by their numerous applications in ocean acoustics,  non-destructive testing of slender structures, imaging in and of tunnels \cite{baggeroer1993overview,rizzo2010ultrasonic,haack1995state}.
\cite{xu2000generalized} studied the generalized dual space indicator method for underwater imaging. The linear sampling method and other related sampling methods were studied in acoustic waveguides \cite{bourgeois2008linear,bourgeois2012use} and in elastic waveguides \cite{bourgeois2011use,bourgeois2013use}. We also mention the sampling methods for acoustic waveguides studied in \cite{monk2012sampling,arens2011direct,borcea2019factorization}. More recently, \cite{bourgeois2014identification} developed the sampling methods for identification of defects in a periodic waveguide. \cite{monk2016inverse} developed the linear sampling method for an acoustic waveguide in the time domain. \cite{monk2019near} investigated the linear sampling method in an electromagnetic waveguide. We also mention the inverse scattering in an acoustic waveguide \cite{sun2013reconstruction}, the time migration imaging method in an acoustic terminating waveguide \cite{tsogka2017imaging},  in an electromagnetic waveguide \cite{chen2017direct} and the time reversal imaging in an electromagnetic terminating waveguide \cite{borcea2015imaging}.

Our goal is to design a robust sampling type method for an electromagnetic terminating waveguide. Two sampling methods, the linear sampling method and factorization method, have been developed in both free space and waveguides, see \cite{colton2012inverse,cakoni2016qualitative,kirsch2008factorization,cakoni2016inverse} for more details. Such sampling method uses the measurements to define a data operator (in the far field or the near field), and aims to image the scatterer by solving a linear integral equation at each sampling point over a searching domain. The solutions have different generic properties for sampling points inside and outside the scatterer, and therefore allow us to design an imaging function to determine the scatterer. To solve the linear integral equation, which is an ill-posed problem, one needs to apply certain regularization techniques \cite{colton2003linear,cakoni2011linear}. For the full Maxwell's equations, the regularization may burn some computations. Such methods also require a priori estimate on the measurement noise. An alternative way, in contrast to solving the linear integral equation, is to use the data operator directly. Here we mention the orthogonality sampling \cite{potthast2010study,griesmaier2011multi} and a novel sampling method \cite{liu2017novel} developed in the free space. 

In the electromagnetic waveguides, there have been few work \cite{chen2017direct,borcea2015imaging,monk2019near}. In particular, \cite{chen2017direct}  considered a reverse time migration imaging method, where the Helmholtz-Kirchhoff identity was used to show that the imaging function peaks in the scatterer. \cite{borcea2015imaging} applied a time reversal imaging method with optimization. \cite{monk2019near} considered a linear sampling method and also discussed the generalized linear sampling method. Our work follows closely \cite{monk2019near}. Instead of solving a linear integral equation, we propose to design a sampling type method using a  data operator $\mathcal{N}$ and its factorization $\overline{\mathcal{H}^* \overline{\mathcal{T}\mathcal{H} }}$. The factorization provides us insights and powerful analytic tools to show that our imaging function behaves like $\|\mathcal{H} \Psi(\cdot;\bz) \be_j\|$ (with $L^2$-norm in the scatterer) for some ``test function" $\Psi(\cdot;\bz)\be_j$. The analysis of the imaging function is then demonstrate by the behavior of $\mathcal{H} \Psi(\cdot;\bz) \be_j$, which will be proved to be similar to the first derivative of the dyadic Green function.

We now introduce the inverse scattering problem in a heuristic setting. For any $\bx \in \R^3$, we use the following Cartesian coordinate representation $\bx = (x_1,x_2,x_3)$. The waveguide is denoted by $W:=\Sigma \times (-\infty, 0)$, where it has a { rectangular} cross-section $\Sigma:=(0,a)\times (0,b)$, extends to $-\infty$, and has a terminating end at $x_3=0$; here $a$ and $b$ are both positive. We assume that the waveguide is perfectly conducting. The waveguide is filled with some background isotropic homogeneous medium (such as air or vacuum) with electric permittivity $\epsilon_0$ and magnetic permeability $\mu_0$, where $\epsilon_0$ and $\mu_0$ are positive constants. We consider time-harmonic scattering at fixed frequency $\omega$. 

An electric point source $\vecE^i$ at $\by$ with polarization $\vecp$ satisfies the following Maxwell's equations
\begin{eqnarray}
-i k \epsilon_0 \vecE^i - \mcurl \vecH^i = \frac{1}{ik\mu_0} \vecp\,\delta(\by) \quad &\mbox{in}&\quad W, \label{forward source em 1}\\
i k \mu_0 \vecH^i + \mcurl \vecE^i = \boldsymbol{0} \quad &\mbox{in}&\quad W, \label{forward source em 2}\\
\nu \times \vecE^i = \boldsymbol{0} \quad &\mbox{on}&\quad \partial W,\label{forward source em 3}
\end{eqnarray}
where $\vecH^i$ is the corresponding magnetic field, $k:=\omega \sqrt{\epsilon_0\mu_0}$ denotes the background wavenumber, and $\nu$ denotes the unit outward normal to $\partial W$. The electric point source further satisfies a suitable radiation condition when $x_3 \to -\infty$. Such radiation condition will be discussed in details in Section \ref{pre 1}.

The scatterer is denoted by $D$. The scatterer is filled with an isotropic  material with electric permittivity $\widetilde{\epsilon}(\bx)$, constant magnetic permeability $\mu_0$, and electric conductivity $\widetilde{\sigma}(\bx)$. The relative electric permittivity $\epsilon(\bx)$ is given by  $\epsilon(\bx):=\frac{\widetilde{\epsilon}(\bx)}{\epsilon_0} + i \frac{\widetilde{\sigma}(\bx)}{\epsilon_0}$ in $D$ and $\epsilon(\bx):=1$ in $W \backslash \overline{D}$ respectively.

The total electric wave field $\vecE$ and total magnetic wave field $\vecH$ satisfy
 \begin{eqnarray}
-i k \epsilon \vecE - \mcurl \vecH = \frac{1}{ik\mu_0} \vecp\,\delta(\by) \quad &\mbox{in}&\quad W, \label{forward total em 1}\\
i k \mu_0 \vecH + \mcurl \vecE = \boldsymbol{0} \quad &\mbox{in}&\quad W, \label{forward total em 2}\\
\nu \times \vecE= \boldsymbol{0} \quad &\mbox{on}&\quad \partial W,\label{forward total em 3}
\end{eqnarray}
where the electric wave field again satisfies a suitable radiation condition when $x_3 \to -\infty$.

It is convenient to write down the Maxwell's equations in terms of the electric wave fields only. We can rewrite equations \eqref{forward source em 1} -- \eqref{forward source em 3} in terms of the electric point source as
\begin{eqnarray}
\mcurl^2 \vecE^i - k^2 \vecE^i =  \vecp\,\delta(\by) \quad &\mbox{in}&\quad W, \label{forward source e 1}\\
\nu \times \vecE^i = \boldsymbol{0} \quad &\mbox{on}&\quad \partial W.\label{forward source e 2}
\end{eqnarray}
Similarly, equations \eqref{forward total em 1} -- \eqref{forward total em 3} in terms of the electric scattered wave field $\vecE^s: = \vecE - \vecE^i$ read
\begin{eqnarray}
\mcurl^2 \vecE^s - k^2 \epsilon \vecE^s =  k^2 (\epsilon -1) \vecE^i \quad &\mbox{in}&\quad W, \label{forward scattered e 1}\\
\nu \times \vecE^s = \boldsymbol{0} \quad &\mbox{on}&\quad \partial W.\label{forward scattered e 2}
\end{eqnarray}
Equations \eqref{forward source e 1}--\eqref{forward source e 2} and \eqref{forward scattered e 1}--\eqref{forward scattered e 2} are further complimented by the radiation condition when $x_3 \to -\infty$.

We now introduce the inverse problem. Denote by $\vecE^i(\cdot;\by;\vecp)$ the electric point source at $\by$ with polarization $\vecp$ and $\vecE^s(\cdot;\by;\vecp)$ the corresponding electric scattered wave field satisfying \eqref{forward scattered e 1}--\eqref{forward scattered e 2} with $\vecE^i=\vecE^i(\cdot;\by;\vecp)$. Denote by $\Sigma_r:=\Sigma \times \{r\}$, the measurement surface away from the scatterer. The \textbf{inverse problem} is to determine the scatterer $D$ from $\{ \vecE^s(\bx;\by;\vecp)\}$ for all $\bx \in \Sigma_r$, $\by \in \Sigma_r$,  and all polarization $\vecp$.

The remaining of the paper is organized as follows. We first discuss the radiation condition in an electromagnetic waveguide and the corresponding dyadic Green function in Section \ref{pre 1}. We then discuss in Section \ref{pre 2} the forward scattering problem, and introduce the data operator $\mathcal{N}$ and its factorization $\overline{\mathcal{H}^* \overline{\mathcal{T}\mathcal{H} }}$. We use such factorization to design and analyze a sampling type method in Section \ref{Imaging function}. In particular, we show that the imaging function behaves like $\|\mathcal{H} \Psi(\cdot;\bz) \be_j\|_{\vecL^2(D)}$   for some ``test function" $\Psi(\cdot;\bz)\be_j$. We then show that $\|\mathcal{H} \Psi(\cdot;\bz) \be_j\|_{\vecL^2(D)}$ peaks for sampling points $\bz$ inside the scatterer, and so does the imaging function. We further give a modal representation of the imaging function. Numerical examples are provided in Section \ref{Numerics} to demonstrate the performance of the imaging method. We finally conclude our paper in Section \ref{conclusion}.
\section{Radiation Condition and Dyadic Green Function} \label{pre 1}
\subsection{Radiation Condition}
Let us introduce the propagating and evanescent modes in the { rectangular waveguide $W$. Recall that the cross-section is $\Sigma = (0,a)\times(0,b)$.} We denote by $\widehat{\bx}:=(x_1,x_2)$ the first two component of $\bx=(x_1,x_2,x_3)$, and $\partial_j$ the partial derivative respect to $x_j$.

{
Let $\{(u_m,\lambda_m)\}_{m=0}^\infty$ and  $\{(v_n,\mu_n)\}_{n=1}^\infty$ be defined by
\begin{equation*}
\Bigg\{
\begin{array}{c}
 u_m(\widehat{x}) := \cos(\frac{m_1 \pi x_1}{a}) \cos(\frac{m_2 \pi x_2}{b})  \\
 \\
\lambda_m^2 := (\frac{m_1 \pi}{a})^2 + (\frac{m_2 \pi}{b})^2
\end{array}, \quad \mbox{and} \quad 
\Bigg\{
\begin{array}{c}
 v_n(\widehat{x}) := \sin(\frac{n_1 \pi x_1}{a}) \sin(\frac{n_2 \pi x_2}{b})  \\
 \\
\mu_n^2 := (\frac{n_1 \pi}{a})^2 + (\frac{n_2 \pi}{b})^2
\end{array},
\end{equation*}
where we have associated each $(m_1,m_2)$ with a unique $m$, and each $(n_1,n_2)$ with a unique $n$, such that $0\le\lambda_{m} \le \lambda_{m+1}$ and $0<\mu_{n} \le \mu_{n+1}$.
}
We now define
\begin{equation} \label{pre modes}
\left\{
\begin{array}{c}
M_m(\bx) := \Bigg(
\begin{array}{c}
  \partial_2 u_m(\widehat{x}) \\
 -\partial_1 u_m(\widehat{x})   \\
    0
\end{array}
\Bigg ) e^{i h_m x_3}\\
P_n(\bx) := \frac{1}{k}\Bigg(
\begin{array}{c}
 i g_n \partial_1 v_n(\widehat{x}) \\
 i g_n \partial_2 v_n(\widehat{x})   \\
    0
\end{array}
\Bigg ) e^{i g_n x_3}
\\Q_n(\bx) := \frac{1}{k} \Bigg(
\begin{array}{c}
0 \\
0  \\
    \mu_n^2 v_n
\end{array}
\Bigg ) e^{i g_n x_3}
\end{array}
\right.,
\end{equation}
where  $h_m$ is defined by $h_m := \sqrt{k^2-\lambda_m^2}$ with branch cut in $\{z: \Im z \ge 0\}$ and $g_n$ is defined by
$g_n := \sqrt{k^2-\mu_n^2}$ with branch cut in $\{z: \Im z \ge 0\}$. 

When $g_m$ and $h_n$ are real-valued, $M_m(\bx^-)$, $P_n(\bx^-)$ and $Q_n(\bx^-)$ are propagating modes that propagates along the waveguide axis to $-\infty$, where $\bx^-:=(x_1,x_2,-x_3)$ for any $\bx \in \R^3$; for a fixed wavenumber, there are at most finitely many propagating modes. We remark that when $m=0$, $M_m$ vanishes. When $g_m$ and $h_n$ are imaginary-valued, $M_m(\bx^-)$, $P_n(\bx^-)$ and $Q_n(\bx^-)$ are evanescent modes that decay as $x_3 \to -\infty$. We assume that $k^2\not=\lambda_m^2$ and $k^2 \not=\mu_n^2$ for any $m$ and $n$.

Any solution to the Maxwell's equation for $x_3 \ll -1$ can be represented by the superposition of the propagating modes and evanescent modes. A direct calculation yields that the modes $P_n(\bx^-)$ and $-Q_n(\bx^-)$ must have the same coefficients. For a more detailed discussion, we refer to \cite{monk2019near,borcea2015imaging} and the reference therein.  We now introduce the following radiation condition.
\begin{definition} \label{ORC}
The electric wave field $\vecE$ is said to satisfy the outgoing radiation condition if, for $x_3 \ll -1$, $\vecE$ is superposition of  propagating modes and evanescent modes,
\begin{equation*}
\vecE(\bx) = \sum_{m=1}^\infty a_m M_m(\bx^-) + \sum_{n=1}^\infty b_n \big[P_n(\bx^-)-Q_n(\bx^-)\big],
\end{equation*}
with constants $a_m$ and $b_n$ determined by $\vecE$.
\end{definition}
\subsection{Dyadic Green Function}
We first introduce the electric dyadic Green function $\widetilde{\tG}_e(\bx;\by)$ for the full waveguide $\widetilde{W} = \Sigma \times (-\infty, \infty)$. In the full waveguide $\widetilde{W}$, $\widetilde{\tG}_e(\bx;\by)$ satisfies 
\begin{eqnarray*}
\mcurl_{\bx}^2 \widetilde{\tG}_e - k^2 \widetilde{\tG}_e =  \delta(\by) \boldsymbol{I} \quad &\mbox{in}&\quad \widetilde{W}, \label{dyadic full 1}\\
\nu \times \widetilde{\tG}_e = \boldsymbol{0} \quad &\mbox{on}&\quad \partial \widetilde{W}.\label{dyadic full 2}
\end{eqnarray*}
From \cite{chen2017direct}, we can directly obtain that
\begin{equation} \label{full G modal}
\widetilde{\tG}_e(\bx;\by) = 
\left\{
\begin{array}{cc}
\hspace{-2.9cm}\sum_{m=1}^\infty c_m M_m(\bx) M_m^T(\by^-) &  \\ + \sum_{n=1}^\infty d_n [ P_n(\bx) + Q_n(\bx)] [ P_n(\by^-) - Q_n(\by^-)]{^T},   &x_3 >y_3, \\
& \\
 \hspace{-2.9cm}\sum_{m=1}^\infty c_m M_m(\bx^-) M_m^T(\by) &\\+ \sum_{n=1}^\infty d_n [ P_n(\bx-) - Q_n(\bx^-)] [ P_n(\by) + Q_n(\by)]{^T},  &      x_3 <y_3,
\end{array}
\right.
\end{equation}
where $T$ denotes the standard transpose (no conjugate), and 
\begin{equation*}
c_m:= i\frac{1}{2h_m \lambda_m^2}, \qquad d_n:= -i\frac{1}{2g_n \mu_n^2}.
\end{equation*}
Now  the electric dyadic Green function in $W$ satisfies
\begin{eqnarray}
\mcurl_{\bx}^2 \tG_e - k^2 \tG_e =  \delta(\by) \boldsymbol{I} \quad &\mbox{in}&\quad W, \label{dyadic 1}\\
\nu \times \tG_e = \boldsymbol{0} \quad &\mbox{on}&\quad \partial W.\label{dyadic 2}
\end{eqnarray}
From the dyadic Green function given by \eqref{full G modal} in the full waveguide, we can directly obtain the following modal representation of $\tG_e$
\begin{equation} \label{G modal}
\tG_e(\bx;\by) = 
\left \{
\begin{array}{cc}

\begin{array}{cc}
\hspace{-2.0cm}\sum_{m=1}^\infty c_m  [M_m(\bx) -M_m(\bx^-)] M_m^T(\by^-) &\\ \hspace{-0.cm}+ \sum_{n=1}^\infty d_n \Big( [ P_n(\bx) - P_n(\bx^-)] &\\
+[ Q_n(\bx) + Q_n(\bx^-)] \Big) [ P_n(\by^-) - Q_n(\by^-)]^T,    ~~x_3 >y_3, &
\end{array} \\
& \\
\begin{array}{cc}
\hspace{-2.0cm}\sum_{m=1}^\infty c_m  M_m(\bx^-) [M_m^T(\by) -M_m^T(\by^-)]  &\\ \hspace{-0.5cm}+\sum_{n=1}^\infty d_n [ P_n(\bx^-) - Q_n(\bx^-)]    \Big( [ P_n(\by) - P_n(\by^-)]^T &\\
+ [ Q_n(\by) + Q_n(\by^-)]^T \Big),   ~~x_3 <y_3.&
\end{array}
\end{array}
\right.
\end{equation}
The above electric dyadic Green function satisfies the outgoing radiation condition in Definition \ref{ORC}. It is directly verified that the electric dyadic Green function satisfies the reciprocity relation
\begin{equation*}
\tG_e^T(\bx;\by) = \tG_e(\by;\bx).
\end{equation*}
We have immediately that the electric point source in \eqref{forward source e 1} -- \eqref{forward source e 2} is given by $\vecE^i (\cdot;\by;\vecp)= \tG_e(\cdot;\by) \vecp$.

\section{Forward Problem and Factorization of Operators} \label{pre 2}
\subsection{Forward Problem}
To begin with, we make more precise about the waveguide $W$ and scatterer $D$. Assume that the relative electric permittivity $\epsilon$  belongs to $L^\infty (W)$ and  is bounded below by some positive constant. Assume that $D$ is a bounded, open, Lipschitz domain.

We now introduce the following standard Sobolev spaces. For any bounded Lipschitz domain $\Omega \in \R^3$, we denote by $\vecL^2(\Omega) := (L^2(\Omega))^3$ and
\begin{eqnarray*}
\vecH(\mcurl, \Omega) := \{ \vecu \in \vecL^2(\Omega): \mcurl \vecu \in \vecL^2(\Omega) \}.
\end{eqnarray*}
Furthermore $\vecH_{loc}(\mcurl, W)$ denotes the corresponding local space for the unbounded waveguide $W$.

We look for an outgoing radiating solution $\vecE^s \in \vecH_{loc}(\mcurl, W)$ to the forward problem \eqref{forward scattered e 1} -- \eqref{forward scattered e 2}.
The well-posedness of the forward problem \eqref{forward scattered e 1} -- \eqref{forward scattered e 2} with $W$ replaced by the full waveguide $\widetilde{W}$ has been proved in \cite{monk2019near}. They proved that if $\Im \epsilon$ is bounded below in some open bounded subdomain of $D$ with non-zero measure, then the forward problem  is well-posed for any real valued wavenumber $k$; if $\Im \epsilon$ vanishes in $D$, then the forward problem is well-posed except for, at most, a discrete set of real $k$ values whose only possible accumulation point is $\infty$. Our half-waveguide scattering problem \eqref{forward scattered e 1} -- \eqref{forward scattered e 2} can be studied exactly in the same way. See also \cite{borcea2015imaging} on the well-posedness of the forward problem in a half-waveguide. We summarize the above argument as a lemma.

\begin{lemma}
The forward problem is well-posed except for, at most, a discrete set of real $k$ values whose only possible accumulation point is $\infty$.
\end{lemma}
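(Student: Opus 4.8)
The plan is to reduce the half-waveguide problem to the Fredholm framework used for the full waveguide in \cite{monk2019near}, the only structural differences being the terminating perfectly conducting wall at $x_3=0$ and the presence of a single radiating end $x_3\to-\infty$ (rather than two). First I would truncate the waveguide at a plane $\Sigma_{-L}:=\Sigma\times\{-L\}$ with $L$ large, and replace the outgoing radiation condition of Definition \ref{ORC} by a Dirichlet-to-Neumann operator $T_k$ acting on the tangential trace at $\Sigma_{-L}$. This operator is built explicitly from the modal expansion \eqref{pre modes}: each propagating mode contributes an imaginary (radiating) part, while each evanescent mode contributes a part with a definite sign. The scattered field then solves the equivalent variational problem on the bounded domain $W_L:=\Sigma\times(-L,0)$, posed in the subspace of $\vecH(\mcurl,W_L)$ with vanishing tangential trace on the lateral boundary and on the terminating wall $\Sigma_0$, with sesquilinear form $a_k(\vecE^s,\vecv)=\int_{W_L}\mcurl\vecE^s\cdot\overline{\mcurl\vecv}-k^2\epsilon\,\vecE^s\cdot\overline{\vecv}\,d\bx-\langle T_k\vecE^s,\vecv\rangle_{\Sigma_{-L}}$ and right-hand side $k^2(\epsilon-1)\vecE^i$.

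Next I would establish the Fredholm alternative for $a_k$ by exhibiting a G\r{a}rding inequality. The difficulty here is the infinite-dimensional kernel of $\mcurl$, so I would use a Helmholtz/Hodge decomposition of the test space together with the Maxwell compact embedding theorem (of Weber--Picard type) to show that the form splits as a coercive part plus a compact perturbation; the indefinite piece of $T_k$ coming from the finitely many propagating modes is finite rank, hence compact, while the evanescent part carries the sign required for coercivity. This yields that the solution operator has the form $I-K(k)$ with $K(k)$ compact.

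Then I would invoke the analytic Fredholm theorem. The quantities $h_m,g_n$ are analytic functions of $k$ away from the cutoff values $k^2=\lambda_m^2$ and $k^2=\mu_n^2$, which are excluded by assumption, so $K(k)$ depends analytically on $k$; the analytic Fredholm theorem then shows that either $I-K(k)$ is invertible for no $k$, or its inverse is meromorphic and invertible for all $k$ outside a discrete set whose only possible accumulation point is $\infty$.

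Finally I would rule out the first alternative and identify the exceptional set through uniqueness. Testing the homogeneous equation against $\overline{\vecE^s}$ on $W_L$ and taking the imaginary part, the contribution of $T_k$ reproduces the outgoing energy flux carried by the propagating modes, giving $k^2\int_D \Im\epsilon\,|\vecE^s|^2\,d\bx$ plus a nonnegative radiated term equal to zero. When $\Im\epsilon$ is bounded below on a subdomain of positive measure, this forces $\vecE^s=0$ there, and unique continuation for the curl-curl equation propagates $\vecE^s\equiv0$; since $I-K(k)$ is then injective and of index zero it is invertible for every real $k$, and the exceptional set is empty. When $\Im\epsilon$ vanishes in $D$, the flux identity only forces the propagating-mode coefficients to vanish, so any nontrivial $\vecE^s$ is a finite-energy evanescent solution, which can exist only at the poles of the meromorphic resolvent; these poles form exactly the claimed discrete set accumulating only at $\infty$. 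The main obstacle is the second step: controlling the gradient kernel of the curl-curl operator so as to recover a genuine G\r{a}rding inequality, which is precisely where the Maxwell compactness machinery and the careful sign analysis of the modal DtN map enter.
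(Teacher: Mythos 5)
Your outline is essentially correct, and it is the same route the paper relies on --- but note that the paper itself contains no self-contained proof of this lemma: it cites the full-waveguide analysis of \cite{monk2019near}, asserts that the half-waveguide (terminating) problem ``can be studied exactly in the same way,'' and points to \cite{borcea2015imaging} for that case. Your proposal reconstructs the argument behind those citations --- truncation of the waveguide, a modal Dirichlet-to-Neumann map, a G{\aa}rding-type inequality obtained from a Helmholtz decomposition plus Maxwell (Weber--Picard) compactness, analytic Fredholm theory in $k$, and the energy-flux/unique-continuation uniqueness step --- with the only structural modification in the half-waveguide setting being the perfectly conducting wall at $x_3=0$, which your trial space already incorporates; so it supplies exactly the detail the paper delegates to the references. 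One loose end you should patch: in the non-absorbing case ($\Im\epsilon \equiv 0$ in $D$), concluding that the resolvent is meromorphic --- rather than landing in the ``invertible for no $k$'' branch of the analytic Fredholm alternative --- requires exhibiting at least one $k$ in the connected complex domain at which $I-K(k)$ is invertible, and injectivity at real $k$ is precisely what may fail there (trapped modes), so your absorbing-case argument does not supply such a point. The standard fix is to take $k$ with $\Im(k^2)>0$, where the energy identity forces uniqueness and hence, by Fredholmness of index zero, invertibility.
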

We will always choose wavenumber $k$ such that the forward problem \eqref{forward scattered e 1} -- \eqref{forward scattered e 2} is well-posed throughout our context.

For later purposes, let $W_s = \Sigma \times (s,0)$ and $\Sigma_s=\Sigma \times \{s\}$, and we denote the vector fields that are tangential to $\Sigma_s$ by
\begin{equation*}
\vecL^2_t(\Sigma_s):=\{ \vecu \in \vecL^2(\Sigma_s): \vecu \cdot \nu=0 \},
\end{equation*}
where $\nu$ denotes the unit outward normal to $\Sigma_s$. We then have the standard $\vecH^{1/2}_t(\Sigma_s)$, which includes all vector fields in $\vecH^{1/2}(\Sigma_s)$ that are tangential to $\Sigma_s$. Let $\widetilde{\vecH}^{-1/2}_t(\Sigma_s)$ be the dual of  $\vecH^{1/2}_t(\Sigma_s)$. We define
\begin{eqnarray*}
\widetilde{\vecH}^{-1/2}(\mdiv, \Sigma_s):=\{ \vecf \in \widetilde{\vecH}^{-1/2}_t(\Sigma_s): \vecf=\sum_{m=1}^\infty a_m \nabla_{\Sigma} u_m + \sum_{n=1}^\infty b_n \smcurl v_n,\\
\sum_{m=1}^\infty |a_m|^2 |\lambda_m|^3 + \sum_{n=1}^\infty |b_n|^2 |\mu_n| < \infty \}.
\end{eqnarray*}
The dual space of $\widetilde{\vecH}^{-1/2}(\mdiv, \Sigma_s)$ is denoted by $\widetilde{\vecH}^{-1/2}(\mcurl, \Sigma_s)$. Therefore for any outgoing radiating solution $\vecw \in \vecH(\mcurl,W_s)$ to the forward problem \eqref{forward scattered e 1} -- \eqref{forward scattered e 2}, $ \vecw \times \nu|_{\Sigma_s} \in \widetilde{\vecH}^{-1/2}(\mdiv, \Sigma_s)$  and $ \big(\nu\times \mcurl\vecw \big) \times \nu|_{\Sigma_s} \in \widetilde{\vecH}^{-1/2}(\mcurl, \Sigma_s)$. We refer to \cite{monk2019near} for more details.
\begin{remark} \label{traces modal}
For any outgoing radiating solution $\vecw$ satisfying Definition \ref{ORC}, if its trace on $\Sigma_s$ (where $\Sigma_s$ is such that $D \subset W_s$) is given by
\begin{equation*}
\vecw \times \nu|_{\Sigma_s} = \sum_{m=1}^\infty a_m \nabla_\Sigma u_m(\widehat{\bx}) - \frac{1}{k}\sum_{n=1}^\infty i g_n b_n \smcurl v_n(\widehat{\bx})
\end{equation*}
for some constants $a_m$ and $b_n$, then we have directly that
\begin{equation*}
(\nu \times \mcurl \vecw) \times \nu|_{\Sigma_s} = \sum_{m=1}^\infty a_m (-ih_m) \nabla_\Sigma u_m(\widehat{\bx}) +  \frac{1}{k} \sum_{n=1}^\infty i g_n b_n (ig_n + \frac{-\mu_n^2}{ig_n} ) \smcurl v_n(\widehat{\bx}).
\end{equation*}
\end{remark}

\subsection{Factorization of Data Operator} \label{fac method}
Recall that $\vecE^i(\cdot;\by;\vecp)$ is electric point source at $\by$ with polarization $\vecp$ and $\vecE^s(\cdot;\by;\vecp)$ is the corresponding electric scattered wave field satisfying \eqref{forward scattered e 1}--\eqref{forward scattered e 2} with $\vecE^i=\vecE^i(\cdot;\by;\vecp)$.

We first introduce the data operator $\mathcal{N}: \vecL^2_t(\Sigma_r) \to \vecL^2_t(\Sigma_r)$ defined by
\begin{equation} \label{definition operator N}
\big( \mathcal{N} \vecg \big)(\bx):= \Big(\nu(\bx) \times  \int_{\Sigma_r} \vecE^s(\bx;\by;\vecg(\by)) ~d S_\by \Big) \Big|_{\Sigma_r}.
\end{equation}

In the following, we give a factorization of the data operator $\mathcal{N}$. To begin with, we define the bounded linear operator $\mathcal{H}: \vecL^2_t(\Sigma_r) \to \vecH_{inc} (D)$ by
\begin{equation} \label{definition operator H}
\mathcal{H} \vecg := \vecw^i|_D,
\end{equation}
where 
$\vecw^i(\bx):=\int_{\Sigma_r} \tG_e(\bx;\by) \vecg(\by) d S_\by$, $\bx \in W \backslash \Sigma_r$; and $\vecH_{inc} (D)$ is defined by
\begin{equation*}
\vecH_{inc} (D):=\{ \vecu \in \vecL^2(D): \mcurl^2 \vecu -k^2 \vecu=0 \mbox{ in } D \}.
\end{equation*}
{ We note that $\vecH_{inc} (D)$ is a closed subspace of $\vecL^2(D)$ and hence a Hilbert space. Therefore the adjoint of $\mathcal{H}$, given by $\mathcal{H}^*$, can be defined as an operator $\vecH_{inc} (D) \to \vecL^2_t(\Sigma_r)$; more precisely, $\mathcal{H}^*:  \vecH_{inc} (D) \to \vecL^2_t(\Sigma_r)$ is given by}
\begin{equation} \label{definition operator H*}
\big(\mathcal{H}^* \vecv \big) (\bx) :=  \Big(\nu(\bx) \times  \int_D \overline{\tG_e(\bx;\by)} \vecv(\by) ~d\by \Big) \times \nu(\bx) \Big|_{\Sigma_r},
\end{equation}
as we can directly verify this via
\begin{eqnarray*}
\langle \vecv, \mathcal{H} \vecg \rangle &=& \int_D \int_{\Sigma_r} \vecv(\bx)\cdot \big(  \overline{\tG_e(\bx;\by) \vecg(\by)} \big) d S_\by~ d \bx \\
&=&  \int_D \int_{\Sigma_r} \overline{\vecg(\by)}\cdot \big(  \overline{\tG_e(\bx;\by)}^T \vecv(\bx) \big) d S_\by ~d \bx \\
&=&  \int_{\Sigma_r} \int_D \overline{\vecg(\by)}\cdot \big(  \overline{\tG_e(\by;\bx)} \vecv(\bx) \big) d \bx ~d S_\by = \langle \mathcal{H}^* \vecv,\vecg \rangle,
\end{eqnarray*}
where we have applied the reciprocity relation of $\tG_e(\bx;\by)$.

Define the bounded linear operator $\mathcal{T}: \vecH_{inc}(D) \to \vecL^2(D)$ by
\begin{equation} \label{definition operator T}
\mathcal{T} \vecw^i:= k^2(\epsilon-1) (\vecw^i+\vecw^s),
\end{equation}
where $\vecw^s$ is the unique solution to \eqref{forward scattered e 1} -- \eqref{forward scattered e 2} with $\vecE^i$ replaced by $\vecw^i$. We are now ready to state the following theorem. 
\begin{theorem}\label{data operator factorization}
The data operator $\mathcal{N}$ defined via \eqref{definition operator N} can be factorized by
\begin{equation} 
\mathcal{N} \vecg \times \nu = \overline{\mathcal{H}^* \overline{\mathcal{TH} \vecg}},
\end{equation}
for any $\vecg \in \vecL^2_t(\Sigma_r)$. Here $\mathcal{H}$, $\mathcal{H}^*$, and $\mathcal{T}$ are defined via \eqref{definition operator H},  \eqref{definition operator H*}, and  \eqref{definition operator T} respectively.
\end{theorem}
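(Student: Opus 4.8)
The plan is to show that both sides of the claimed identity coincide with the tangential trace on $\Sigma_r$ of one and the same physical scattered field $\vecw^s$, namely the one attached to the incident field $\vecw^i=\mathcal{H}\vecg$. Concretely, I would first rewrite the left-hand side as $\mathcal{N}\vecg\times\nu=(\nu\times\vecw^s)\times\nu|_{\Sigma_r}$, then produce a volume-potential representation $\vecw^s(\bx)=\int_D\tG_e(\bx;\by)(\mathcal{T}\mathcal{H}\vecg)(\by)\,d\by$, and finally verify that unwinding the two complex conjugations in $\overline{\mathcal{H}^*\overline{\mathcal{T}\mathcal{H}\vecg}}$ reproduces precisely this tangential trace.

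First I would identify the left-hand side by superposition. Since the point source admits the representation $\vecE^i(\cdot;\by;\vecp)=\tG_e(\cdot;\by)\vecp$ from Section \ref{pre 1}, the incident field built from the density $\vecg$ is $\vecw^i(\bx)=\int_{\Sigma_r}\tG_e(\bx;\by)\vecg(\by)\,dS_\by$, which is exactly the field whose restriction to $D$ defines $\mathcal{H}\vecg$. By linearity of the forward map from incident to scattered field, together with the uniqueness guaranteed by the well-posedness Lemma, the superposed scattered data equals the scattered field $\vecw^s$ in the definition of $\mathcal{T}$; that is, $\int_{\Sigma_r}\vecE^s(\bx;\by;\vecg(\by))\,dS_\by=\vecw^s(\bx)$. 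Applying $\nu\times(\cdot)$ and then $(\cdot)\times\nu$ gives $\mathcal{N}\vecg\times\nu=(\nu\times\vecw^s)\times\nu|_{\Sigma_r}$.

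Next I would obtain the integral representation of $\vecw^s$. Rewriting \eqref{forward scattered e 1} as $\mcurl^2\vecw^s-k^2\vecw^s=k^2(\epsilon-1)(\vecw^i+\vecw^s)=\mathcal{T}\mathcal{H}\vecg$, I observe that the right-hand side is supported in $D$ because $\epsilon-1$ vanishes in $W\backslash\overline{D}$, and that it lies in $\vecL^2(D)$. Since $\vecw^s$ satisfies the perfectly conducting boundary condition and the outgoing radiation condition, while $\tG_e(\cdot;\by)$ is precisely the Green function solving \eqref{dyadic 1}--\eqref{dyadic 2} with the same boundary and radiation conditions, the representation formula yields $\vecw^s(\bx)=\int_D\tG_e(\bx;\by)(\mathcal{T}\mathcal{H}\vecg)(\by)\,d\by$.

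Finally I would unwind the right-hand side. Writing $\vecv:=\mathcal{T}\mathcal{H}\vecg$ and using that $\nu$ is real and that conjugation commutes with the $\by$-integral, the explicit formula \eqref{definition operator H*} gives $\mathcal{H}^*\overline{\vecv}=\overline{(\nu\times\int_D\tG_e(\cdot;\by)\vecv(\by)\,d\by)\times\nu}\,|_{\Sigma_r}$; conjugating once more and inserting the representation from the previous step produces $\overline{\mathcal{H}^*\overline{\mathcal{T}\mathcal{H}\vecg}}=(\nu\times\vecw^s)\times\nu|_{\Sigma_r}$, which matches the left-hand side. The main obstacle is the rigorous justification of the volume-potential representation in the third step: one must verify that $\tG_e$ carries the correct radiation behavior so that the representation holds in the unbounded half-waveguide, and invoke uniqueness to exclude any homogeneous solution (e.g. spurious modal contributions) that could be added to $\vecw^s$. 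A secondary technical point is that \eqref{definition operator H*}, although stated on $\vecH_{inc}(D)$, must be applied to the genuinely $\vecL^2(D)$ source $\mathcal{T}\mathcal{H}\vecg$, which is legitimate because the integral formula extends continuously to all of $\vecL^2(D)$. The complex-conjugate bookkeeping itself is routine, but it should be carried out carefully, since the two conjugations in the statement exist precisely to cancel the conjugate appearing in $\mathcal{H}^*$.
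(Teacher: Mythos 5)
Your proposal is correct and follows essentially the same route as the paper's own proof: identify $\mathcal{N}\vecg\times\nu$ with the tangential trace of the superposed scattered field $\vecw^s$, represent $\vecw^s$ via the Lippmann--Schwinger volume potential $\int_D\tG_e(\cdot;\by)\,k^2(\epsilon-1)(\vecw^i+\vecw^s)(\by)\,d\by$, and unwind the two conjugations against the conjugate in the definition of $\mathcal{H}^*$. If anything, you are slightly more explicit than the paper about why the volume-potential representation is legitimate in the unbounded half-waveguide (radiation condition, uniqueness) and about applying the formula for $\mathcal{H}^*$ to an $\vecL^2(D)$ source, both of which the paper passes over in silence.
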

{
\begin{proof}
For any $\vecg \in \vecL^2_t(\Sigma_r)$, we get from the definition of $\mathcal{H}$ in \eqref{definition operator H} that $\mathcal{H}\vecg = \vecw^i$, where $\vecw^i(\bx):=\int_{\Sigma_r} \tG_e(\bx;\by) \vecg(\by) d S_\by$, $\bx \in W \backslash \Sigma_r$. Denote by $\vecw^s$ the unique solution to \eqref{forward scattered e 1} -- \eqref{forward scattered e 2} with $\vecE^i$ replaced by $\vecw^i$. Note that $\vecE^i(\bx;\by;\vecg(\by))=\tG_e(\bx;\by) \vecg(\by)$, then $\vecw^i$ can be written as $\vecw^i(\bx) = \int_{\Sigma_r} \vecE^i(\bx;\by;\vecg(\by))  d S_\by$,  therefore it follows from superposition that
\begin{eqnarray} \label{factorization proof w^s}
\vecw^s(\bx) = \int_{\Sigma_r} \vecE^s(\bx;\by;\vecg(\by))  d S_\by
\end{eqnarray}
since $\vecE^s(\cdot;\by;\vecp)$ is the corresponding electric scattered wave field satisfying \eqref{forward scattered e 1}--\eqref{forward scattered e 2} with $\vecE^i=\vecE^i(\cdot;\by;\vecp)$. 

From the definition of $\mathcal{N}$ in \eqref{definition operator N} and the expression of $\vecw^s$ in \eqref{factorization proof w^s}, to prove our theorem, it is sufficient to show that 
\begin{equation*}
(\nu\times\vecw^s) \times \nu|_{\Sigma_r} = \overline{\mathcal{H}^* \overline{\mathcal{T} \vecw^i}}.
\end{equation*}
From the definition of $\mathcal{T}$ in \eqref{definition operator T}, it is therefore sufficient to show that 
\begin{equation*}
(\nu\times\vecw^s) \times \nu|_{\Sigma_r} = \overline{\mathcal{H}^* \overline{k^2(\epsilon-1) (\vecw^i+\vecw^s)}}.
\end{equation*}
From the definition of $\mathcal{H}^*$ in \eqref{definition operator H*}, it is then sufficient to show that 
\begin{eqnarray}
\vecw^s&=& k^2 \overline{ \int_D \overline{\tG_e(\cdot;\by)} \overline{(\epsilon(\by)-1) (\vecw^i(\by)+\vecw^s(\by))} } ~d\by \\
&=& k^2 \int_D \tG_e(\cdot;\by) (\epsilon(\by)-1) (\vecw^i(\by)+\vecw^s(\by)) ~d\by. \label{factorization proof LS eqn}
\end{eqnarray}
This follows from the fact that $\vecw^s$ is the unique solution to \eqref{forward scattered e 1} -- \eqref{forward scattered e 2} with $\vecE^i$ replaced by $\vecw^i$. Indeed
\begin{eqnarray*}
\mcurl^2 \vecw^s - k^2 \epsilon \vecw^s =  k^2 (\epsilon -1) \vecw^i \quad &\mbox{in}&\quad W
\end{eqnarray*}
is equivalent to 
\begin{eqnarray*}
\mcurl^2 \vecw^s - k^2  \vecw^s =  k^2 (\epsilon -1) (\vecw^i+\vecw^s) \quad &\mbox{in}&\quad W,
\end{eqnarray*}
and therefore $\vecw^s$ can be represented using Lippmann-Schwinger equation as \eqref{factorization proof LS eqn}. This completes the proof.
\end{proof}
}

Alternatively, we can factorize the operator $\mathcal{N}$ in the standard way $\mathcal{N} = \mathcal{S} \mathcal{H}^*\mathcal{T}\mathcal{H}$ with the help of a certain ingoing to outgoing operator $\mathcal{S}$, see for instance the acoustic case in \cite{borcea2019factorization}. The data operator, together with its factorization, { may be explored} in the linear sampling method to image the scatterer by solving a linear integral equation. {For instance, in the full electromagnetic waveguide case \cite{monk2019near}}, the linear integral equation takes the form of $\mathcal{N} \vecg_\bz = \tG_e(\cdot,\bz) \vecq$, where $\tG_e(\cdot,\bz)$ is the electric dyadic Green function at the sampling point $\bz$ and $\vecq$ is some polarization; the solutions $\vecg_\bz$ display different generic properties for sampling points inside and outside the scatterer, and therefore may allow us to design an imaging method to determine the scatterer. To solve the linear integral equation, an ill-posed problem, one needs to apply certain regularization techniques \cite{monk2019near,colton2003linear,cakoni2011linear}. For the full Maxwell's equations, the regularization may burn some computations. 
An alternative way, in contrast to solving a linear integral equation, is to make use of the data operator directly. 
This motivates us to design a robust imaging method  by integrating the measurements and a ``test function" over the measurement surface in the next section.

\section{Imaging Function} \label{Imaging function}
In this section, we make use of the factorization in Section \ref{fac method} to propose a sampling type method. The outline of this section is as follows. We first show that our imaging function behaves like $\sum_{j=1}^3\|\mathcal{H} \Psi(\cdot;\bz) \be_j\|_{\vecL^2(D)}$ for some given ``test function" $\Psi(\cdot;\bz)\be_j$. The analysis of the imaging function is then demonstrated by the behavior of $\mathcal{H} \Psi(\cdot;\bz) \be_j$, proved to be similar to the first derivative of the Green function.

To begin with, let $\langle \cdot, \cdot \rangle$ be the $\vecL^2(\Sigma_r)$ inner product, and let $(\be_1,\be_2,\be_3)$ be the identity matrix. The imaging function is designed by
\begin{equation} \label{imaging function Iz}
I(\bz):=\sum_{j=1}^3 I_j(\bz), \mbox{ with } I_j(\bz):= \big| \langle \overline{\mathcal{N} \Psi (\cdot;\bz) \be_j} \times \nu,  \Psi (\cdot;\bz) \be_j \rangle \big|,
\end{equation}
here the tensor-valued function $\Psi(\by;\bz)$ is given by
\begin{eqnarray} \label{test function Psi}
\Psi(\by;\bz) &:=& \sum_{m=1}^M (-i) \frac{h_{m}}{2 \lambda_{m}^2} \overline{M}_{m} (\by^-) [ \overline{M}_{m}(\bz) - \overline{M}_{m}(\bz^-) ]^T \nonumber \\
&& \hspace{-1cm}+ \sum_{n=1}^N i  \frac{k^2}{2 \mu_{n}^2 g_n}  \overline{P}_{n} (\by^-)   \big( [ \overline{P}_{n}(\bz) - \overline{P}_{n}(\bz^-) ]^T  + [ \overline{Q}_{n}(\bz) + \overline{Q}_{n}(\bz^-) ]^T  \big),
\end{eqnarray}
where $M$ and $N$ are the indices such that $\lambda_{M}<k<\lambda_{M+1}$ and $\mu_{N}<k<\mu_{N+1}$ respectively.
\subsection{Analysis of Imaging Function} \label{Iz to Hpsi z}
Let $\langle \cdot,\cdot \rangle_{\vecL^2(D)}$ be the $\vecL^2(D)$ inner product. We first show the following lemma.
\begin{lemma} \label{lemma I_j reformulation}
$I_j(\bz) = |\langle \mathcal{T H} \Psi(\cdot;\bz) \be_j,   \mathcal{H} \Psi(\cdot;\bz) \be_j\rangle_{\vecL^2(D)}|$ for any $j=1,2,3$.
\end{lemma}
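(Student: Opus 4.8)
The plan is to combine the factorization of $\mathcal{N}$ from Theorem \ref{data operator factorization} with the adjoint identity for $\mathcal{H}^*$, and then to exploit a reality property of the field $\mathcal{H}\Psi(\cdot;\bz)\be_j$ that is engineered into the construction \eqref{test function Psi} of the test function. First I would rewrite the argument of $I_j$. Since $\nu$ is real-valued, conjugation commutes with the tangential cross product, so $\overline{\mathcal{N}\Psi(\cdot;\bz)\be_j}\times\nu = \overline{\mathcal{N}\Psi(\cdot;\bz)\be_j\times\nu}$. Applying Theorem \ref{data operator factorization} with $\vecg=\Psi(\cdot;\bz)\be_j$ and taking the outer conjugate yields
\[
\overline{\mathcal{N}\Psi(\cdot;\bz)\be_j}\times\nu = \overline{\,\overline{\mathcal{H}^*\overline{\mathcal{TH}\Psi(\cdot;\bz)\be_j}}\,} = \mathcal{H}^*\overline{\mathcal{TH}\Psi(\cdot;\bz)\be_j},
\]
which is precisely why the definition \eqref{imaging function Iz} carries a conjugate on $\mathcal{N}$. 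Hence $I_j(\bz)=\big|\langle\mathcal{H}^*\overline{\mathcal{TH}\Psi(\cdot;\bz)\be_j},\ \Psi(\cdot;\bz)\be_j\rangle\big|$.

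Next I would move $\mathcal{H}^*$ across the $\vecL^2(\Sigma_r)$ inner product. The identity $\langle\mathcal{H}^*\vecv,\vecg\rangle=\langle\vecv,\mathcal{H}\vecg\rangle_{\vecL^2(D)}$ was verified before Theorem \ref{data operator factorization}; that computation uses only the reciprocity of $\tG_e$ and not the membership $\vecv\in\vecH_{inc}(D)$, so it applies to the integral operator defined by \eqref{definition operator H*} for any $\vecv\in\vecL^2(D)$, in particular for $\vecv=\overline{\mathcal{TH}\Psi(\cdot;\bz)\be_j}$ (exactly the extension already used in the factorization). This gives
\[
I_j(\bz)=\big|\langle\overline{\mathcal{TH}\Psi(\cdot;\bz)\be_j},\ \mathcal{H}\Psi(\cdot;\bz)\be_j\rangle_{\vecL^2(D)}\big|.
\]
What remains is to remove the bar on $\mathcal{TH}\Psi(\cdot;\bz)\be_j$. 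Writing $u=\mathcal{TH}\Psi(\cdot;\bz)\be_j$ and $v=\mathcal{H}\Psi(\cdot;\bz)\be_j$, the two candidate inner products are $\int_D\overline u\cdot\overline v$ and $\int_D u\cdot\overline v$, which in general have different moduli; they coincide exactly when $v$ is real-valued, since then $\langle\overline u,v\rangle=\overline{\langle u,v\rangle}$ and the absolute values agree.

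The crux, and the step I expect to be the main obstacle, is therefore to prove that $\mathcal{H}\Psi(\cdot;\bz)\be_j=\int_{\Sigma_r}\tG_e(\cdot;\by)\Psi(\by;\bz)\be_j\,dS_\by$ is real-valued on $D$. I would insert the modal representation \eqref{G modal} of $\tG_e$ on the branch $x_3>y_3$ (valid because $\Sigma_r$ is separated from $D$ towards $-\infty$) and integrate term by term against \eqref{test function Psi} over $\Sigma_r$. Orthogonality of $\{u_m\}$ and $\{v_n\}$ over $\Sigma$ collapses the double sum to the diagonal and annihilates all non-propagating contributions, while the phases $e^{\pm i h_m r}$, $e^{\pm i g_n r}$ cancel because the surviving modes are propagating (so $h_m,g_n\in\R$). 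The coefficients in \eqref{test function Psi} are tuned so that, after pairing with $c_m$, $d_n$ and the mode norms, each scalar prefactor reduces to a real positive number. Moreover each surviving dyad is real: for the $M_m$ terms both factors $[M_m(\bx)-M_m(\bx^-)]$ and $[\overline M_m(\bz)-\overline M_m(\bz^-)]$ are a real profile times a purely imaginary $2i\sin(h_m x_3)$ (respectively its conjugate at $\bz$), whose product is real; for the $P_n,Q_n$ terms the analogous factors are themselves real (the $2i\sin(g_n x_3)$ combines with the $ig_n$ inside $P_n$, while $Q_n$ produces $2\cos(g_n x_3)$). Summing, $\mathcal{H}\Psi(\cdot;\bz)\be_j$ is real on $D$, which closes the argument. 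This explicit evaluation is the same computation that underlies the subsequent identification of $\mathcal{H}\Psi(\cdot;\bz)\be_j$ with a first derivative of the dyadic Green function, so it may alternatively be quoted from that analysis rather than repeated here.
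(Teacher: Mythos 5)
Your proof is correct and follows essentially the same route as the paper's: both pass from the factorization $\mathcal{N}\vecg\times\nu=\overline{\mathcal{H}^*\overline{\mathcal{TH}\vecg}}$ through the adjoint identity for $\mathcal{H}^*$, reduce the lemma to showing that $\mathcal{H}\Psi(\cdot;\bz)\be_j$ is real-valued on $D$, and verify that reality by the same modal expansion and orthogonality computation that produces \eqref{HPsi modal}. Your added observation that the adjoint identity applies to arbitrary $\vecL^2(D)$ densities (not only elements of $\vecH_{inc}(D)$) is a point of rigor the paper leaves implicit, but it does not alter the argument.
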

\begin{proof}
From the factorization of $\mathcal{N}$ in Lemma \ref{data operator factorization}, we have that
\begin{eqnarray*}
I_j(\bz) &=& |\langle \mathcal{H}^* \overline{\mathcal{T H} \Psi(\cdot;\bz) \be_j},    \Psi(\cdot;\bz) \be_j \rangle_{\vecL^2(D)}| = |\langle \overline{\mathcal{T H} \Psi(\cdot;\bz) \be_j},   \mathcal{H} \Psi(\cdot;\bz) \be_j\rangle_{\vecL^2(D)}| \\
&=& |\langle \mathcal{T H} \Psi(\cdot;\bz) \be_j,   \overline{\mathcal{H} \Psi(\cdot;\bz) \be_j} \rangle_{\vecL^2(D)}|.
\end{eqnarray*}
To prove the lemma, it is sufficient to show that 
\begin{equation} \label{lemma HPsi real valued}
\overline{\mathcal{H} \Psi(\bx;\bz) \be_j}  = \mathcal{H} \Psi(\bx;\bz) \be_j.
\end{equation}
Indeed, we can obtain from the explicit expression of $\Psi(\by;\bz)$ in \eqref{test function Psi} and the modal representation of $\tG_e(\bx;\by)$ in \eqref{G modal} that
\begin{eqnarray*}
 &&\mathcal{H} \Psi(\bx;\bz) \be_j \\&=& \sum_{m=1}^M\sum_{m'=1}^M c_m(-i) \frac{h_{m'}}{2 \lambda_{m'}^2}[ M_m(\bx) - M_m(\bx^-) ]   [ \overline{M}_{m'}(\bz) - \overline{M}_{m'}(\bz^-) ]^T\be_j  \\
&& \hspace{1.5cm} \cdot \langle M_m(\by^-), M_{m'}(\by^-) \rangle \nonumber \\
&+&  \sum_{n'=1}^N\sum_{n=1}^N d_n i  \frac{k^2}{2 \mu_{n'}^2 g_{n'}}  \Big( [ P_n(\bx) - P_n(\bx^-)] \\
&&+ [ Q_n(\bx) + Q_n(\bx^-)] \Big) \Big( [ \overline{P}_{n'}(\bz) - \overline{P}_{n'}(\bz^-) ]^T + [ \overline{Q}_{n'}(\bz) + \overline{Q}_{n'}(\bz^-) ]^T \Big) \be_j \\
&&\hspace{1.5cm} \cdot \langle \overline{P}_{n} (\by^-) - \overline{Q}_{n} (\by^-), \overline{P}_{n'} (\by^-) \rangle,
\end{eqnarray*}
where we have applied that $M_m$ and $P_n-Q_n$ are orthogonal. From \eqref{pre modes}, we have that
$$
\langle M_m(\by^-), M_{m'}(\by^-) \rangle = \lambda_m^2\delta_{mm'}, \quad \langle \overline{P}_{n} (\by^-) - \overline{Q}_{n} (\by^-), \overline{P}_{n'} (\by^-)  \rangle = \frac{\mu_n^2 g_n^2}{k^2} \delta_{nn'},
$$
and we can then derive that
\begin{eqnarray} \label{HPsi modal}
 \mathcal{H} \Psi(\bx;\bz) \be_j &=& \sum_{m=1}^M  (-i) \frac{c_m h_{m}}{2}[ M_m(\bx) - M_m(\bx^-) ]   [ \overline{M}_{m}(\bz) - \overline{M}_{m}(\bz^-) ]^T\be_j   \nonumber \\
&& \hspace{-2.5cm}+  \sum_{n=1}^N i   \frac{d_n g_{n}}{2}  \Big( [ P_n(\bx) - P_n(\bx^-)] + [ Q_n(\bx) + Q_n(\bx^-)] \Big)  \Big( [ \overline{P}_{n}(\bz) - \overline{P}_{n}(\bz^-) ]^T \nonumber \\
&&+ [ \overline{Q}_{n}(\bz) + \overline{Q}_{n}(\bz^-) ]^T \Big) \be_j.
\end{eqnarray}
From the explicit expressions in \eqref{pre modes}, it is directly verified that the quantity on the right hand side is real-valued. This proves equation \eqref{lemma HPsi real valued} and completes the proof.
\end{proof}

From Lemma \ref{lemma I_j reformulation}, we can directly obtain the following theorem.
\begin{theorem} \label{resolution I_j(z)}
If the operator $\mathcal{T}$ is coercive, then
\begin{equation}
c_1 \|  \mathcal{H} \Psi(\cdot;\bz) \be_j\|^2_{\vecL^2(D)} \le I_j(\bz) \le c_2  \|  \mathcal{H} \Psi(\cdot;\bz) \be_j\|^2_{\vecL^2(D)}
\end{equation}
for some constants $c_1$ and $c_2$ independent of $\bz$.
\end{theorem}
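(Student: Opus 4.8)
The plan is to obtain the two-sided estimate directly from the identity established in Lemma~\ref{lemma I_j reformulation}, using only the boundedness of $\mathcal{T}$ for the right-hand inequality and its assumed coercivity for the left-hand one. To this end I would write $\vecw := \mathcal{H}\Psi(\cdot;\bz)\be_j$, which belongs to $\vecH_{inc}(D)$ by the definition of $\mathcal{H}$ in \eqref{definition operator H}; with this abbreviation Lemma~\ref{lemma I_j reformulation} states simply that $I_j(\bz) = |\langle \mathcal{T}\vecw, \vecw\rangle_{\vecL^2(D)}|$, so the whole theorem reduces to a statement about the quadratic form of $\mathcal{T}$ evaluated at $\vecw$.

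For the upper bound I would combine the Cauchy--Schwarz inequality with the boundedness of the linear operator $\mathcal{T}:\vecH_{inc}(D)\to\vecL^2(D)$ (asserted just before \eqref{definition operator T}), which gives
\begin{equation*}
I_j(\bz) = |\langle \mathcal{T}\vecw, \vecw\rangle_{\vecL^2(D)}| \le \|\mathcal{T}\vecw\|_{\vecL^2(D)}\,\|\vecw\|_{\vecL^2(D)} \le \|\mathcal{T}\|\,\|\vecw\|^2_{\vecL^2(D)},
\end{equation*}
so one may take $c_2 := \|\mathcal{T}\|$. For the lower bound I would invoke the hypothesis that $\mathcal{T}$ is coercive, interpreted as the existence of a constant $c_1>0$ with $|\langle \mathcal{T}\vecu,\vecu\rangle_{\vecL^2(D)}| \ge c_1\|\vecu\|^2_{\vecL^2(D)}$ for all $\vecu\in\vecH_{inc}(D)$; applying this with $\vecu=\vecw$ produces $I_j(\bz) \ge c_1\|\vecw\|^2_{\vecL^2(D)}$ at once.

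The point deserving emphasis is the claimed independence of $c_1$ and $c_2$ from the sampling point $\bz$. Both constants, the coercivity constant and the operator norm $\|\mathcal{T}\|$, are intrinsic to $\mathcal{T}$ and make no reference to the particular argument $\vecw$; since the only $\bz$-dependence of $I_j(\bz)$ enters through $\vecw = \mathcal{H}\Psi(\cdot;\bz)\be_j$, the bounds hold with $\bz$-independent constants, exactly as stated. Because all the substantive work has already been absorbed into Lemma~\ref{lemma I_j reformulation}, I do not anticipate a genuine obstacle; the only care needed is to pin down the precise meaning of ``coercive'' as the uniform lower bound above on the closed subspace $\vecH_{inc}(D)$, and to note that it indeed applies to $\vecw$ because $\vecw$ lies in that subspace.
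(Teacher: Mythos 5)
Your proposal is correct and follows essentially the same route as the paper: the paper derives the theorem directly from Lemma~\ref{lemma I_j reformulation}, with the upper bound coming from the boundedness of $\mathcal{T}:\vecH_{inc}(D)\to\vecL^2(D)$ via Cauchy--Schwarz and the lower bound from the coercivity hypothesis $|\langle \mathcal{T}\vecu,\vecu\rangle_{\vecL^2(D)}|\ge c_1\|\vecu\|^2_{\vecL^2(D)}$ on $\vecH_{inc}(D)$, exactly as you spell out. Your added remarks, that $\mathcal{H}\Psi(\cdot;\bz)\be_j$ lies in $\vecH_{inc}(D)$ and that both constants are intrinsic to $\mathcal{T}$ and hence independent of $\bz$, are precisely the details the paper leaves implicit in the phrase ``we can directly obtain.''
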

\begin{remark}
We observe from Theorem \ref{resolution I_j(z)} that $I_j(\bz)$ behaves qualitatively as $\|  \mathcal{H} \Psi(\cdot;\bz) \be_j\|^2_{\vecL^2(D)}$. In the following, we will show that  $|\mathcal{H} \Psi (\bx_*;\bz) \be_j|$ peaks when $\bz$ coincide with $\bx_*$. Thus we can conclude that $\|  \mathcal{H} \Psi(\cdot;\bz) \be_j\|^2_{\vecL^2(D)}$ is expected to peak in the scatterer $D$, and so is  $I_j(\bz)$.
\end{remark}
    \begin{figure}[ht!]
\includegraphics[width=0.36\linewidth]{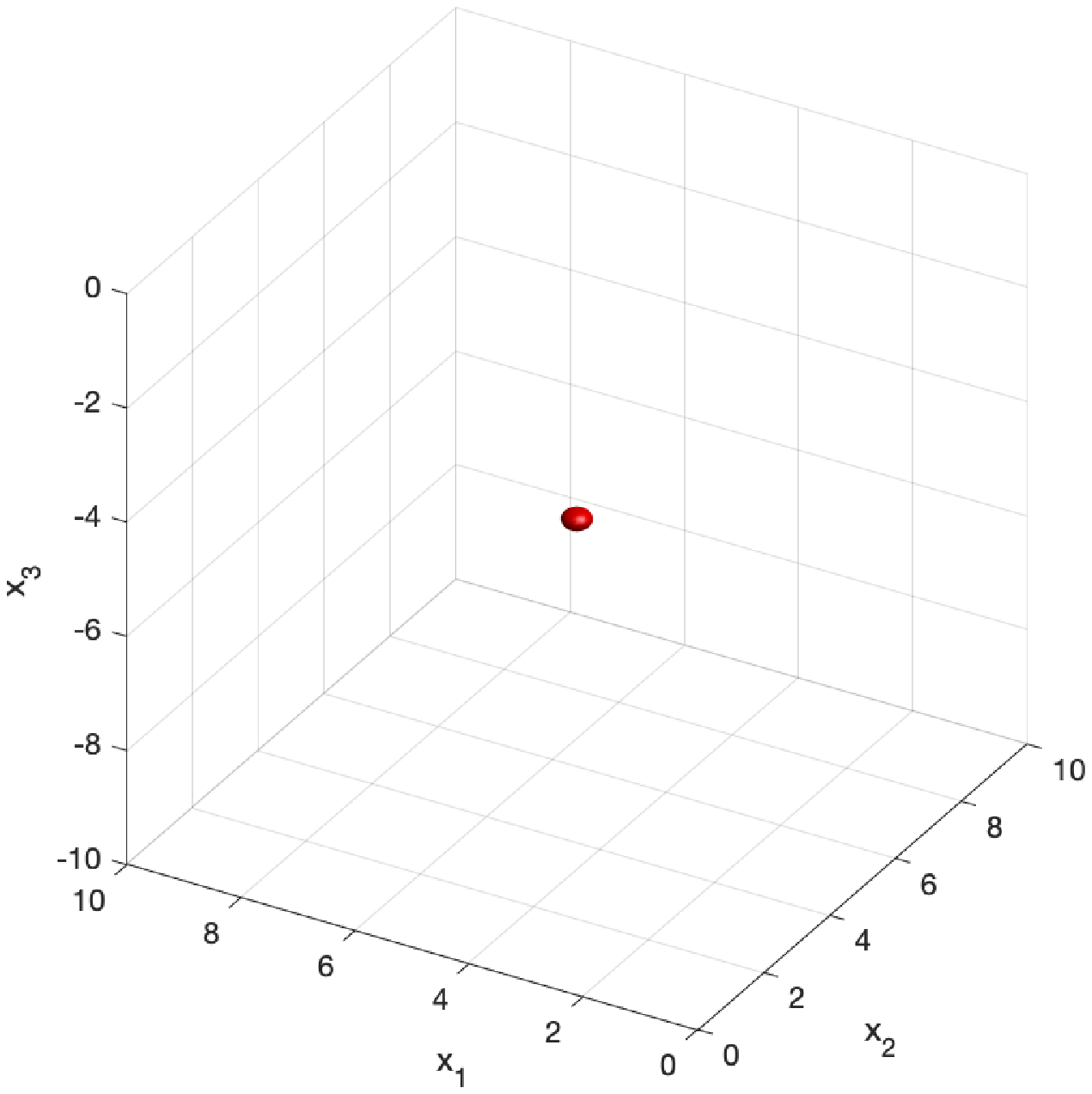}\hspace{0.1cm}\includegraphics[width=0.36\linewidth]{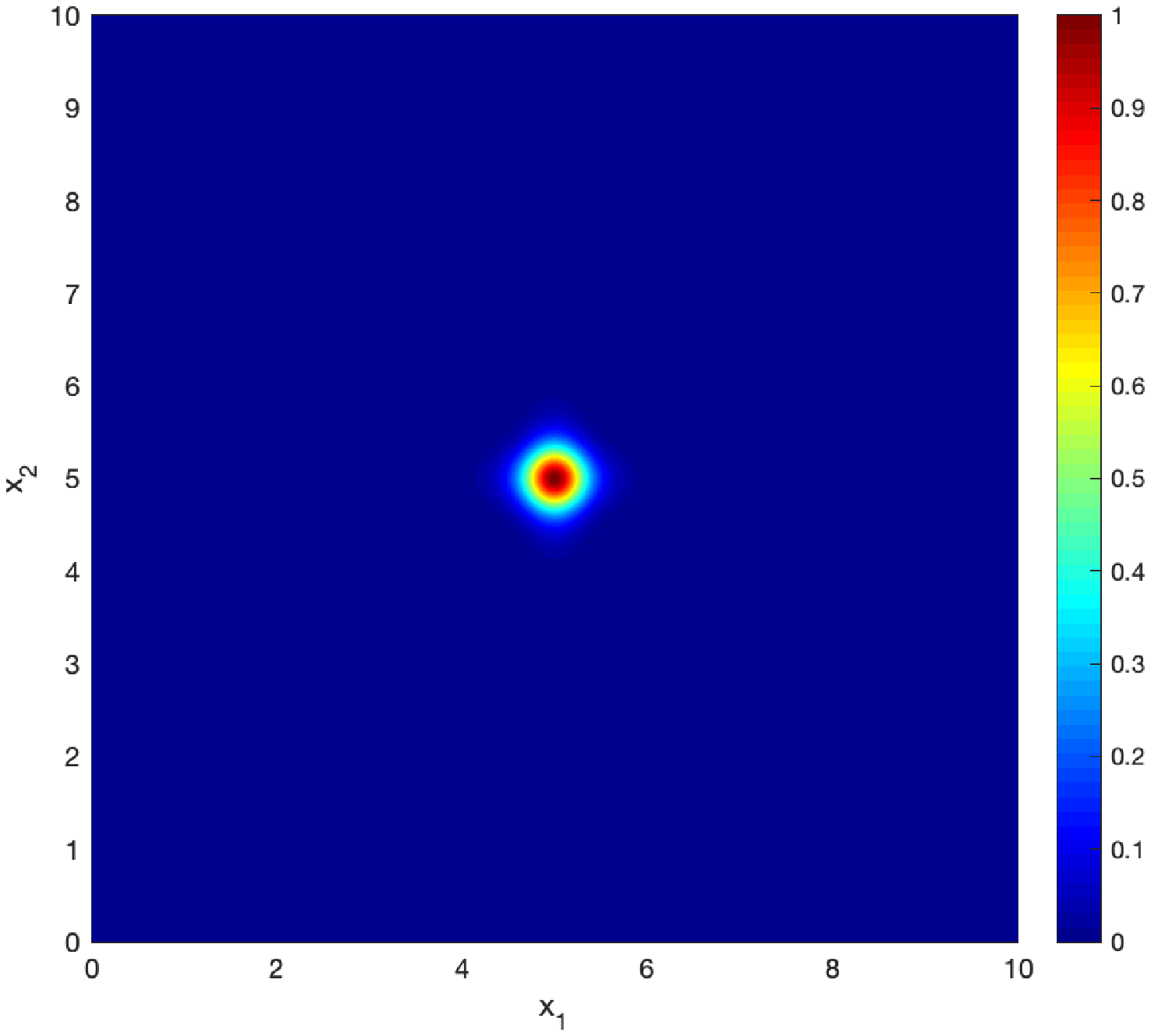}
\includegraphics[width=0.36\linewidth]{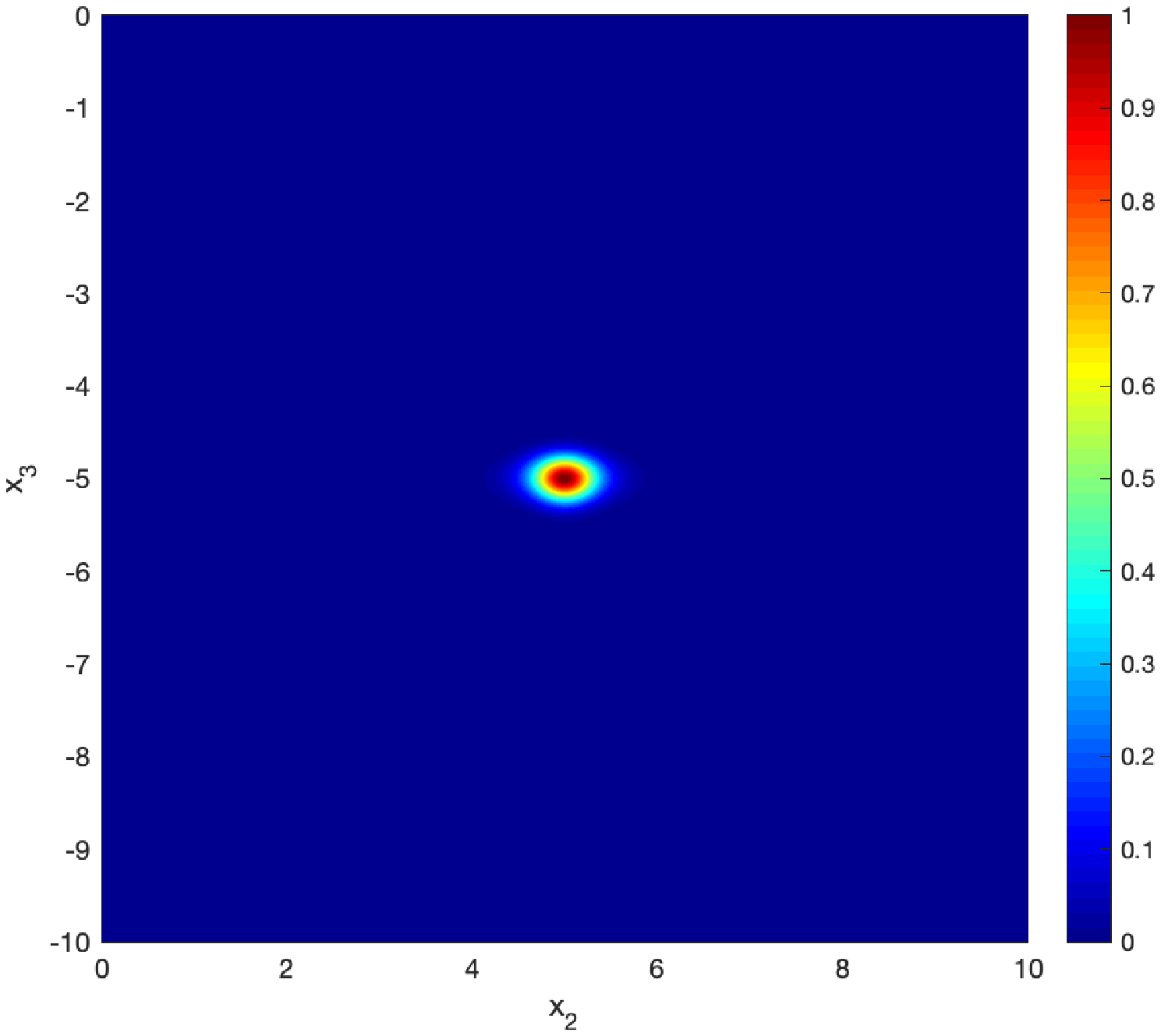}
\includegraphics[width=0.36\linewidth]{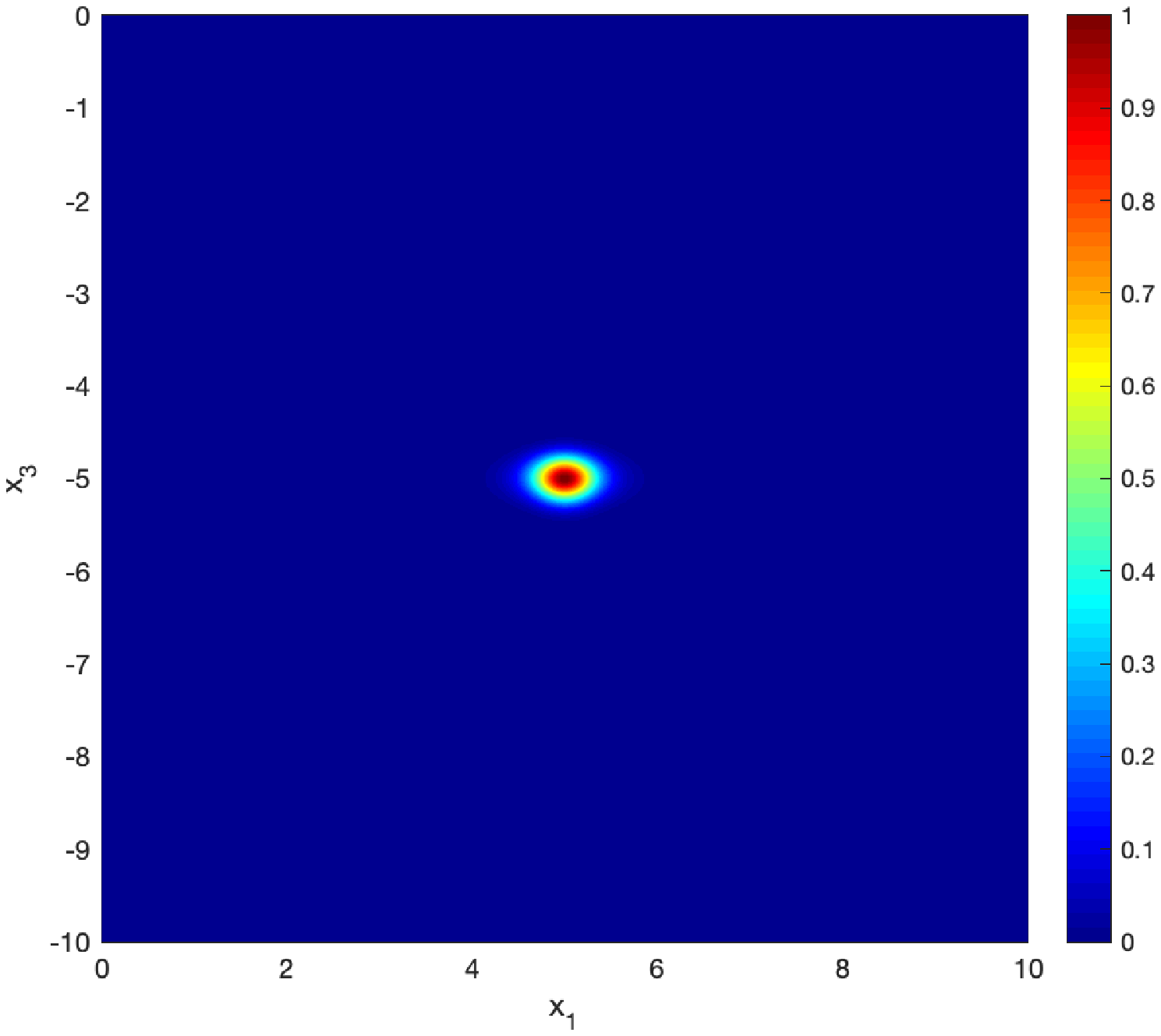}
     \caption{
     \linespread{1}
     Plot of $|\mathcal{H} \Psi (\bx_*;\bz) \be_j|^2$ as a function of the sampling point $\bz$ for a rectangular waveguide with cross section $(0,10)\times(0,10)$ where $\bx_* = (5,5,-5)$ and $k=3$. Top left: Iso-surface plot with iso-value $0.6$. Top right: $x_1x_2$-cross section image. Bottom left: $x_2x_3$-cross section image.  Bottom right: $x_1x_3$-cross section image. 
     } \label{fig point}
    \end{figure}
\begin{remark}
There are conditions to guarantee that the operator $\mathcal{T}$ is coercive. Here we mention one such condition: if $\Im \epsilon$ is bounded below by some positive constant, then $\mathcal{T}$ is coercive. The proof is almost exactly the same as in \cite[Lemma 4.1]{kirsch2004factorization}, we highlight the difference below:  
For any $\vecf \in \vecL^2(D)$,  let $\vecv$ be the unique solution to \eqref{forward scattered e 1} -- \eqref{forward scattered e 2} with $\vecE^i$ replaced by $\vecf$. According to the definition of $\mathcal{T}$ in \eqref{definition operator T}, there is $\mathcal{T}\vecf = k^2(\epsilon-1)(\vecf + \vecv)$. Note that $\epsilon-1$ is compactly supported in $D \subset W_R:=\Sigma \times (-R,0)$, then
\begin{eqnarray*}
\Im \langle \mathcal{T} \vecf, \vecf \rangle &=& k^2 \Im \int_D (\epsilon-1) (\vecf+\vecv) \overline{\vecf} d \bx \\
&=& \Im \Big( k^2 \int_D  (\epsilon-1) |\vecf + \vecv|^2 d \bx - \int_{W_R} |\mcurl \vecv|^2 d \bx \\
&& \qquad + k^2 \int_{W_R} |\vecv|^2 d \bx - \int_{\Sigma_R}  \mcurl \vecv \cdot (\overline{\vecv}\times \nu) d S_\by \Big) \\
&=&  k^2 \Im \int_D  (\epsilon-1) |\vecf + \vecv|^2 d \bx - \Im \int_{\Sigma_R}  (\nu \times \mcurl \vecv) \times \nu \cdot (\overline{\vecv}\times \nu) d S_\by,
\end{eqnarray*}
where the last integral is the duality pairing between $\widetilde{\vecH}^{-1/2}(\mcurl, \Sigma_R)$ and its dual space $\widetilde{\vecH}^{-1/2}(\mdiv, \Sigma_R)$.

From Remark \ref{traces modal}, we can directly write down the modal representation of $\vecv \times \nu|_{\Sigma_R}$ and $(\nu \times \mcurl \vecv) \times \nu|_{\Sigma_R}$ by
\begin{eqnarray*}
\vecv \times \nu|_{\Sigma_R} &=& \sum_{m=1}^\infty a_m \nabla_\Sigma u_m(\widehat{\bx}) - \frac{1}{k}\sum_{n=1}^\infty i g_n b_n \smcurl v_n(\widehat{\bx}), \\
(\nu \times \mcurl \vecv) \times \nu|_{\Sigma_R} &=& \sum_{m=1}^\infty a_m (-ih_m) \nabla_\Sigma u_m(\widehat{\bx}) \\
&&+  \frac{1}{k} \sum_{n=1}^\infty i g_n b_n (ig_n + \frac{-\mu_n^2}{ig_n} ) \smcurl v_n(\widehat{\bx}),
\end{eqnarray*}
for some constants $a_m$ and $b_n$. Therefore 
$
\Im \int_{\Sigma_R}  (\nu \times \mcurl \vecv) \times \nu \cdot (\overline{\vecv}\times \nu) d S_\by \le 0$, and consequently 
$$
\Im \langle \mathcal{T} \vecf, \vecf \rangle \ge  k^2 \Im \int_D  (\epsilon-1) |\vecf + \vecv|^2 d \bx.
$$
Then following \cite[Lemma 4.1]{kirsch2004factorization} we can obtain that $\mathcal{T}$ is coercive.
\end{remark}
    
Our analysis on $|\mathcal{H} \Psi (\bx_*;\bz) \be_j|$ begins with the following lemma.
\begin{lemma} \label{HPsi = green function}
\begin{equation}
 \mathcal{H} \Psi (\bx_*;\bz)\be_j = 
\left\{
\begin{array}{cc}
 \Big( \Re[\partial_{x_{*3}} \tG_e(\bx_*;\bz)] \be_j \Big)_{\small \mbox{prop.}} &  x_{*3}<z_3    \\
     \\
\Big(\Re[\partial_{z_{3}} \tG_e(\bx_*;\bz)] \be_j \Big)_{\small \mbox{prop.}}     &   x_{*3}>z_3
\end{array}
\right.,
\end{equation}
where $``\mbox{prop.}"$ means the projection onto the subspace spanned by the propagating modes.
\end{lemma}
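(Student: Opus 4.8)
The plan is to compare, mode by mode, the explicit modal formula \eqref{HPsi modal} for $\mathcal{H}\Psi(\bx_*;\bz)\be_j$ derived in the proof of Lemma \ref{lemma I_j reformulation} with the derivative of the modal Green function \eqref{G modal}. First I would substitute $c_m = i/(2h_m\lambda_m^2)$ and $d_n = -i/(2g_n\mu_n^2)$ into \eqref{HPsi modal}; a short computation shows the prefactors collapse to the real positive numbers $1/(4\lambda_m^2)$ and $1/(4\mu_n^2)$, so that $\mathcal{H}\Psi(\bx_*;\bz)\be_j$ is a finite sum over $m\le M$ and $n\le N$. This index range is exactly the set of propagating modes, since $\lambda_M<k<\lambda_{M+1}$ and $\mu_N<k<\mu_{N+1}$ force $h_m,g_n$ to be real precisely for $m\le M$, $n\le N$.

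Next I would differentiate the appropriate branch of \eqref{G modal}: for $x_{*3}>z_3$ I use the $x_3>y_3$ branch and differentiate in $z_3$, while for $x_{*3}<z_3$ I use the $x_3<y_3$ branch and differentiate in $x_{*3}$. Each mode carries a factor $e^{\pm i h_m z_3}$ or $e^{\pm i g_n z_3}$, so differentiation produces a factor $\mp i h_m$ (respectively $\mp i g_n$); these combine with $c_m$ (respectively $d_n$) to cancel the $h_m$ (respectively $g_n$) in the denominator and leave the real coefficients $1/(2\lambda_m^2)$ and $-1/(2\mu_n^2)$. Projecting onto the propagating subspace then truncates the resulting infinite sums to $m\le M$, $n\le N$, the same range as in \eqref{HPsi modal}.

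The core of the argument is to reconcile the conjugated $\bz$-dependence in \eqref{HPsi modal} with the unconjugated modes produced by differentiating \eqref{G modal}, and then to take the real part. Here I would use that for propagating modes ($h_m,g_n$ real) complex conjugation acts as the reflection $\bz\leftrightarrow\bz^-$ up to sign: $\overline{M}_m(\bz)=M_m(\bz^-)$, $\overline{P}_n(\bz)=-P_n(\bz^-)$, and $\overline{Q}_n(\bz)=Q_n(\bz^-)$. Substituting these identities rewrites the $\bz$-factors of \eqref{HPsi modal} in terms of $M_m(\bz)-M_m(\bz^-)$, $P_n(\bz)-P_n(\bz^-)$ and $Q_n(\bz)+Q_n(\bz^-)$. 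Passing each mode to its trigonometric form, so that $M_m(\bx_*)-M_m(\bx_*^-)\propto\sin(h_m x_{*3})$, $Q_n(\bx_*)+Q_n(\bx_*^-)\propto\cos(g_n x_{*3})$, and so on, the real part of the differentiated Green function selects exactly the products of sines (for the $M_m$ and $P_n$ parts) and cosines (for the $Q_n$ parts) that appear in \eqref{HPsi modal}; the coefficients then agree once the numerical factors coming from the mode differences and from taking the real part are accounted for, reconciling the $1/(4\lambda_m^2)$ of \eqref{HPsi modal} with the $1/(2\lambda_m^2)$ of $\partial\tG_e$.

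I expect the main obstacle to lie in the $P_n,Q_n$ modes rather than the $M_m$ modes. For these, three sign subtleties must conspire: the conjugation identity $\overline{P}_n(\bz)=-P_n(\bz^-)$ carries a minus sign, the differentiation yields the negative coefficient $d_n(-i g_n)=-1/(2\mu_n^2)$, and upon taking the real part the imaginary vector part of $P_n$ and the real vector part of $Q_n$ are treated oppositely. One must verify that these signs cancel so as to reproduce the positive coefficient $1/(4\mu_n^2)$ together with the correct combination $[P_n(\bz)-P_n(\bz^-)]^T+[Q_n(\bz)+Q_n(\bz^-)]^T$ of \eqref{HPsi modal}. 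Finally I would record, as a consistency check, that both branches give the same expression, as they must, since \eqref{HPsi modal} is a single formula independent of the sign of $x_{*3}-z_3$.
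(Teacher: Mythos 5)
Your proposal is correct and takes essentially the same route as the paper: differentiate the appropriate branch of the modal representation \eqref{G modal}, exploit the conjugation--reflection identities of the propagating modes ($\overline{M}_m(\bz)=M_m(\bz^-)$, $\overline{P}_n(\bz)=-P_n(\bz^-)$, $\overline{Q}_n(\bz)=Q_n(\bz^-)$, which the paper implements by explicitly adding $\overline{\partial_{x_{*3}}\tG_e}$ to $\partial_{x_{*3}}\tG_e$ to form $2\Re[\partial_{x_{*3}}\tG_e]$), and match the propagating part term by term with \eqref{HPsi modal}. The sign cancellations you flag in the $P_n,Q_n$ terms do work out exactly as you anticipate, so the two computations coincide.
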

\begin{proof}
We first consider the case $x_{*3}<z_3$. We can directly obtain from \eqref{G modal} that
\begin{eqnarray*}
\partial_{x_{*3}} \tG_e(\bx_*;\bz) &=& 
\sum_{m=1}^\infty c_m(-ih_m)  M_m(\bx_*^-) [M_m^T(\bz) -M_m^T(\bz^-)]   \\
&& \hspace{-3cm}+ \sum_{n=1}^\infty d_n (-ig_n) [ P_n(\bx_*^-) - Q_n(\bx_*^-)] \Big( [ P_n(\bz) - P_n(\bz^-)]^T + [ Q_n(\bz) + Q_n(\bz^-)]^T \Big).
\end{eqnarray*}
We further take the conjugate of the above equation to obtain
\begin{eqnarray*}
\overline{\partial_{x_{*3}} \tG_e(\bx_*;\bz)} 
&=& 
\sum_{m=1}^M c_m(ih_m)  M_m(\bx_*) [M_m^T(\bz) -M_m^T(\bz^-)]  \\
&&  \hspace{-3cm}+\sum_{m=M+1}^\infty c_m(-ih_m)  M_m(\bx_*^-) [M_m^T(\bz) -M_m^T(\bz^-)]   \\
&& \hspace{-3cm}+ \sum_{n=1}^N -d_n (ig_n) [ -P_n(\bx_*) - Q_n(\bx_*)] \Big( [ P_n(\bz) - P_n(\bz^-)]^T + [ Q_n(\bz) + Q_n(\bz^-)]^T \Big) \\
&& \hspace{-3cm} + \sum_{n=N+1}^\infty d_n (-ig_n) [ P_n(\bx_*^-) - Q_n(\bx_*^-)] \Big( [ P_n(\bz) - P_n(\bz^-)]^T + [ Q_n(\bz) + Q_n(\bz^-)]^T \Big).
\end{eqnarray*}
Now we sum up the above two equations to get
\begin{eqnarray*}
2 \Re[\partial_{x_{*3}} \tG_e(\bx_*;\bz)] &=& 
\sum_{m=1}^M c_m(ih_m)  [M_m(\bx_*)-M_m(\bx_*^-)] [M_m^T(\bz) -M_m^T(\bz^-)]   \\
&&\hspace{-3.7cm}+ 2 \sum_{m=M+1}^\infty c_m(-ih_m)  M_m(\bx_*^-) [M_m^T(\bz) -M_m^T(\bz^-)] \\
&& \hspace{-3.7cm}+ \sum_{n=1}^N d_n (ig_n) \Big( [ P_n(\bx_*) - P_n(\bx_*^-)] +  [ Q_n(\bx_*) + Q_n(\bx_*^-)]  \Big)\Big( [ P_n(\bz) - P_n(\bz^-)]^T \\
&&+ [ Q_n(\bz) + Q_n(\bz^-)]^T \Big) \\
&& \hspace{-3.7cm}+ 2\sum_{n=N+1}^\infty d_n (-ig_n) [ P_n(\bx_*^-) - Q_n(\bx_*^-)] \Big( [ P_n(\bz) - P_n(\bz^-)]^T + [ Q_n(\bz) + Q_n(\bz^-)]^T \Big).
\end{eqnarray*}
From the above equation and the expression of $\mathcal{H} \Psi (\bx_*;\bz) \be_j$ in \eqref{HPsi modal}, we obtain that
$$
2 \Big( \Re[\partial_{x_{*3}} \tG_e(\bx_*;\bz)] \be_j \Big)_{\small \mbox{prop.}} = 2\mathcal{H} \Psi (\bx_*;\bz) \be_j.
$$
The case when $x_{*3}>z_3$ can be proved in the same way. This proves the theorem.
\end{proof}
Lemma \ref{HPsi = green function} implies that $|\mathcal{H} \Psi (\bx_*;\bz) \be_j|^2$ peaks when $\bz$ coincide with $\bx_*$ provided there is a good amount of propagating modes. Indeed from the modal representation in \eqref{HPsi modal}, we can directly plot  $|\mathcal{H} \Psi (\bx_*;\bz) \be_j|^2$
as a function of $\bz$ for any fixed $\bx_*$. In Fig.~\ref{fig point}, we plot $|\mathcal{H} \Psi (\bx_*;\bz) \be_j|^2$ for a rectangular waveguide with cross section $(0,10)\times(0,10)$ where $\bx_* = (5,5,-5)$ and $k=3$. We observe that $|\mathcal{H} \Psi (\bx_*;\bz) \be_j|^2$ peaks when $\bz$ coincide with $\bx_*$. 

Let us summarize this section. Theorem \ref{resolution I_j(z)} implies that that $I_j(\bz)$ behaves qualitatively as $\|  \mathcal{H} \Psi(\cdot;\bz) \be_j\|^2_{\vecL^2(D)}$. Since  $|\mathcal{H} \Psi (\bx_*;\bz) \be_j|$ peaks when $\bz$ coincide with $\bx_*$ as evidenced by Lemma \ref{HPsi = green function} and Fig.~\ref{fig point}, thus we can conclude that $\|  \mathcal{H} \Psi(\cdot;\bz) \be_j\|^2_{\vecL^2(D)}$ {(where we can approximate the $\vecL^2(D)$ norm by quadrature rules) peaks} in the scatterer $D$, and so does  $I(\bz)$.
\begin{remark}
If the scatterer $D$ is known as a priori, we can image $D$ by $\|  \mathcal{H} \Psi(\cdot;\bz) \be_j\|^2_{\vecL^2(D)}$; unfortunately, $D$ is what we aim to image. The robustness and efficiency of our imaging function $I(\bz)$ is rooted in that it behaves qualitatively as $\|  \mathcal{H} \Psi(\cdot;\bz) \be_j\|^2_{\vecL^2(D)}$ without knowing $D$ as a priori.
\end{remark}
\subsection{Implementation of Imaging Function}
In this section, we give the modal representation of the imaging function $I(\bz)$ in \eqref{imaging function Iz}.

To begin with, let us introduce
\begin{equation}
\vecU^s(\bx;\by):=(  \vecu^s(\bx;\by;\be_1), \vecu^s(\bx;\by;\be_2), \vecu^s(\bx;\by;\be_3) )
\end{equation}
for all $\bx \in \Sigma_r$ and all $\by \in \Sigma_r$. Now we define a $(M+N)\times(M+N)$-dimensional matrix $\vecU^s$ with $jj'$ entry $\vecU^s_{j j'}$ given by 
{ \tiny
\begin{equation} \label{Implementation matrix Us}
\left\{
\begin{array}{c}
\hspace{-5.7cm}\frac{1}{\lambda_j^2 \lambda_{j'}^2}\int\int \overline{M}^T_{j'}(\bx^-) \vecU^s(\bx;\by) \overline{M}_j(\by^-) \,d S_\by \,d S_\bx,    j \le M,  j' \le M    \\
\hspace{-2.5cm}\frac{1}{\lambda_{j}^2 \mu_{j'-M}^2} \int\int [\overline{P}_{j'-M}(\bx^-)- \overline{Q}_{j'-M}(\bx^-)  ]^T \vecU^s(\bx;\by) \overline{M}_j(\by^-) \,d S_\by \,d S_\bx,    j \le M,  j' \ge M    +1\\
\hspace{-3.35cm}\frac{1}{\mu_{j-M}^2 \lambda_{j'}^2} \int\int \overline{M}^T_{j'}(\bx) \vecU^s(\bx;\by) [\overline{P}_{j-M}(\bx^-)- \overline{Q}_{j-M}(\bx^-)  ] \,d S_\by \,d S_\bx,   j \ge M,  j' \le M    \\
\hspace{-0.25cm}\frac{1}{\mu_{j-M}^2 \mu_{j'-M}^2} \int\int [\overline{P}_{j'-M}(\bx^-)- \overline{Q}_{j'-M}(\bx^-)  ]^T \vecU^s(\bx;\by) [\overline{P}_{j-M}(\bx^-)-\overline{Q}_{j-M}(\bx^-)  ] \,d S_\by \,d S_\bx,   j \ge M,  j' \ge M  
\end{array}
\right.
\end{equation}
}
With such notation, we have that $I_\ell(\bz)$ in \eqref{imaging function Iz} (for $\ell=1,2,3$) can be written as
\begin{eqnarray*}
I_\ell(\bz) &=& \big|\langle \overline{\mathcal{N} \Psi(\cdot;\bz) \be_\ell} \times \nu,   \Psi(\cdot;\bz) \be_\ell\rangle \big| \\
&=& \Big| \int\int [\Psi(\bx;\bz) \be_\ell]^T \vecu^s(\bx;\by;\Psi(\by;\bz) \be_\ell)  \,d S_\by \, d S_\bx \Big| \\
&=& \Big| \int\int [\Psi(\bx;\bz) \be_\ell]^T \vecU^s(\bx;\by)  [\Psi(\by;\bz) \be_\ell]  \,d S_\by \, d S_\bx \Big| =\sum_{j=1}^{M+N} \sum_{j'=1}^{M+N} \vecg^\ell_{j'} \vecU{jj'} \vecg^\ell_j \\
&=& (\vecg^\ell)^T \vecU \vecg^\ell
\end{eqnarray*}
where $\vecg^\ell$ is a $(M+N)\times 1$ vector with $j$-th entry given by
\begin{equation} \label{Implementation vector g}
\vecg^\ell_{j}:=
\left\{
\begin{array}{cc}
\int  M^T_j(\by^-)[\Psi(\by;\bz) \be_\ell]  \,d S_\by,  &  j \le M   \\
\\
 \int    [P_{j-M}(\by^-) - Q_{j-M}(\by^-)]^T  [\Psi(\by;\bz) \be_\ell] \,d S_\by,  &  j \ge M+1
\end{array}
\right.
\end{equation}
With $\vecU$ given by \eqref{Implementation matrix Us} and $\vecg^\ell$ given by \eqref{Implementation vector g}, the imaging function $I(\bz)$ can be written as
\begin{equation} \label{imaging function modal}
I(\bz) = \sum_{\ell=1}^3 \big|(\vecg^\ell)^T \vecU \vecg^\ell \big|.
\end{equation}
    \begin{figure}[ht!]
    \centering
\includegraphics[width=0.422\linewidth]{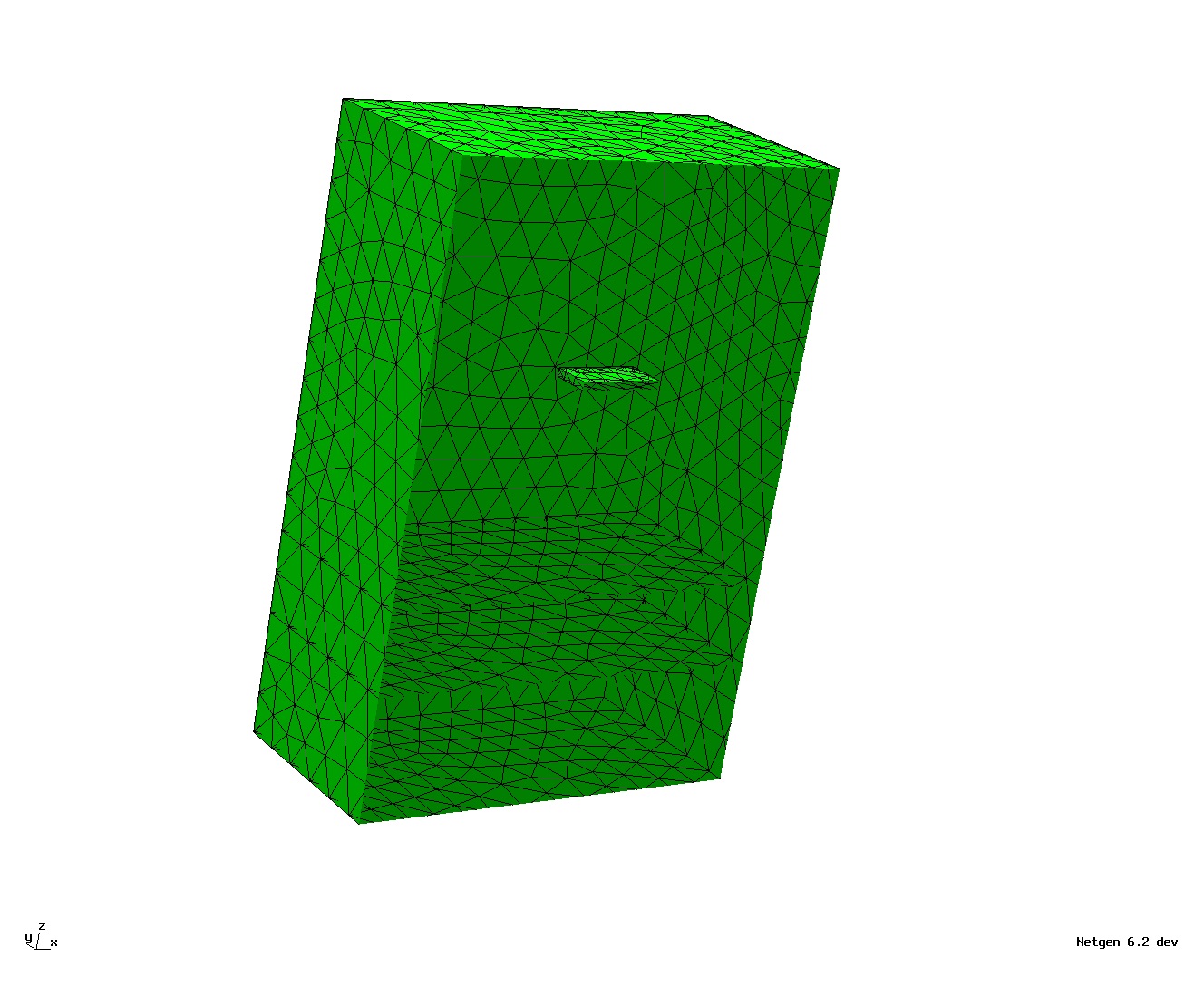}
     \caption{
     \linespread{1}
     Three dimensional view of the generated mesh.
     } \label{domain}
    \end{figure}
\section{Numerical Examples}\label{Numerics}
In this section, we provide numerical examples to illustrate our sampling type method.

    \begin{figure}[ht!]
    \includegraphics[width=0.33\linewidth]{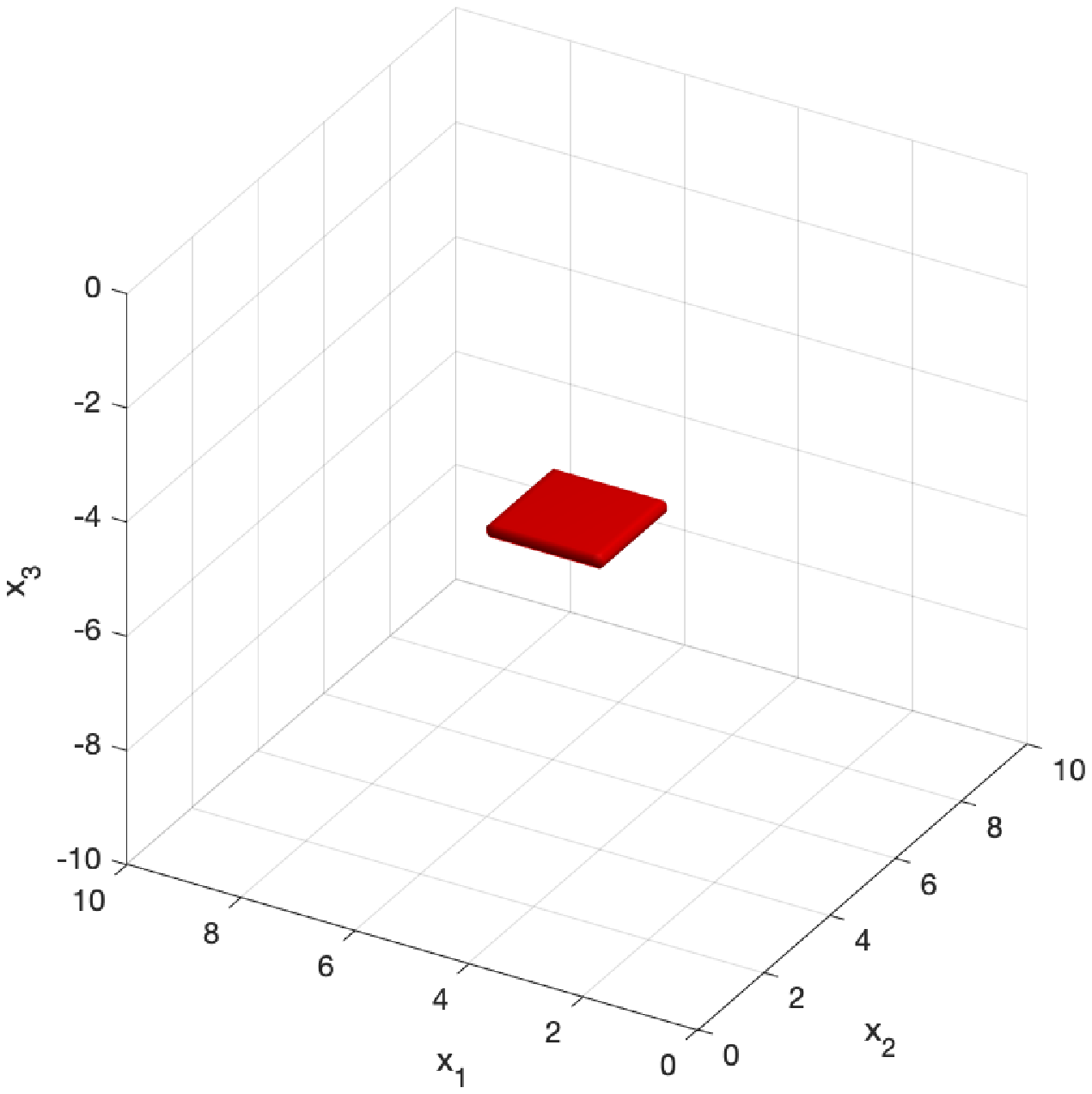}\includegraphics[width=0.33\linewidth]{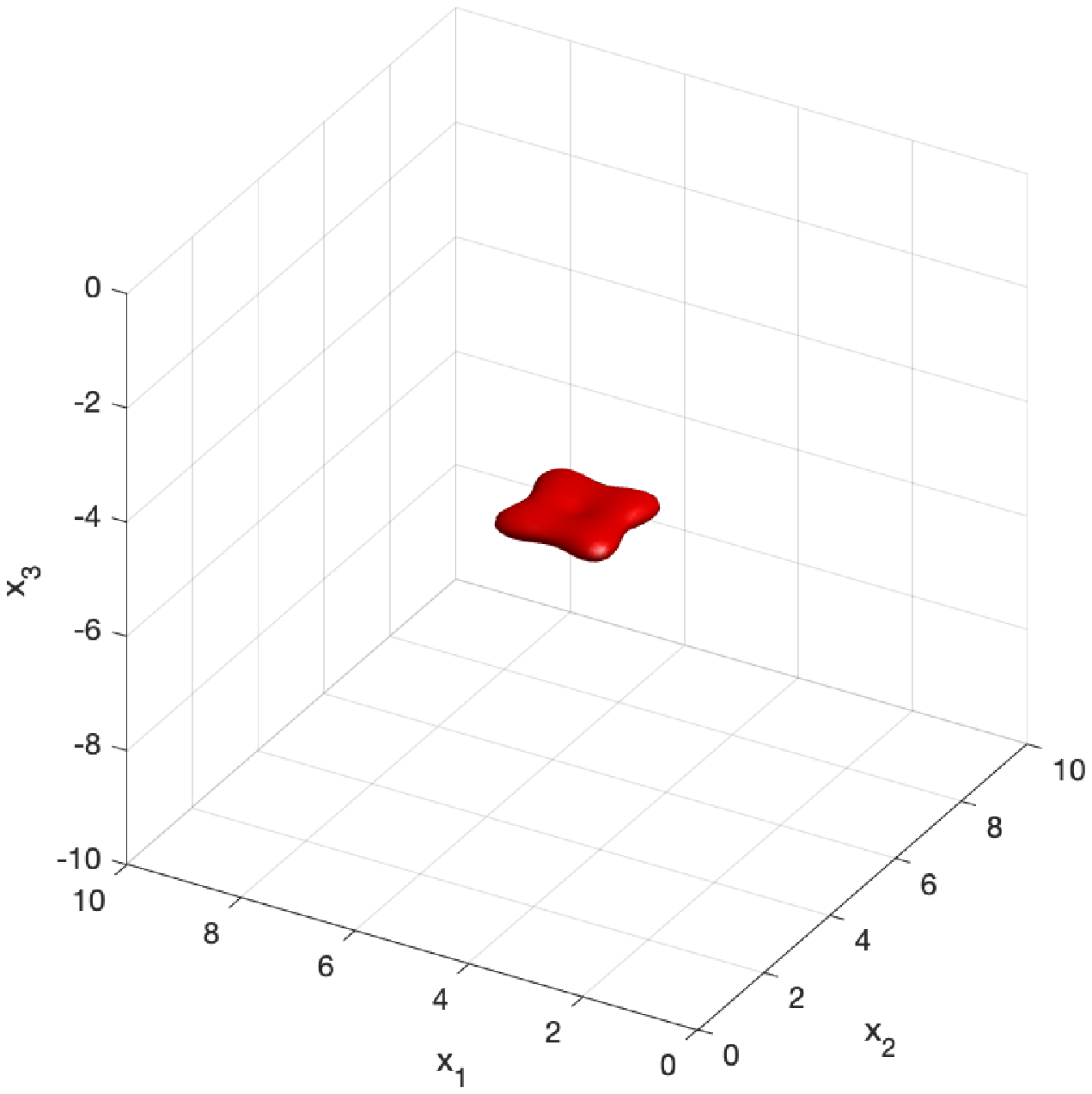}\includegraphics[width=0.33\linewidth]{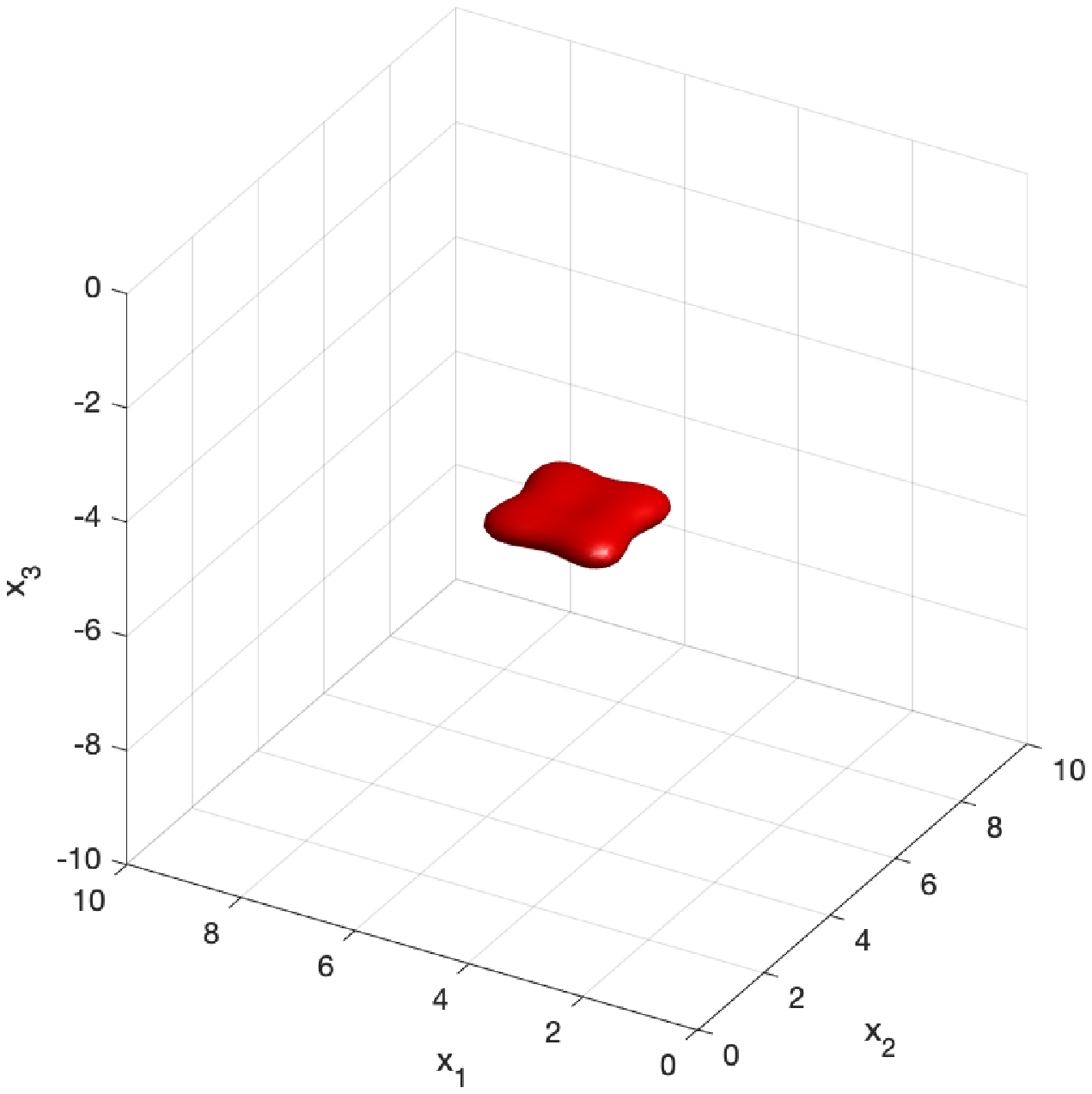}
\includegraphics[width=0.33\linewidth]{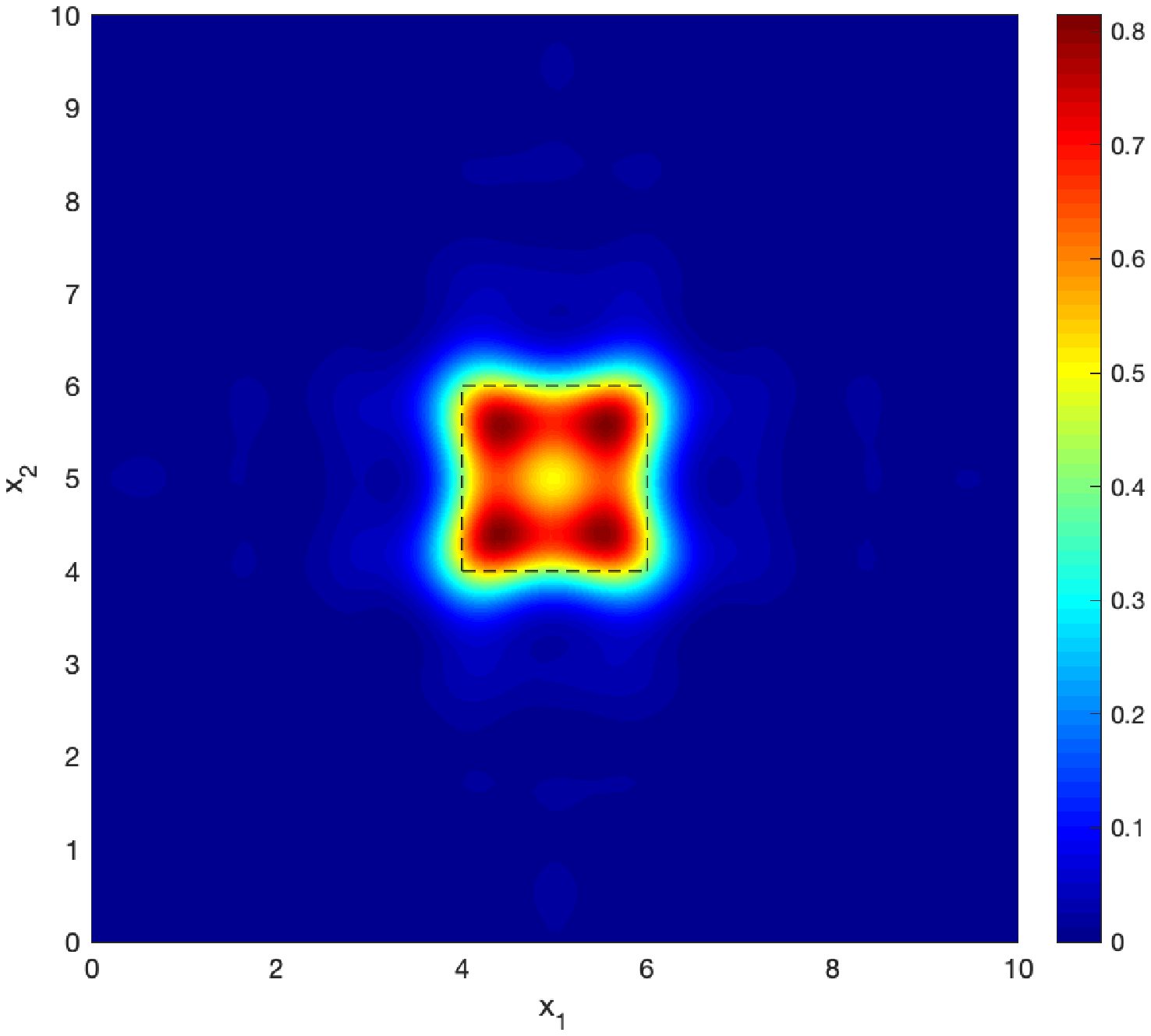}\includegraphics[width=0.33\linewidth]{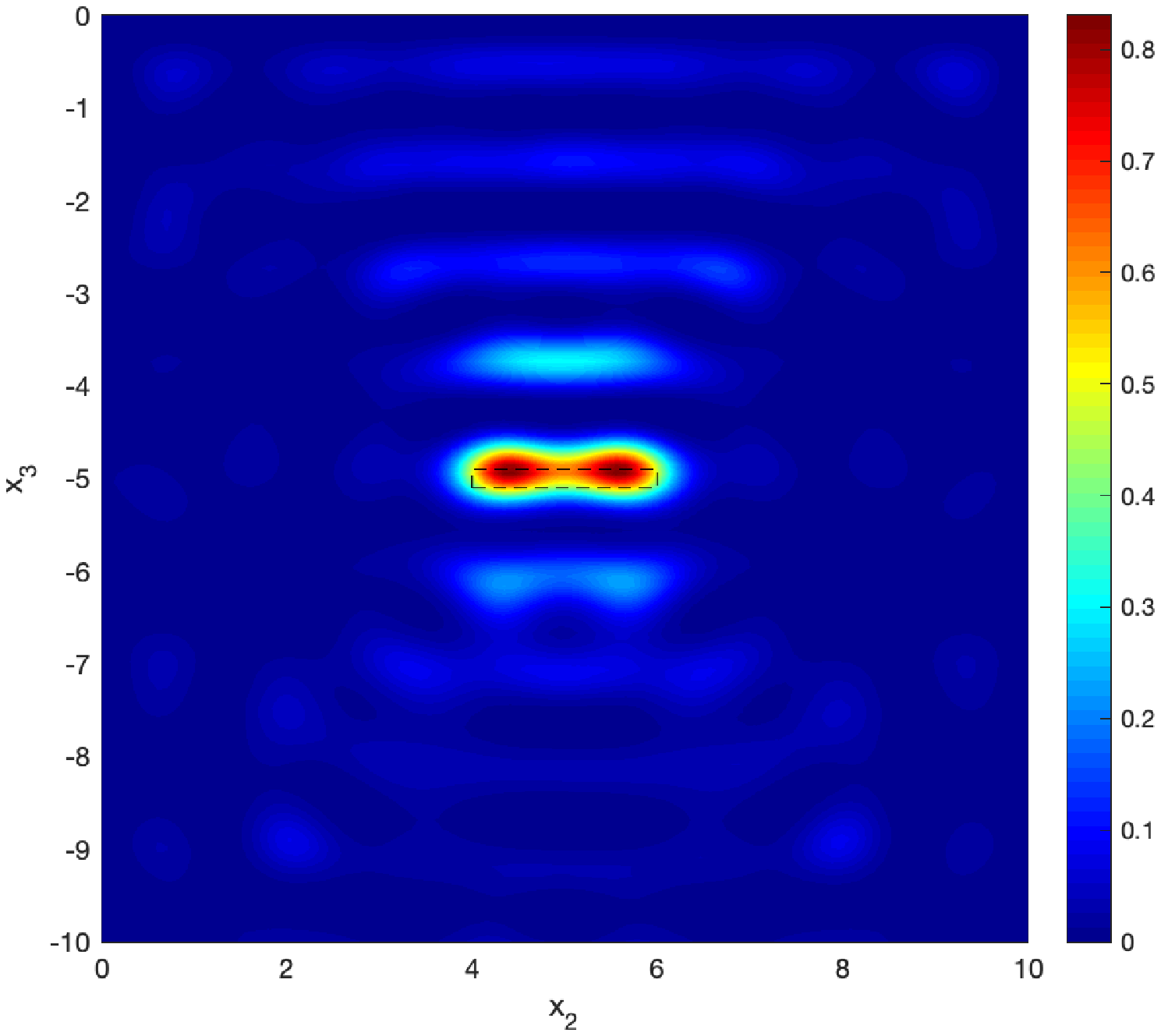}\includegraphics[width=0.33\linewidth]{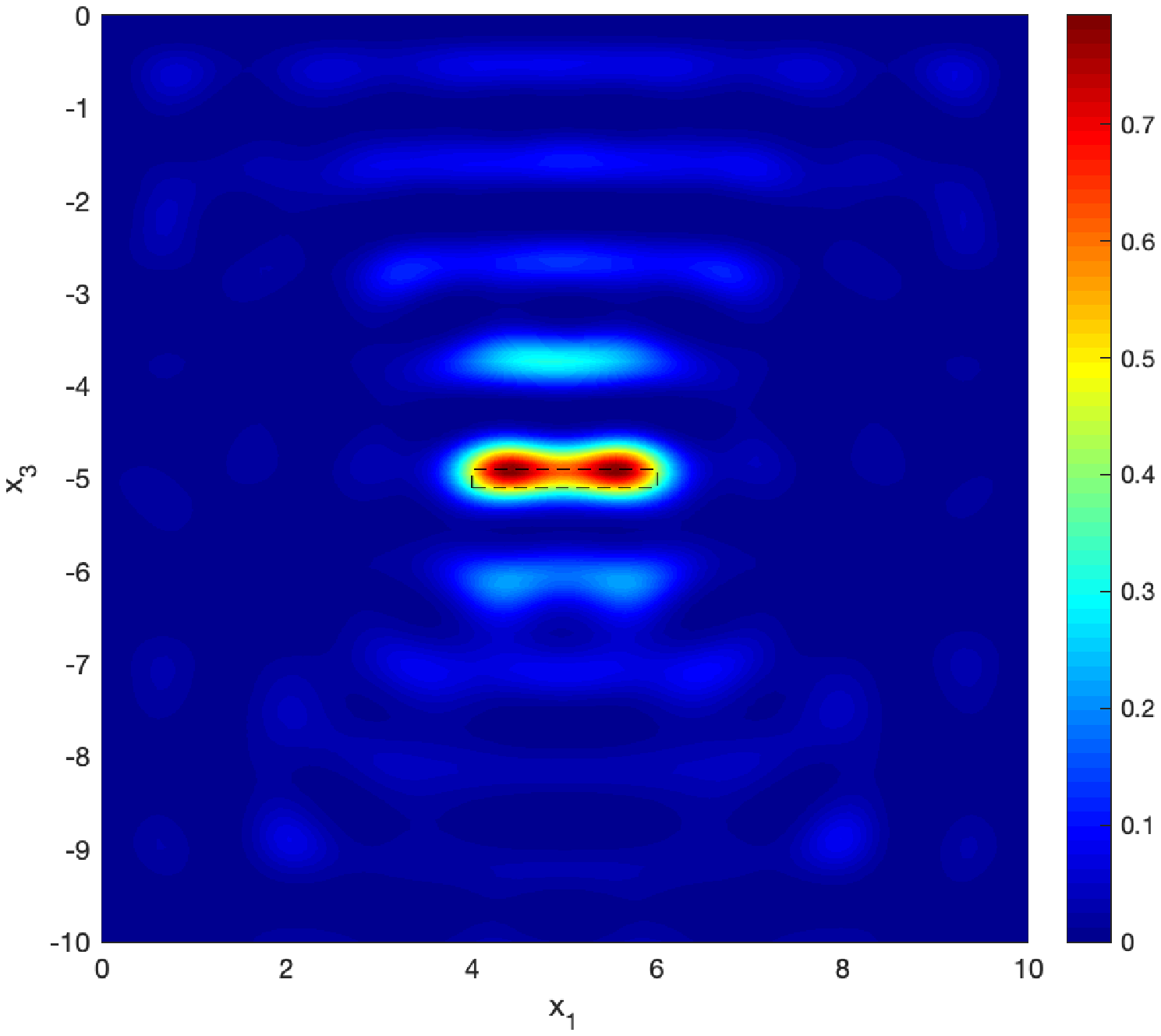}
     \caption{
     \linespread{1}
Image of a {cuboid}. Top left: exact. Top middle: three dimensional image using iso-surface plot with iso-value $0.6$. Top right: three dimensional image using iso-surface plot with iso-value $0.4$. 
At the bottom, we plot the cross section images where the exact geometry is indicated by the dashed line. Bottom left: $x_1x_2$-cross section image.  Bottom middle: $x_2x_3$-cross section image. Bottom right: $x_1x_3$-cross section image. 
     } \label{fig c1}
    \end{figure}

We consider a rectangular waveguide with cross-section $\Sigma = (0,10)\times(0,10)$. We generate the synthetic data $\vecU$ using the Finite Element computational software Netgen/NGSolve \cite{schoberl1997netgen}. To be more precise, the computational domain is $\Sigma \times (-15,0)$ and the measurements are on the cross section $ \Sigma \times \{-10\}$. We apply a Perfectly Matched Layer (PML) in $\Sigma \times (-15,-12)$ and choose a complex-valued PML absorbing coefficient $8+8i$ to handle both the propagating modes and evanescent modes. See Fig. \ref{domain} for an illustration. We directly compute $\vecU$ using Netgen/NGSolve ``Integrate" function after we have  computed the measurements $\vecu^s(\bx;\by;\be_\ell)$ with $\bx,\by$ on the measurement surface and $\ell=1,2,3$. We further add $5\%$ Gaussian noise to the synthetic data $\vecU$ to implement the imaging function given by \eqref{imaging function modal} in Matlab; for the best visualization, we plot $I^2(\bz)$ and we always normalize it such that the maximum value is $1$, unless otherwise specified.

In all of the numerical examples, we set the relative electric permittivity $\epsilon=2+2i$ and apply the quadratic edge element to solve for the scattered electric wave field. The mesh size is chosen as $0.9$ in the homogeneous waveguide domain and $0.9/\sqrt{2}$ in the scatterer domain. In most examples, the wavenumber is taken as $k=3$ (unless specified), in which case there are $146$ propagating modes { ($M=82$ and $N=64$). We further consider smaller and larger wavenumbers to illustrate the performance of our sampling type method with respect to different wavenumbers (see Fig. \ref{fig lcyl} and \ref{fig lball}).}

    \begin{figure}[ht!]
    \includegraphics[width=0.33\linewidth]{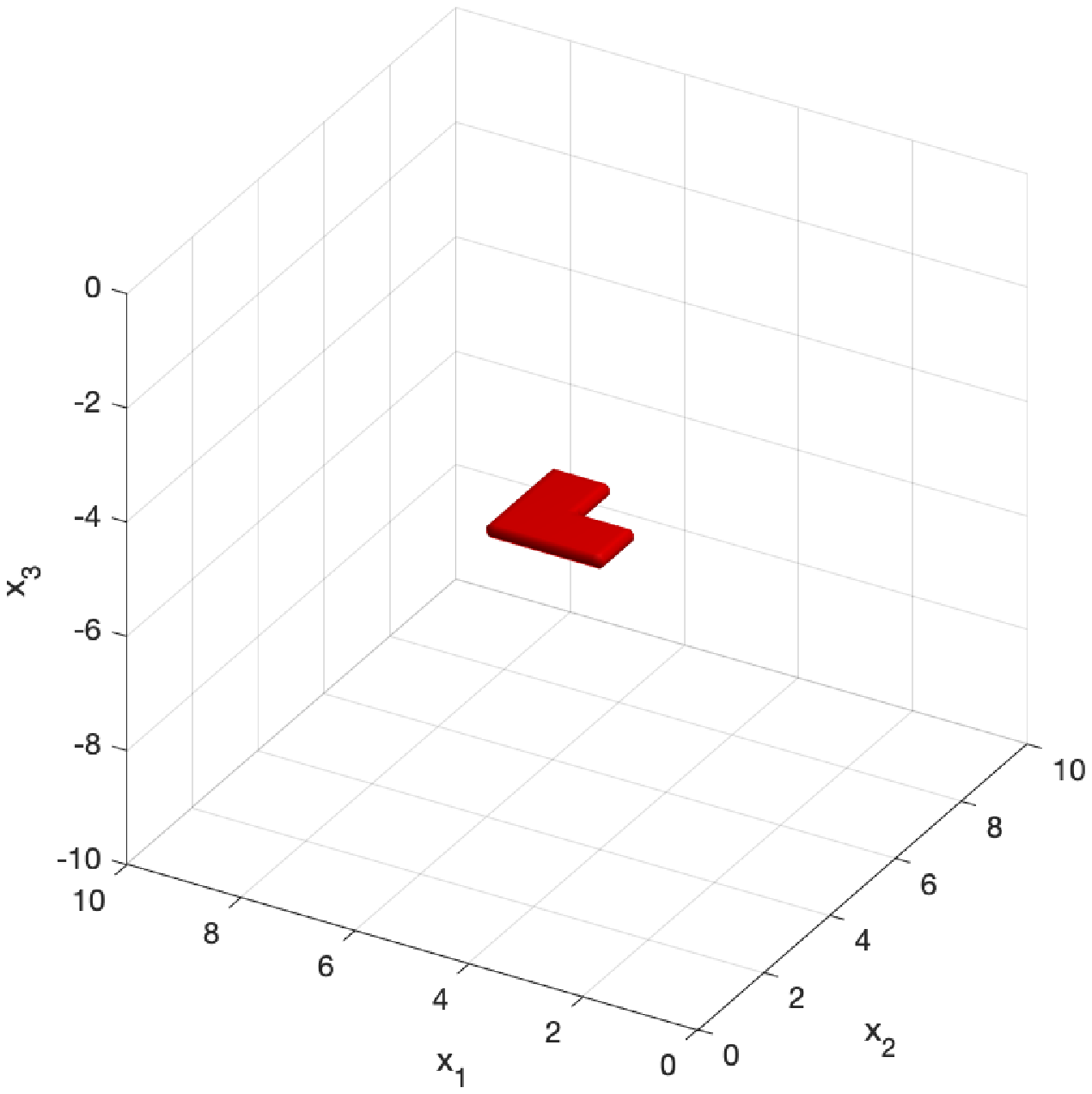}\includegraphics[width=0.33\linewidth]{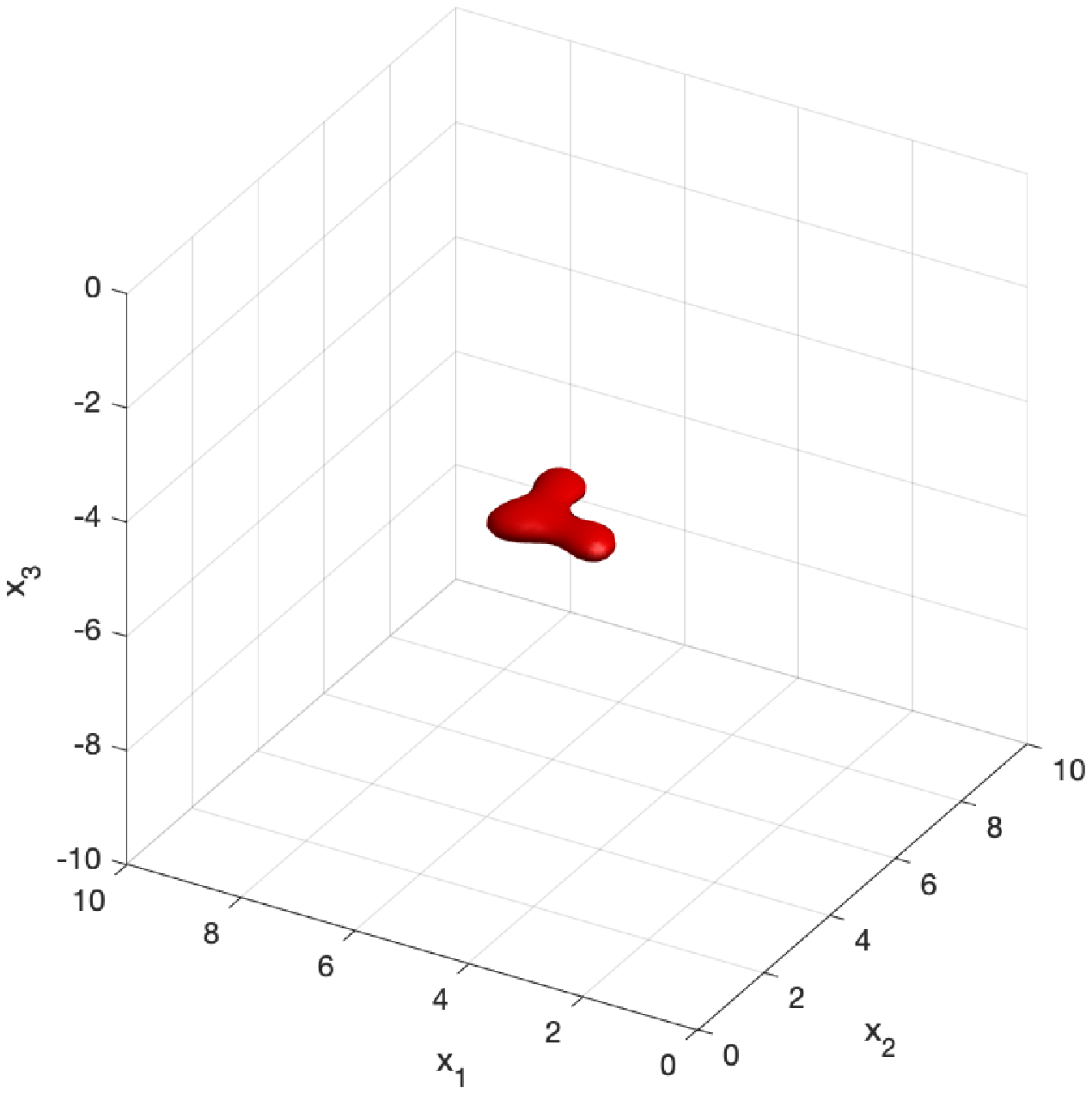}\includegraphics[width=0.33\linewidth]{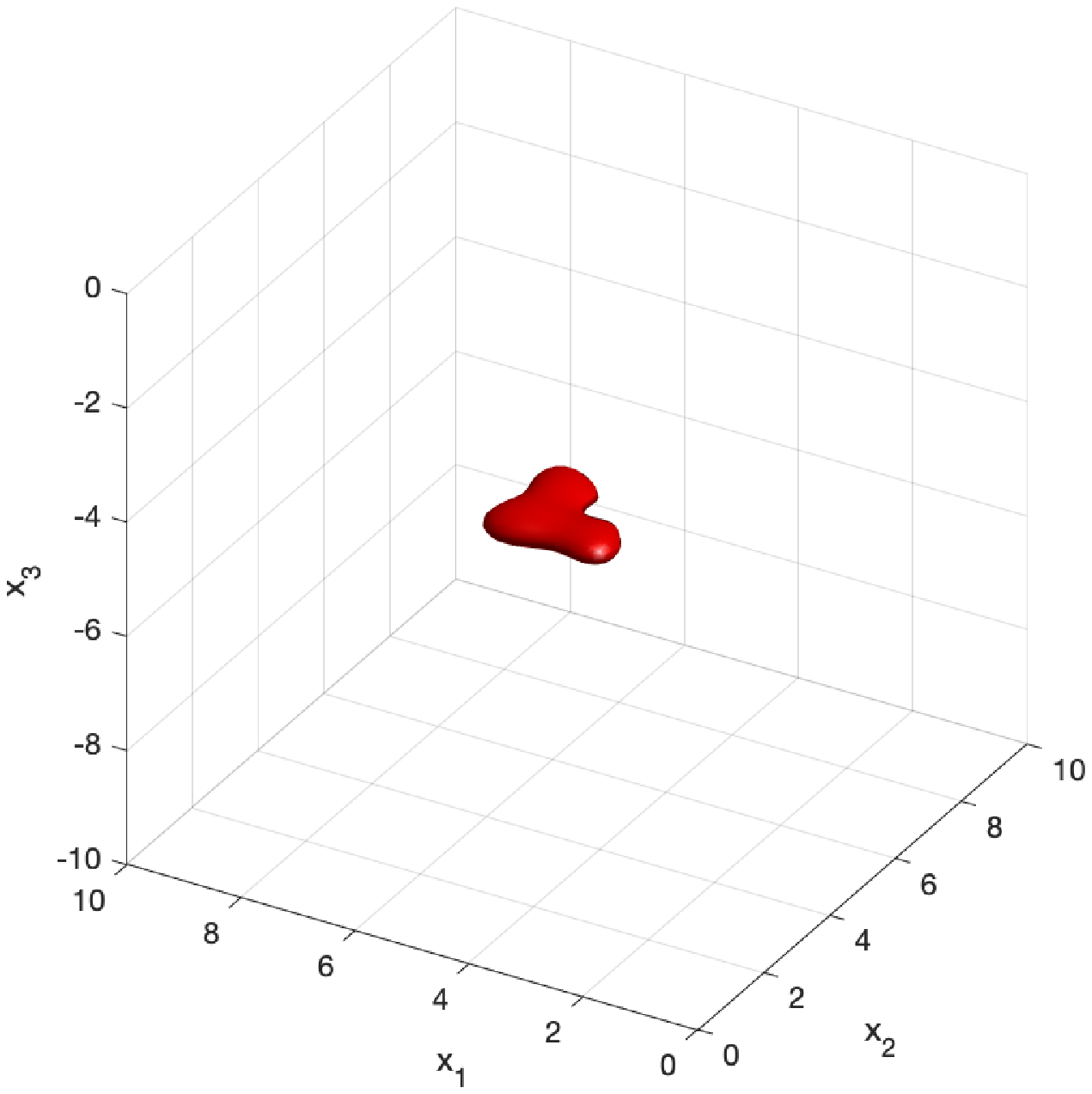}
\includegraphics[width=0.33\linewidth]{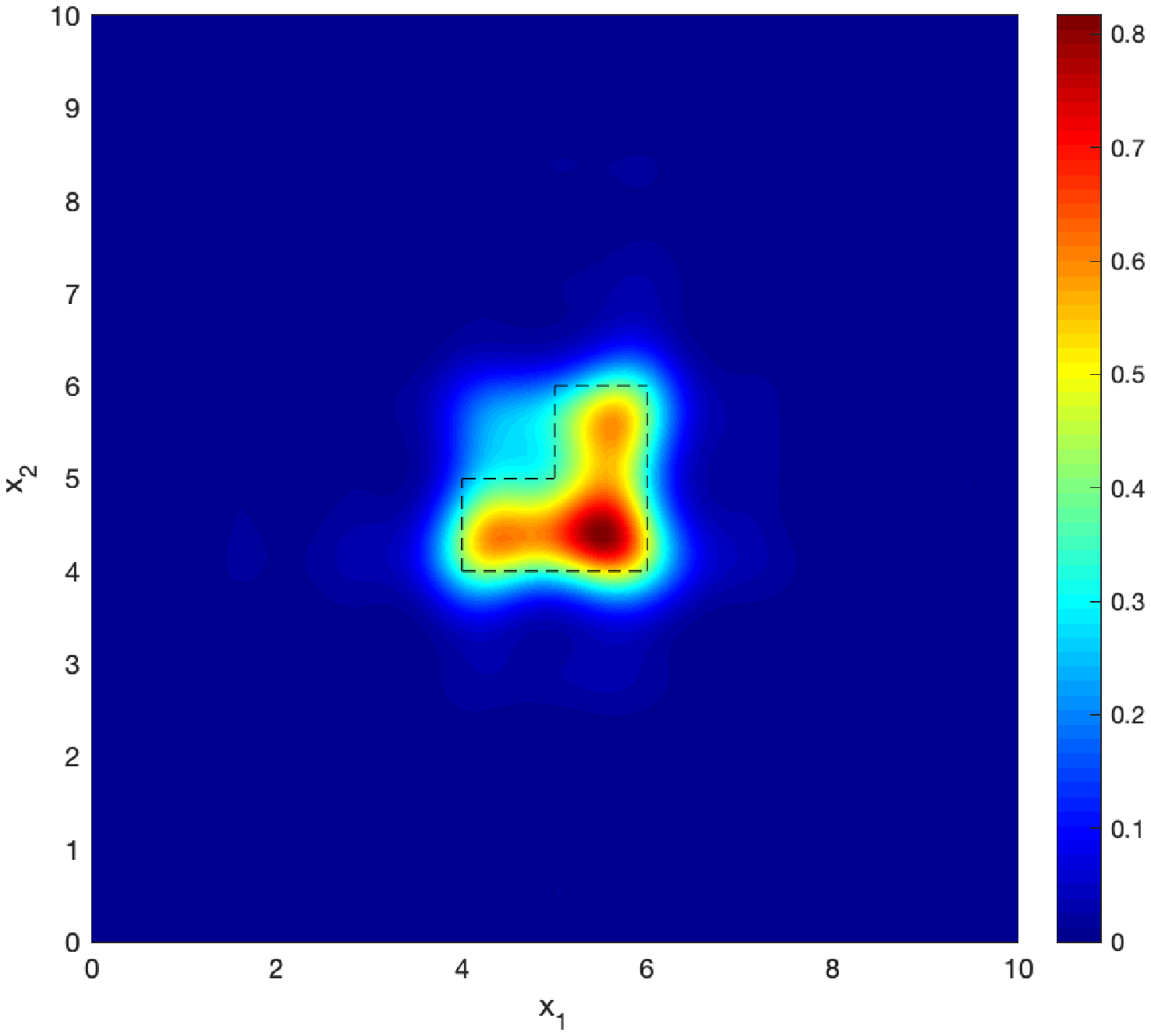}\includegraphics[width=0.33\linewidth]{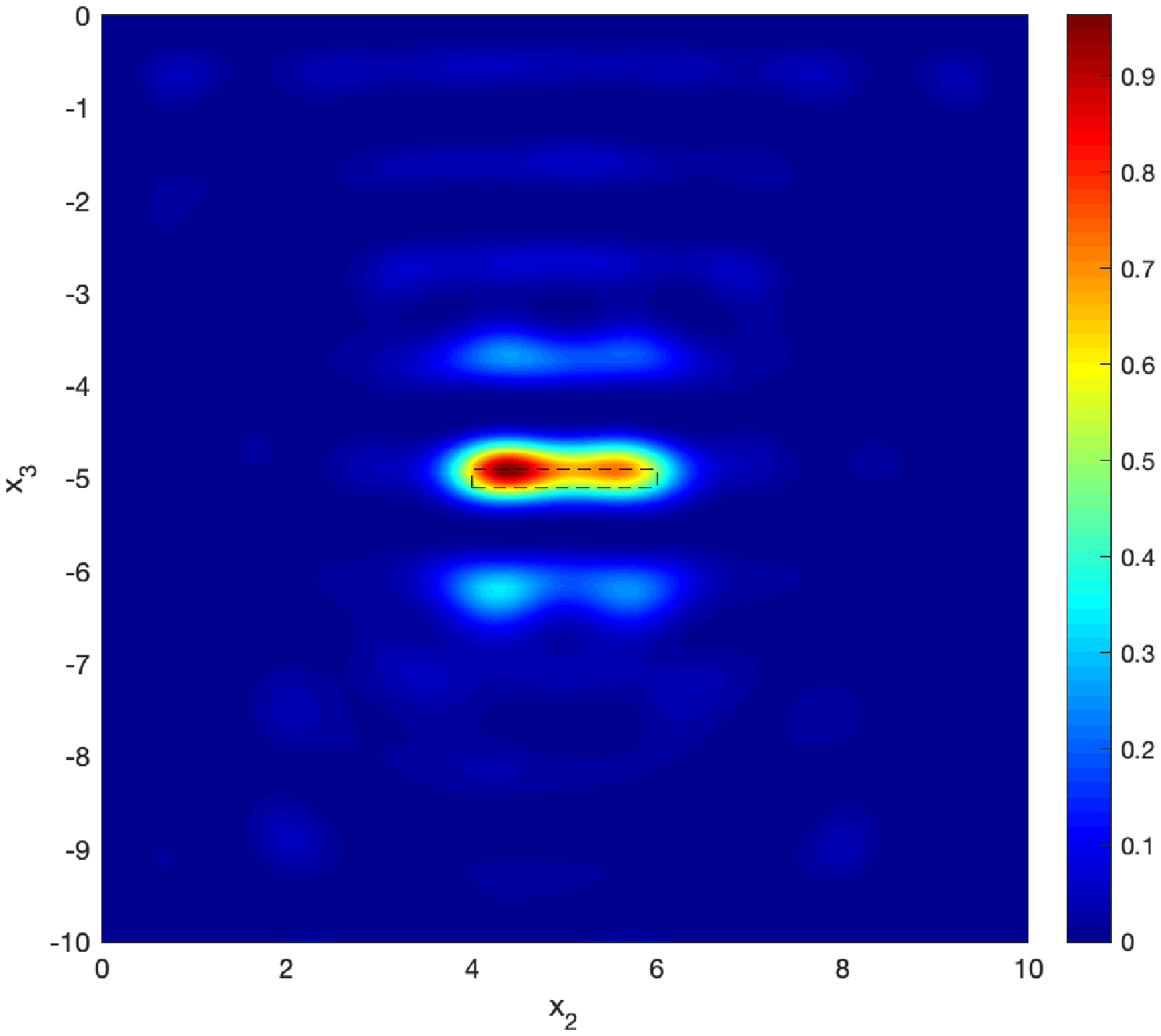}\includegraphics[width=0.33\linewidth]{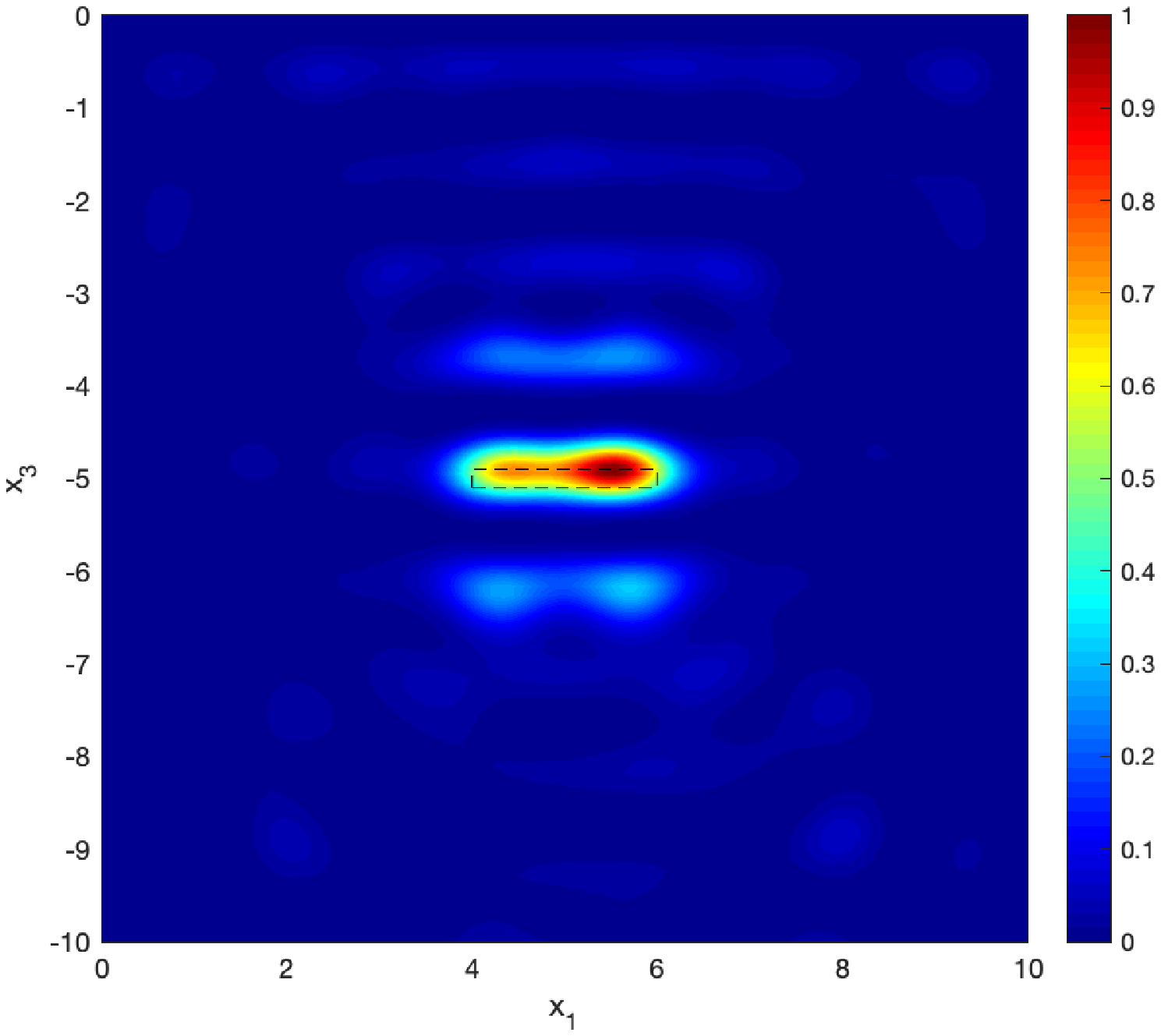}
     \caption{
     \linespread{1}
Image of a L-shape scatterer. Top left: exact. Top middle: three dimensional image using iso-surface plot with iso-value $0.5$. Top right: three dimensional image using iso-surface plot with iso-value $0.4$.
At the bottom, we plot the cross section images where the exact geometry is indicated by the dashed line. Bottom left: $x_1x_2$-cross section image.  Bottom middle: $x_2x_3$-cross section image. Bottom right: $x_1x_3$-cross section image. 
     } \label{fig L}
    \end{figure}
    
The first numerical example Fig.~\ref{fig c1} is imaging of a {cuboid} given by
\begin{eqnarray*}
\{ \bx: |x_1-5|< 1, |x_2-5|< 1, |x_3+5|< 0.1 \}.
\end{eqnarray*}
We provide both three-dimensional image and cross section images. The two three-dimensional images are generated using Matlab function ``isosurface" with iso-value $0.6$ and $0.4$ respectively. The $x_1x_2$-cross section image is at $x_3=-5$,  the $x_2x_3$-cross section image is at $x_1=5$, and the $x_1x_3$-cross section image is at $x_2=5$.

The next numerical example Fig.~\ref{fig L} is imaging of a L-shape scatterer given by
\begin{eqnarray*}
\{\bx: -1<x_1-5<0, -1<x_2-5<0, |x_3+5|<0.1\} \\\cup \{\bx: 0<x_1-5<1, |x_2-5|<1, |x_3+5|<0.1\}.
\end{eqnarray*}
Here we plot the three dimensional images using iso-surface plot with iso-value $0.5$ and $0.4$ respectively. 

{
To further illustrate our imaging method, we consider an elongated scatterer, a cylindrical scatterer given by
\begin{eqnarray*}
\{ \bx: |x_1-5|^2 +  |x_2-5|^2< 0.1^2, |x_3+5|< 1 \}.
\end{eqnarray*}
Fig. \ref{fig lcyl} provides images with $k=3$ ($M=82$ and $N=64$) and $k=5$ ($M=213$ and $N=183$). We first observe  that one can obtain better images using higher wavenumber. Furthermore, together with Fig. \ref{fig c1} -- \ref{fig L}, we can conclude that our imaging method can image both flat and elongated scatterers. 
}
    \begin{figure}[ht!]
    \includegraphics[width=0.33\linewidth]{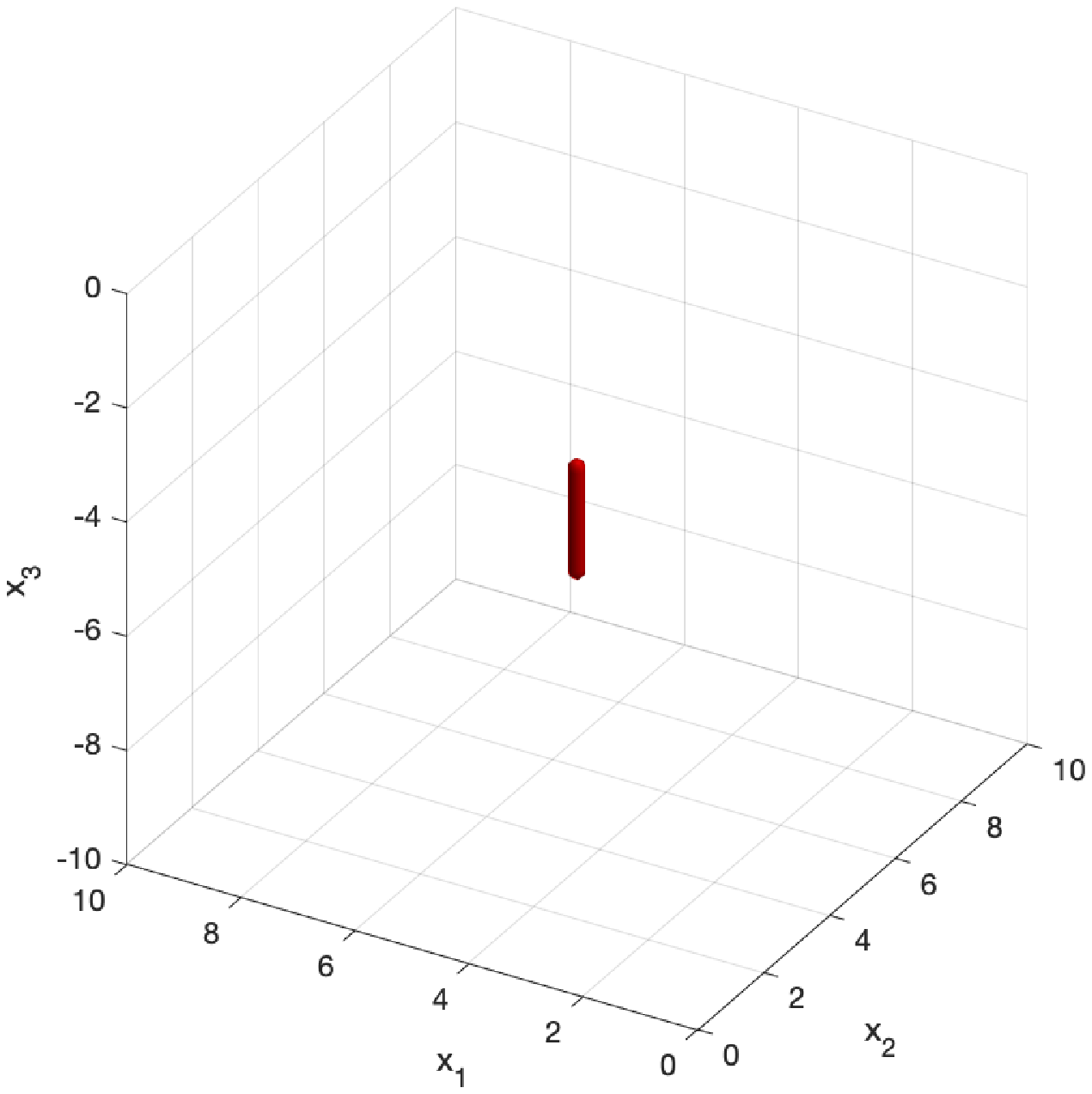}\includegraphics[width=0.33\linewidth]{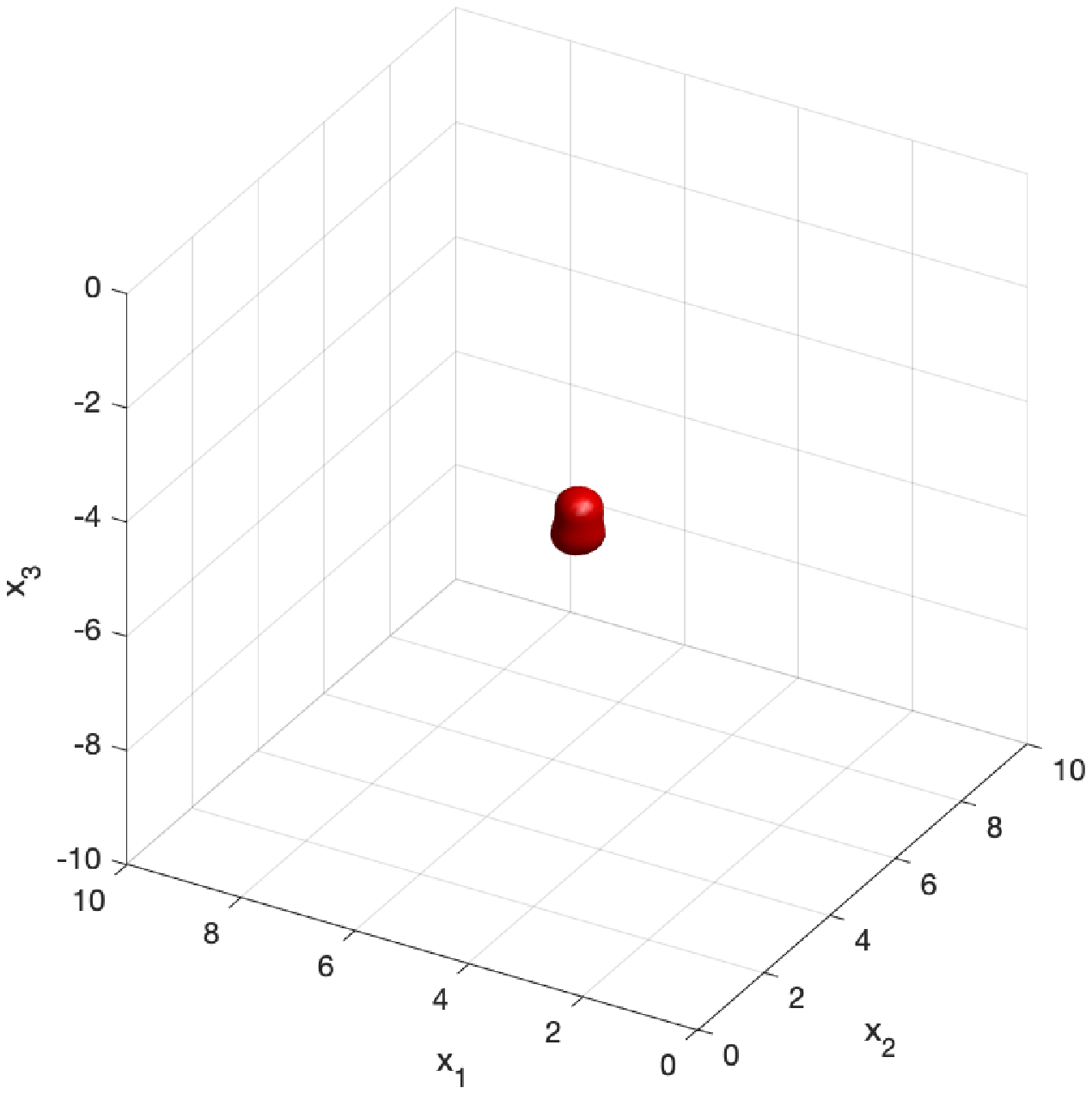}\includegraphics[width=0.33\linewidth]{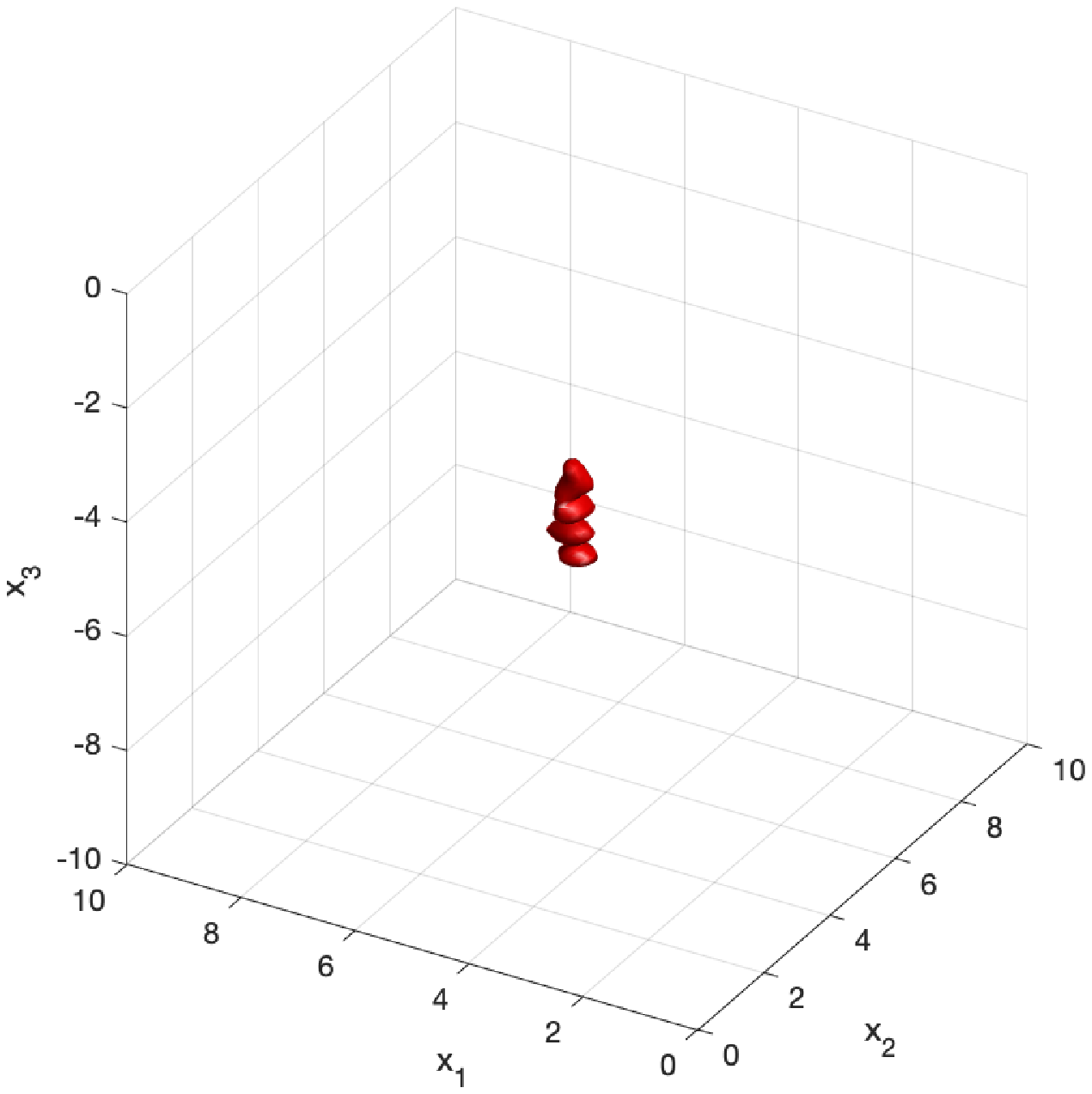}
\includegraphics[width=0.33\linewidth]{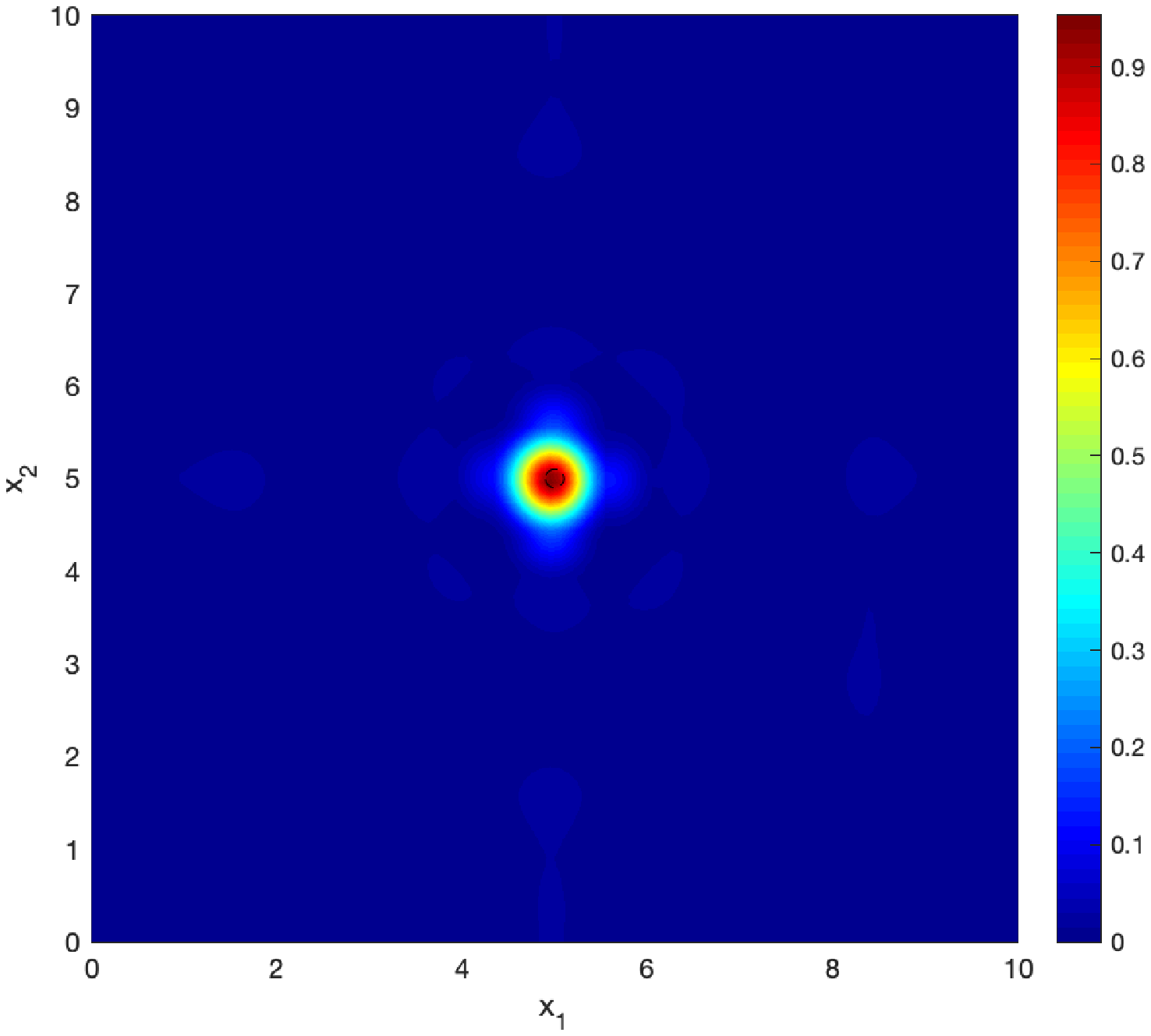}\includegraphics[width=0.33\linewidth]{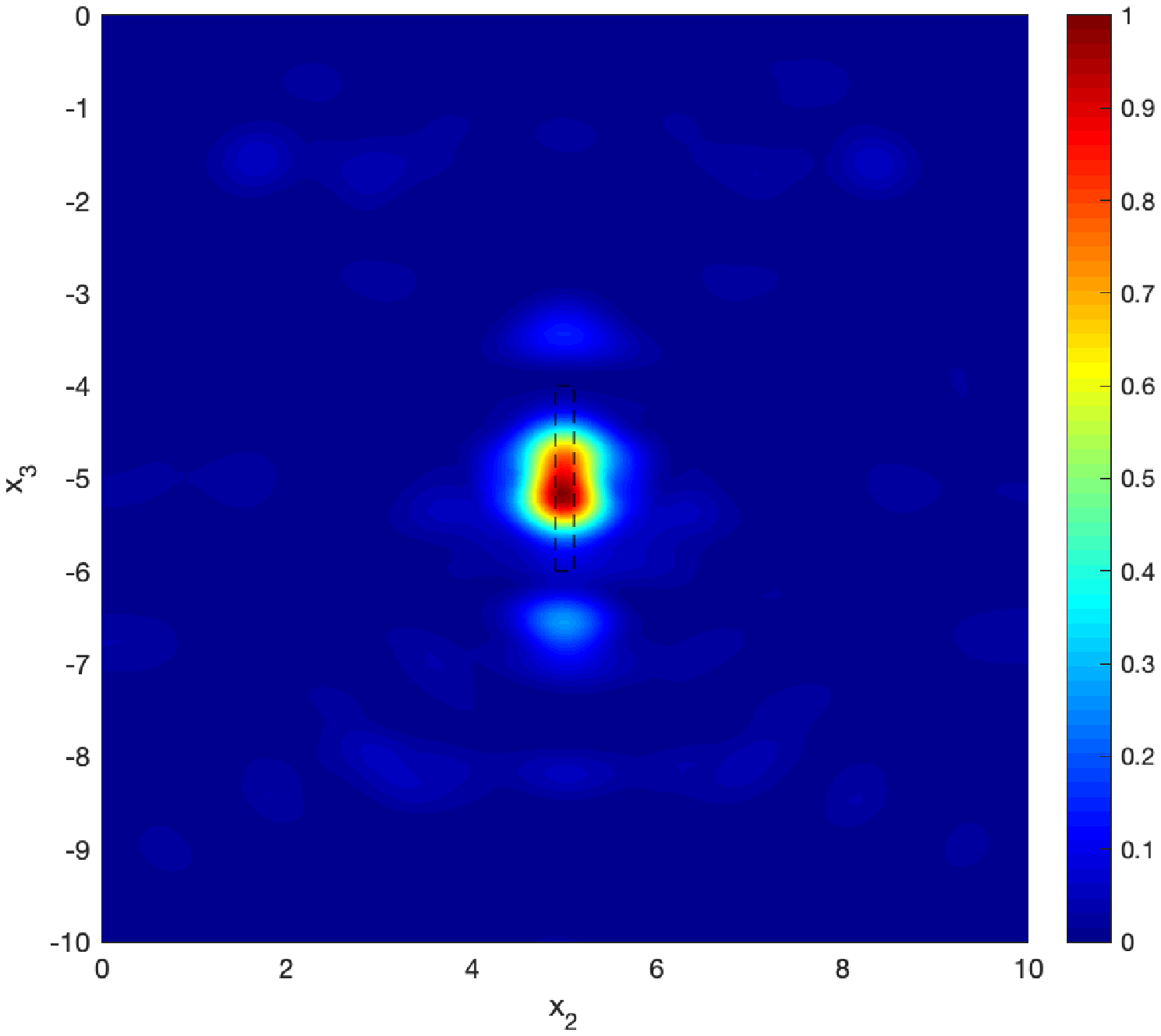}\includegraphics[width=0.33\linewidth]{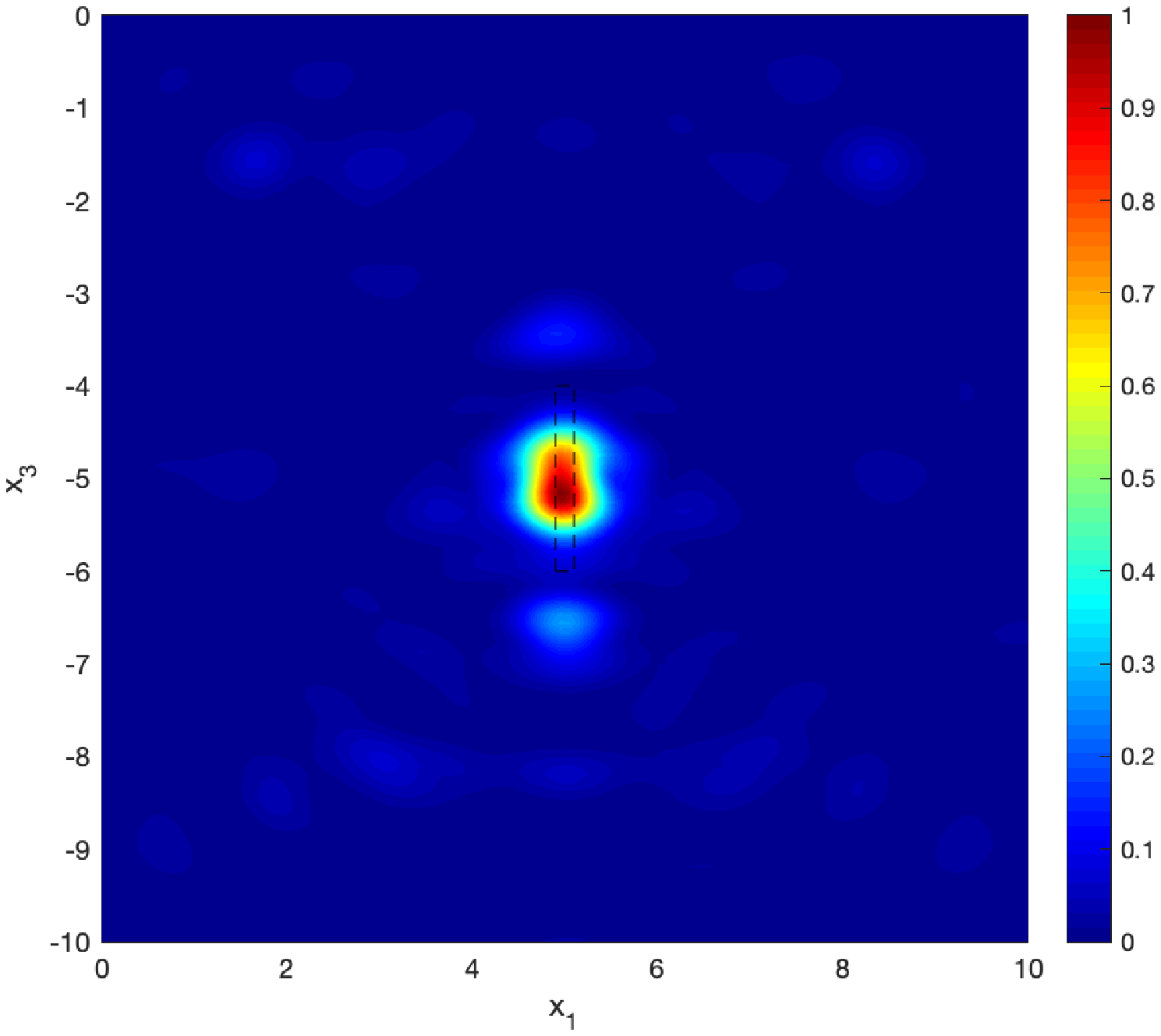}
\includegraphics[width=0.33\linewidth]{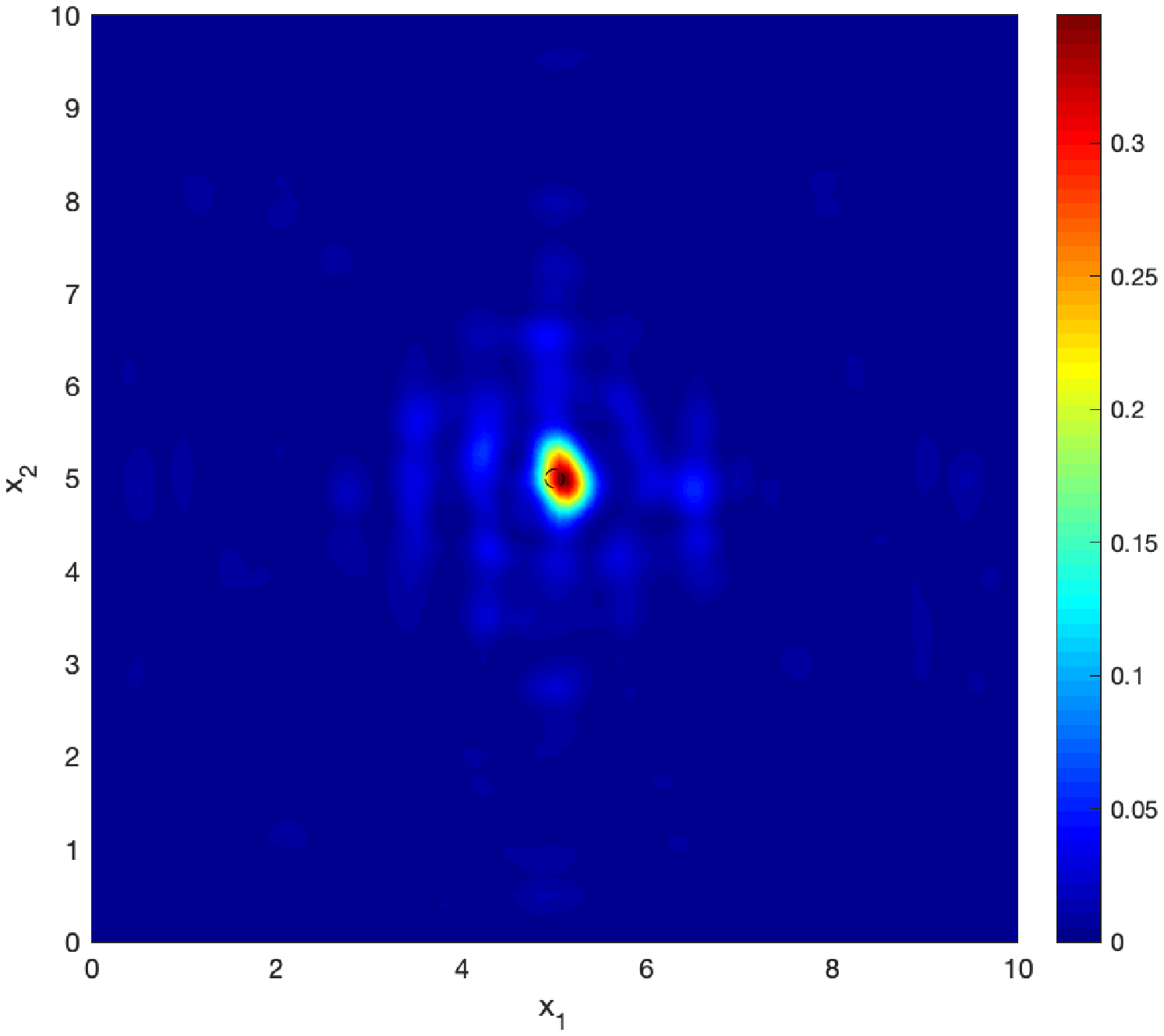}\includegraphics[width=0.33\linewidth]{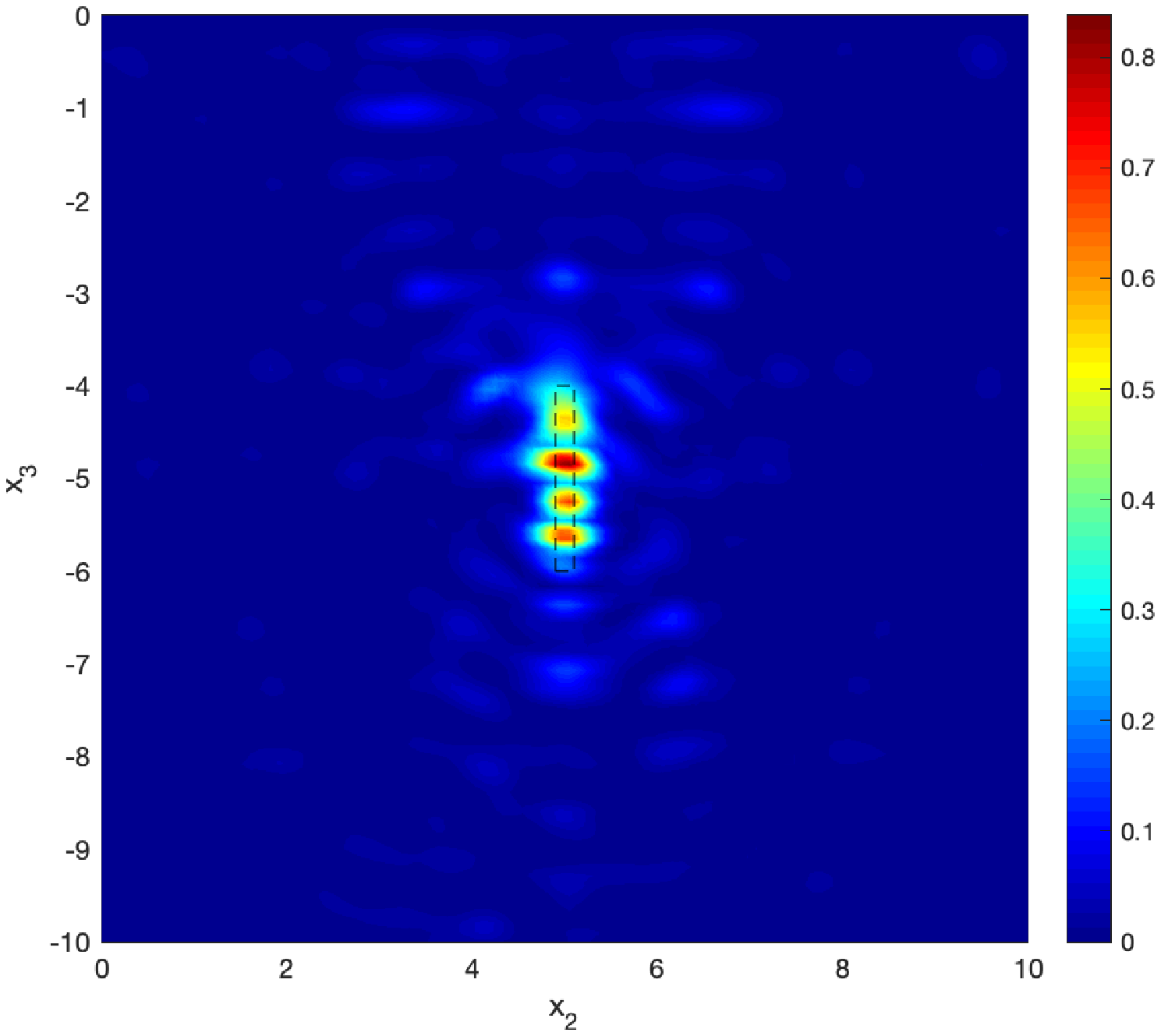}\includegraphics[width=0.33\linewidth]{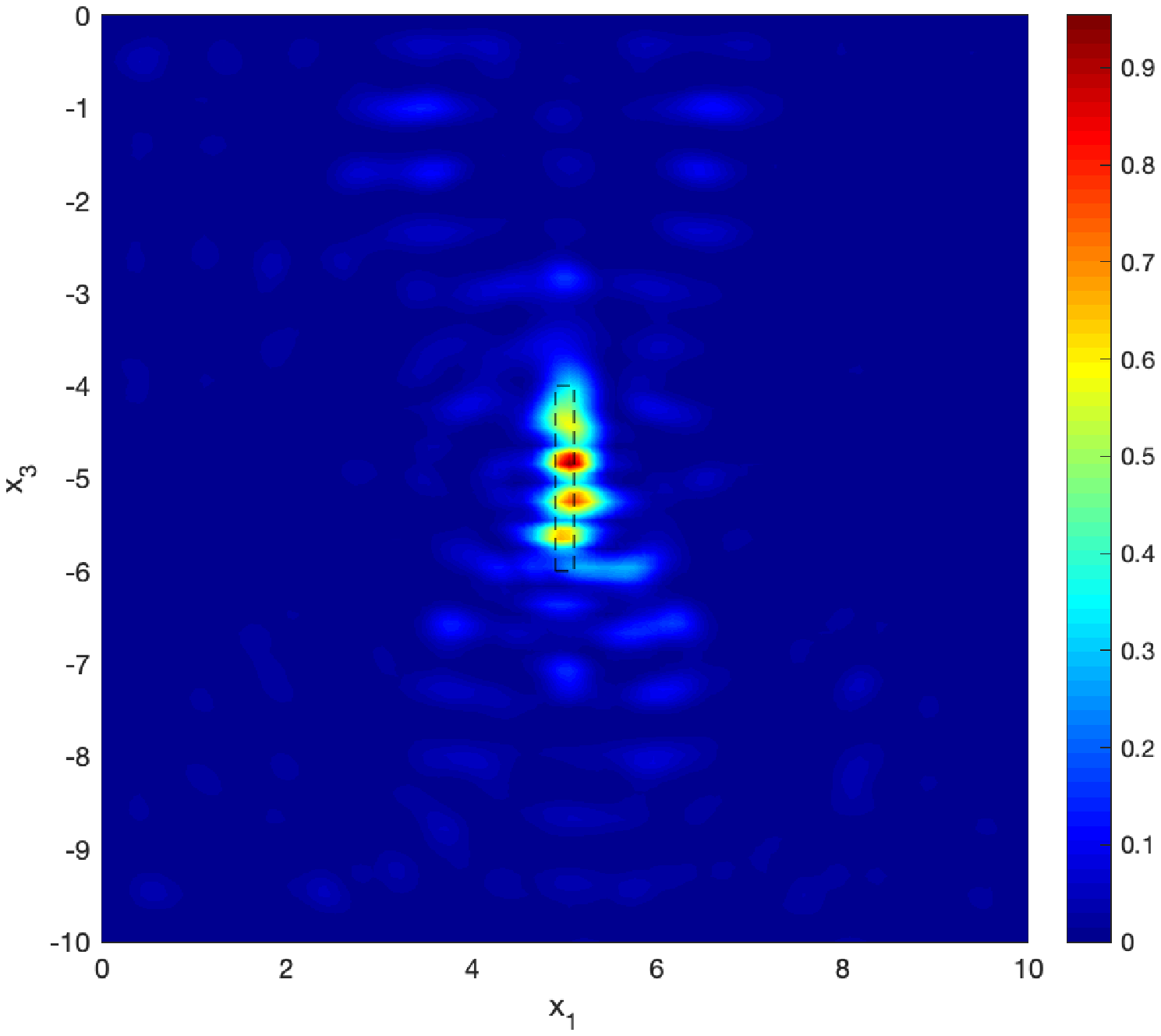}
     \caption{
     \linespread{1}
Image of a cylindrical scatterer. Top left: exact. Top middle: $k=3$; three dimensional image using iso-surface plot with iso-value $0.4$. Top right: $k=5$; three dimensional image using iso-surface plot with iso-value $0.3$.
In the middle ($k=3$) and bottom ($k=5$) row, we plot the cross section images where the exact geometry is indicated by the dashed line. Middle/bottom row left: $x_1x_2$-cross section image.  Middle/bottom row middle: $x_2x_3$-cross section image. Middle/bottom row right: $x_1x_3$-cross section image. 
     } \label{fig lcyl}
    \end{figure}

Another set of numerical examples is imaging of a scatterer consisting of two balls (top in Fig.~\ref{fig 3ball}) and a scatterer consisting of three balls (bottom in Fig.~\ref{fig 3ball}) respectively. The scatterer consisting of two balls is given by
\begin{equation*}
\{\bx: |\bx-(3,3,-5)|<0.5\} \cup \{\bx: |\bx-(7,7,-5)|<0.5\},
\end{equation*}
and the scatterer consisting of three balls is given by
\begin{equation*}
\{\bx: |\bx-(3,3,-5)|<0.5\} \cup \{\bx: |\bx-(7,7,-5)|<0.5\} \cup \{\bx: |\bx-(5,5,-5)|<0.5\}.
\end{equation*}
We can clearly distinguish three balls from two balls.
    \begin{figure}[ht!]
        \includegraphics[width=0.33\linewidth]{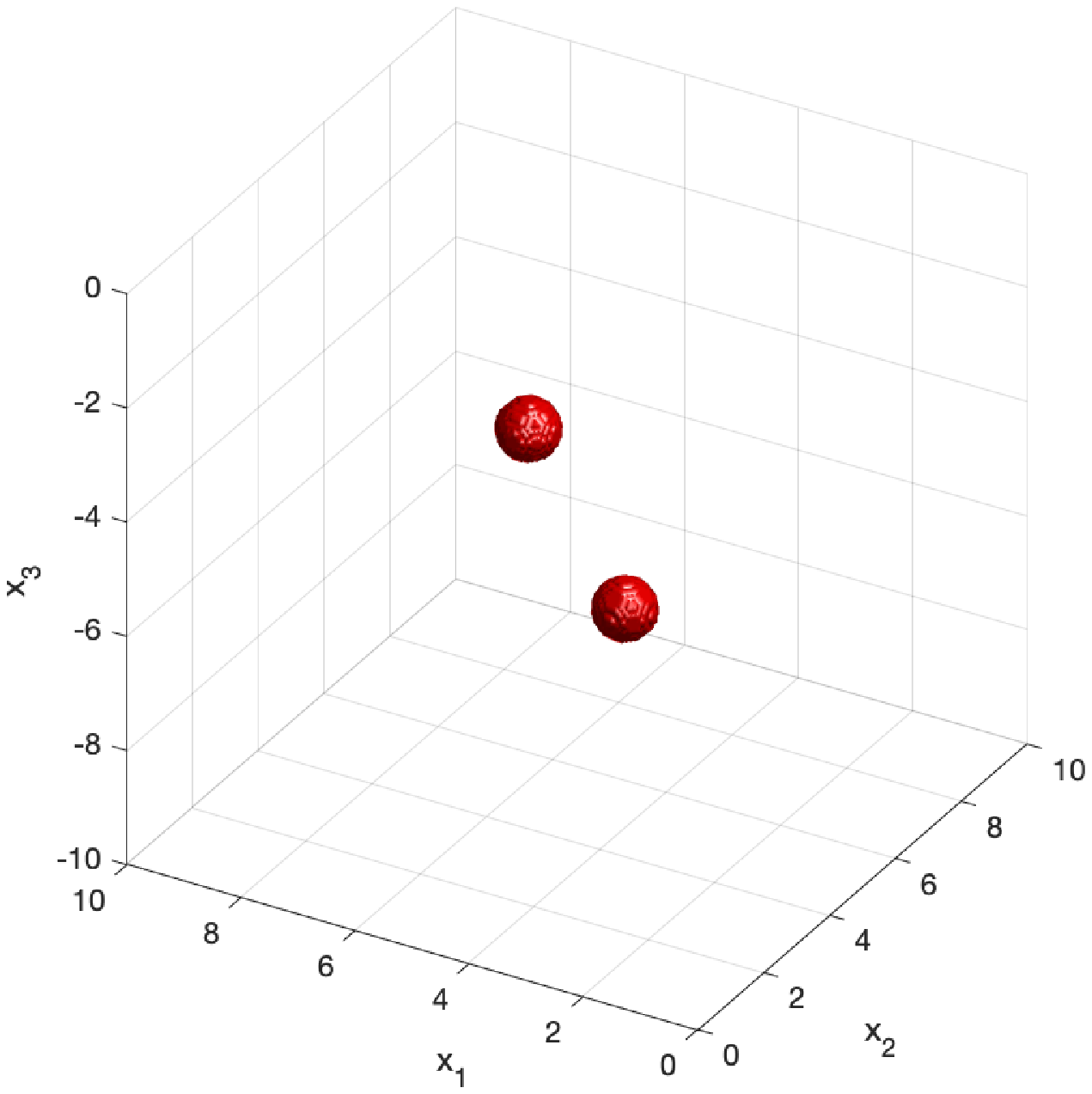}\includegraphics[width=0.33\linewidth]{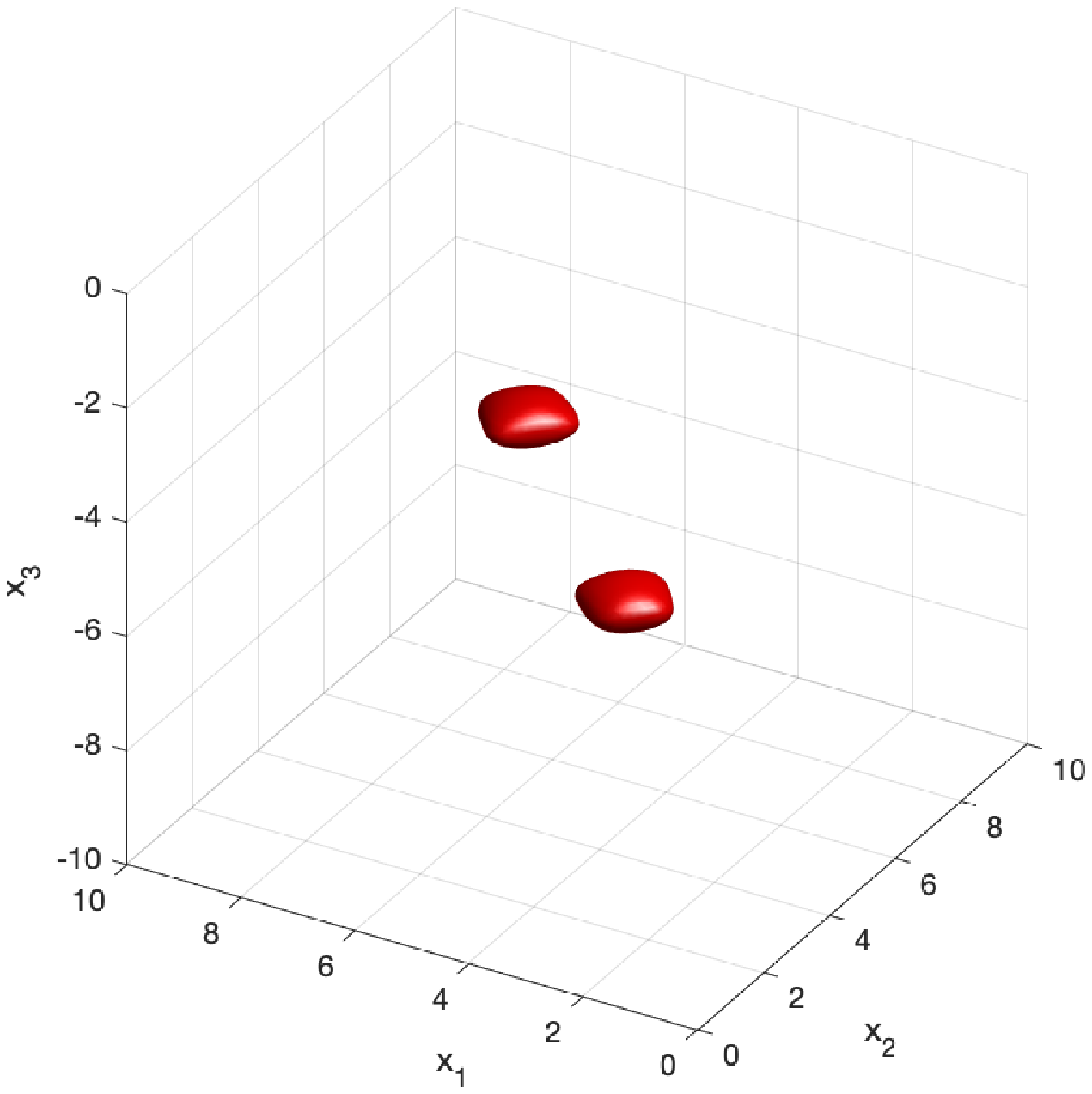}\includegraphics[width=0.33\linewidth]{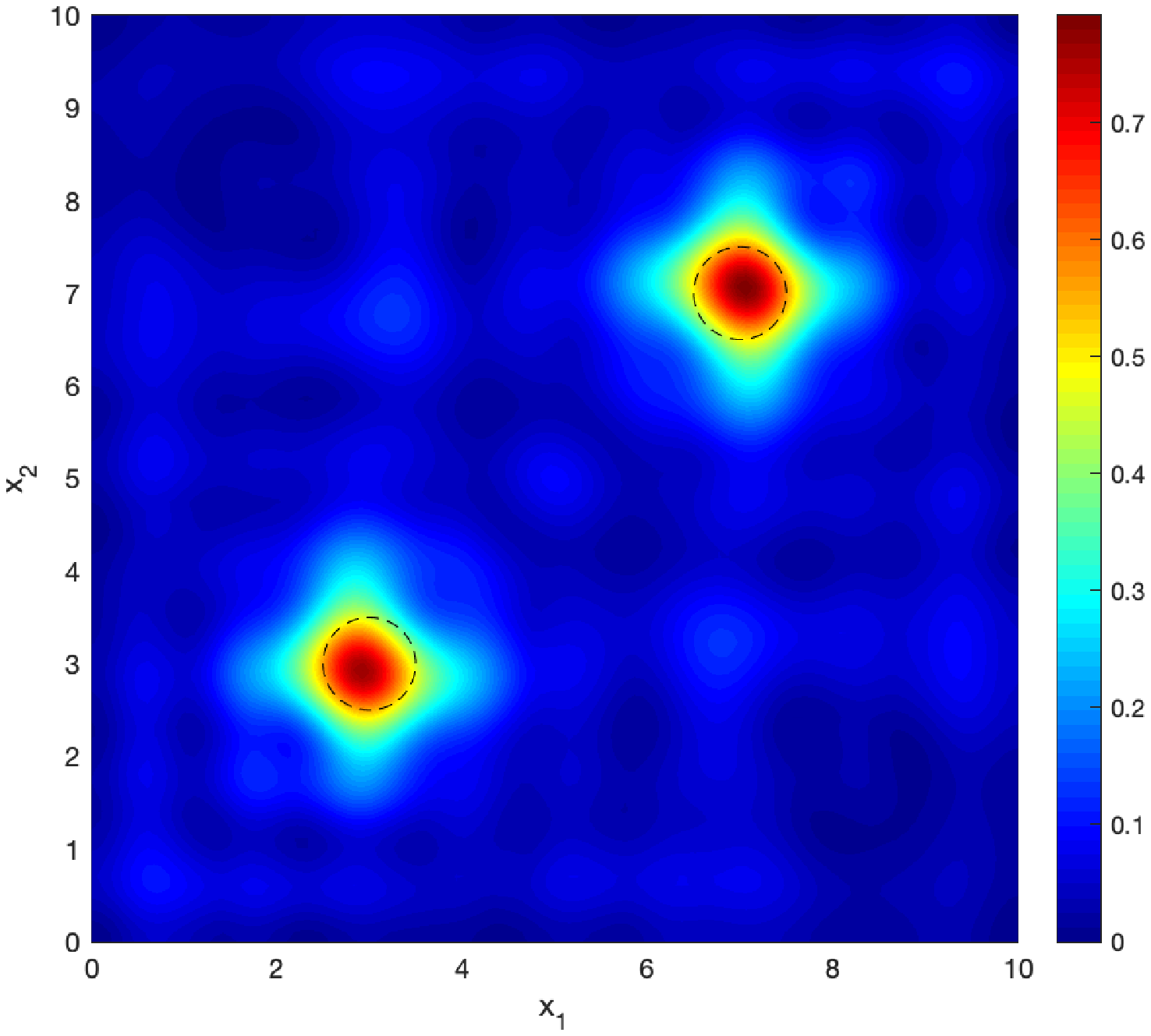}
    \includegraphics[width=0.33\linewidth]{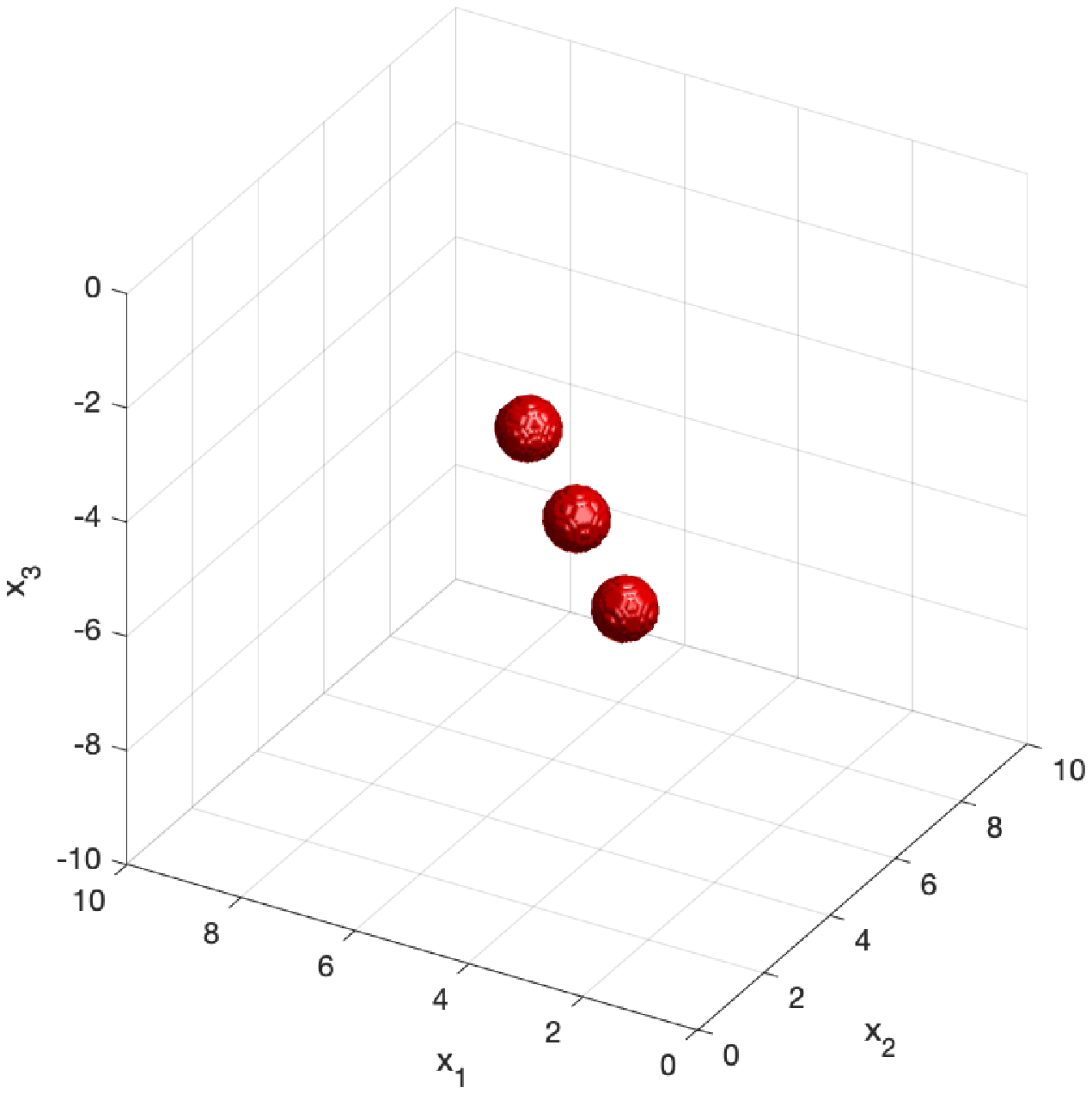}\includegraphics[width=0.33\linewidth]{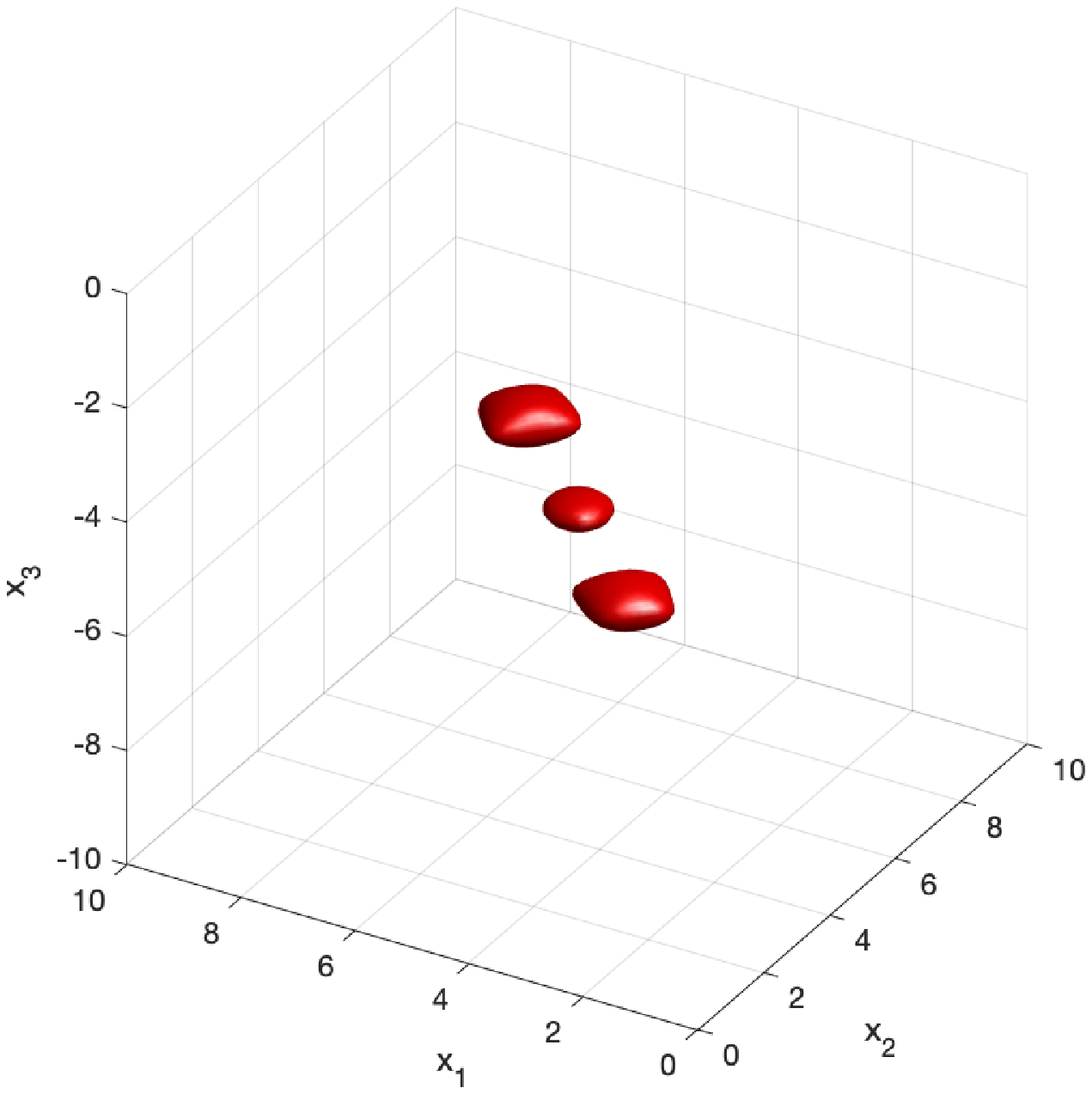}\includegraphics[width=0.33\linewidth]{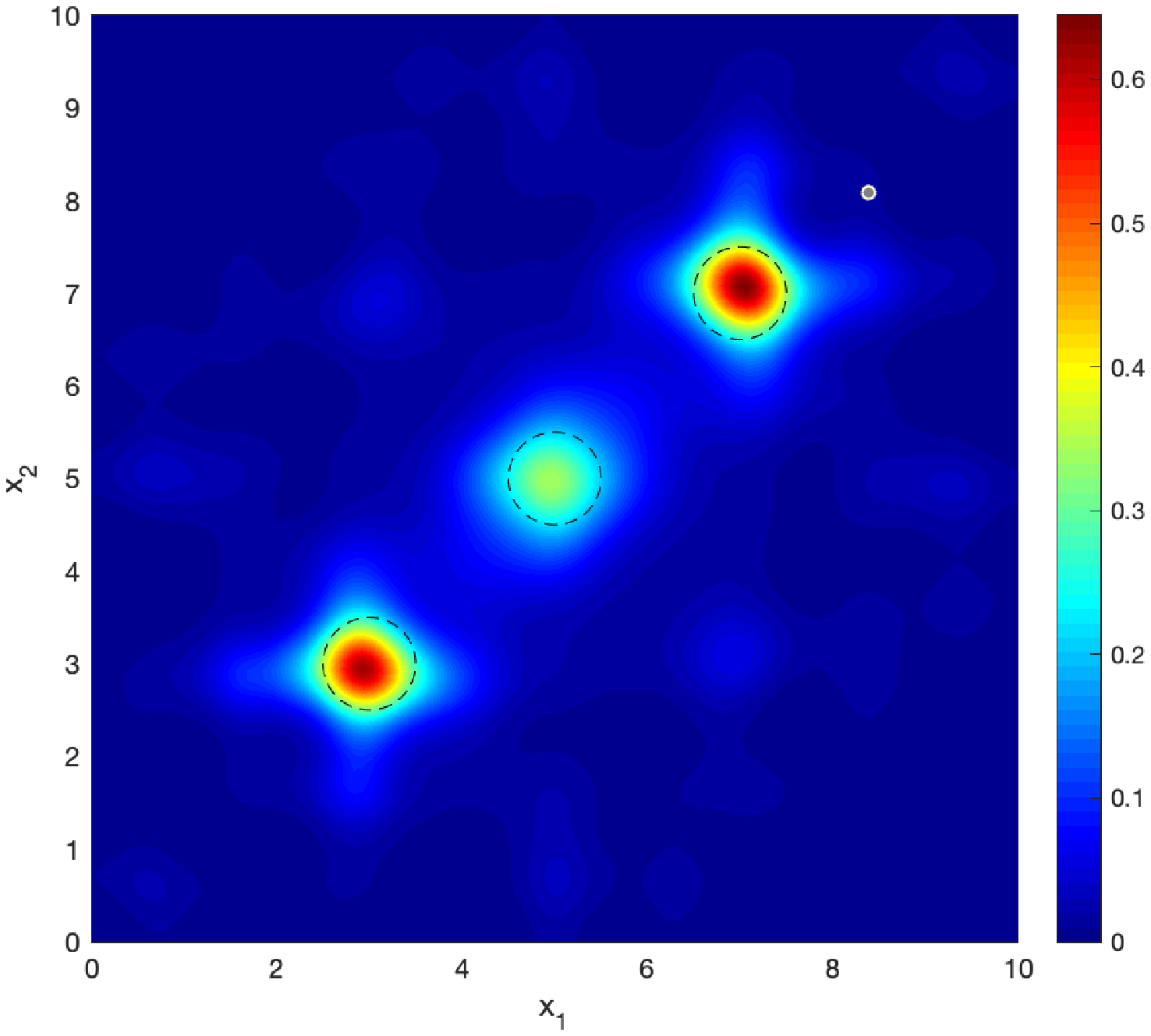}
     \caption{
     \linespread{1}
Top: image of two balls. Bottom: image of three balls. Left: exact. Middle: three dimensional image using iso-surface plot (with iso-value 0.2). Right: $x_1x_2$-cross section image.
     } \label{fig 3ball}
    \end{figure}
    
{
The next set of numerical example is imaging of a larger scatterer using different wavenumbers. The scatterer is a  ball given by
\begin{equation*}
\{\bx: |\bx-(5,5,-5)|<2\}.
\end{equation*}
We note that a similar ball (indeed they are of the same size when the waveguide length is scaled to the same) was also reported in \cite{monk2019near}. In Fig. \ref{fig lball}, the left plot is the exact ball. The middle plot is imaging with $k=1$ (in which case $M=12$ and $N=6$, and we use iso-surface plot with iso-value $0.6$). The right plot is imaging with $k=3$ (in which case $M=82$ and $N=64$, and we use iso-surface plot with iso-value $0.5$). We observe that one can quickly locate the ball using $k=1$ and obtain better images using higher wavenumber (i.e. more propagating modes).
}
    \begin{figure}[ht!]
    \includegraphics[width=0.33\linewidth]{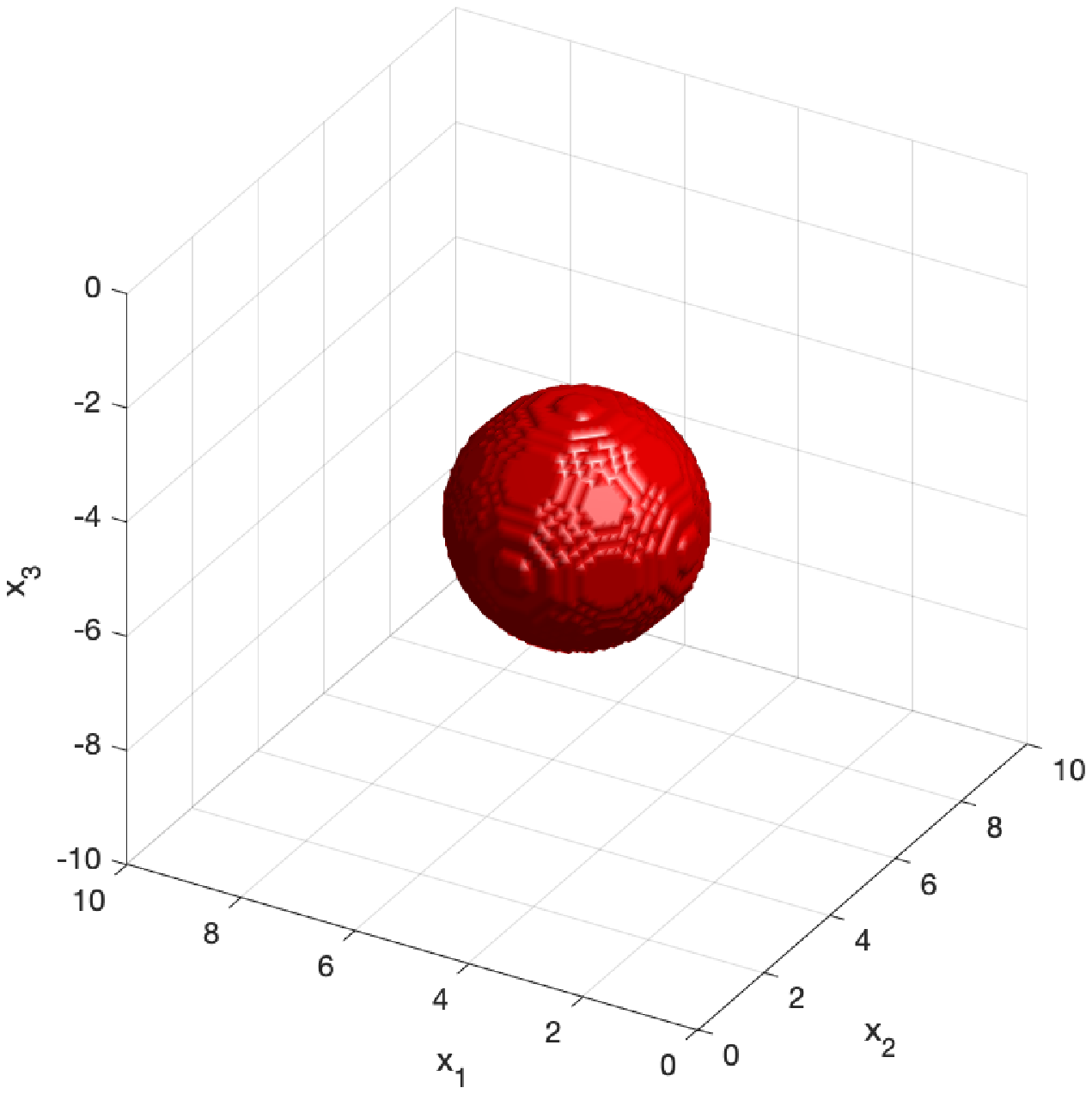}\includegraphics[width=0.33\linewidth]{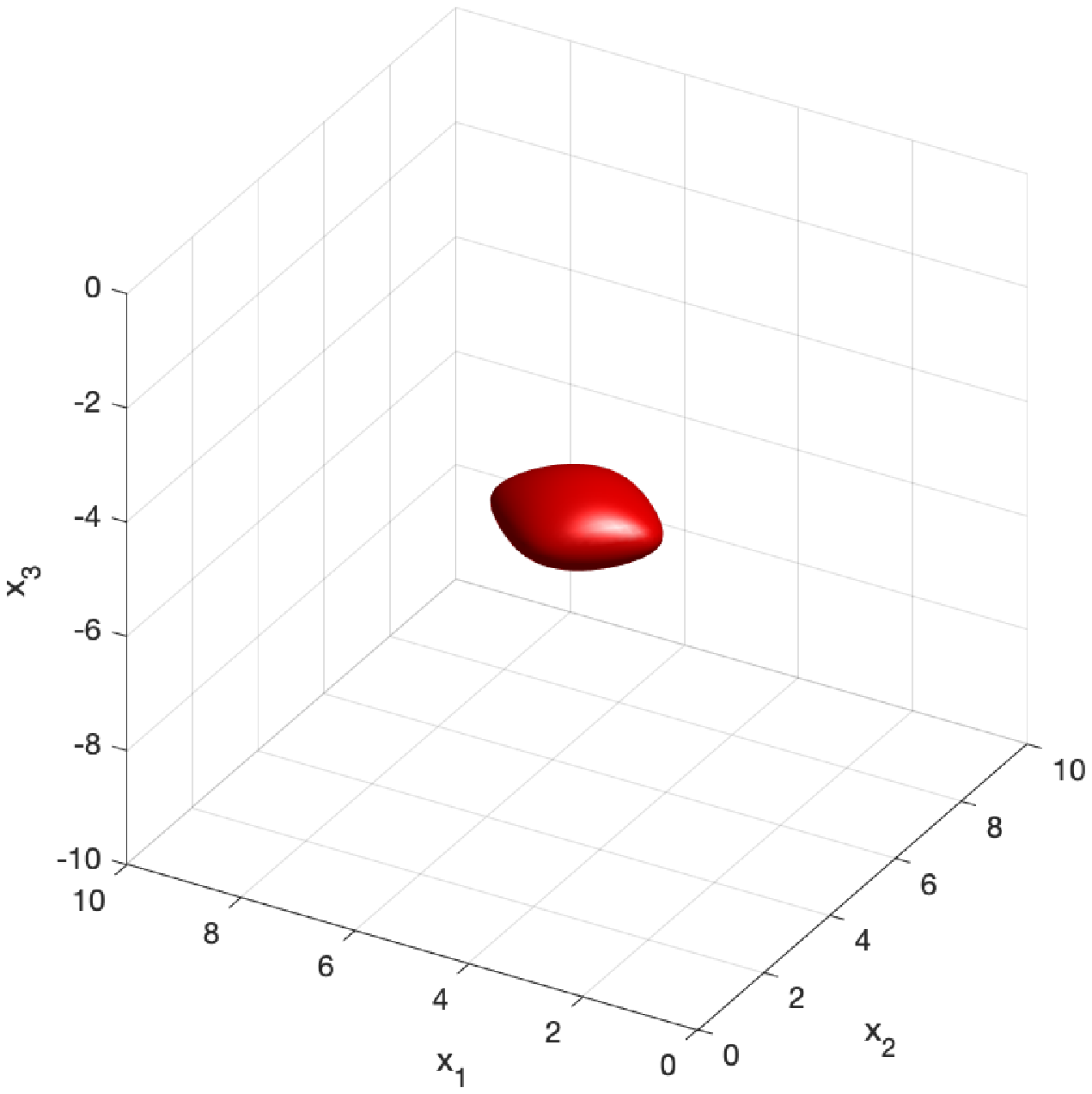}\includegraphics[width=0.33\linewidth]{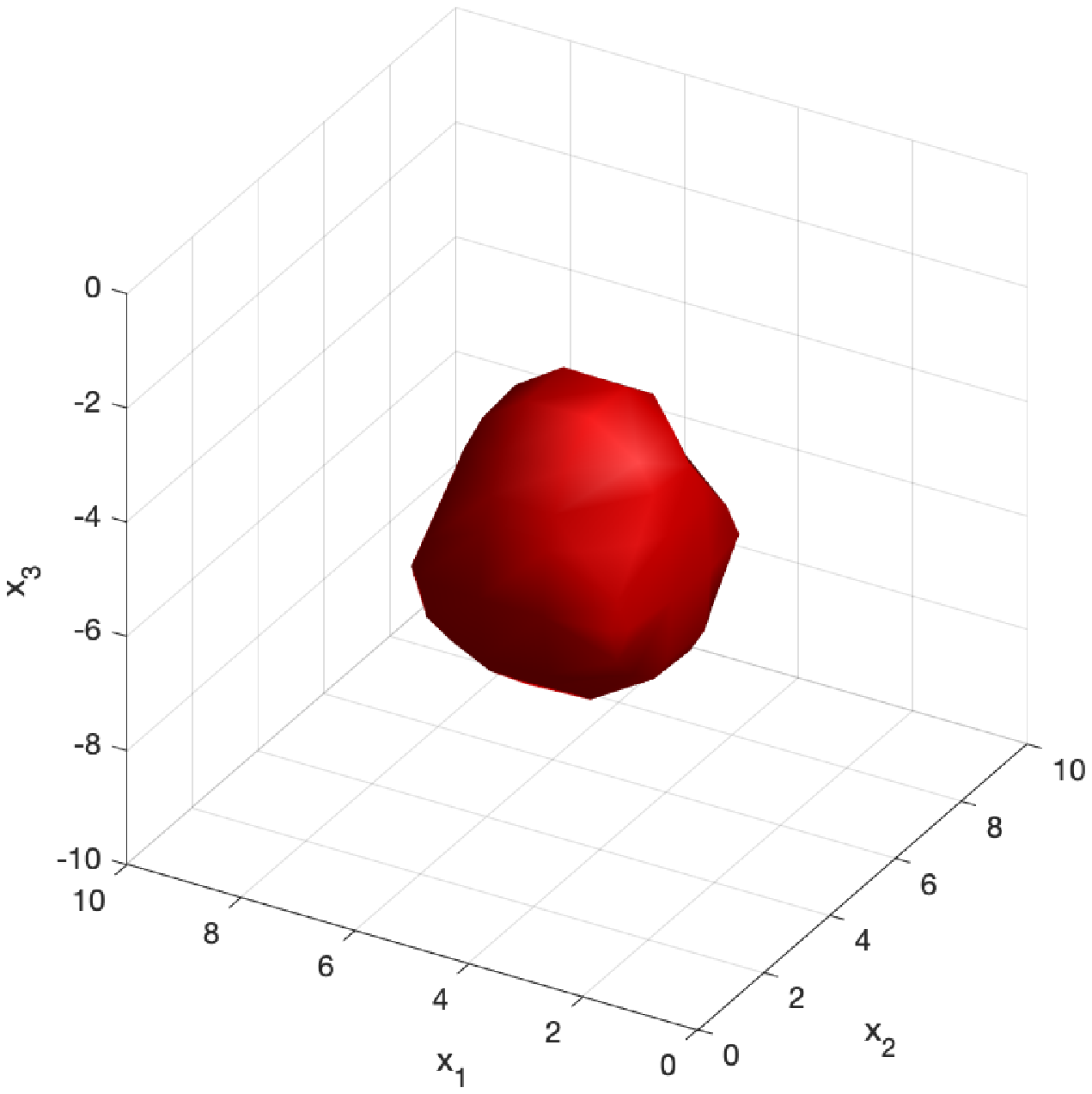}
     \caption{
     \linespread{1}
Image of a large ball. Left: exact. Middle: three dimensional image when $k=1$. Right:  three dimensional image when $k=3$.
     } \label{fig lball}
    \end{figure}

{
Finally we illustrate the robustness of the imaging method with respect to noises in Fig. \ref{fig noise}. We plot the iso-surface (with the same iso-value 0.4) image of the L-shape scatterer with $10\%$ and $30\%$ noise respectively. Our imaging method is observed to be robust with respect to noises.
}
    
        \begin{figure}[ht!]
        \includegraphics[width=0.33\linewidth]{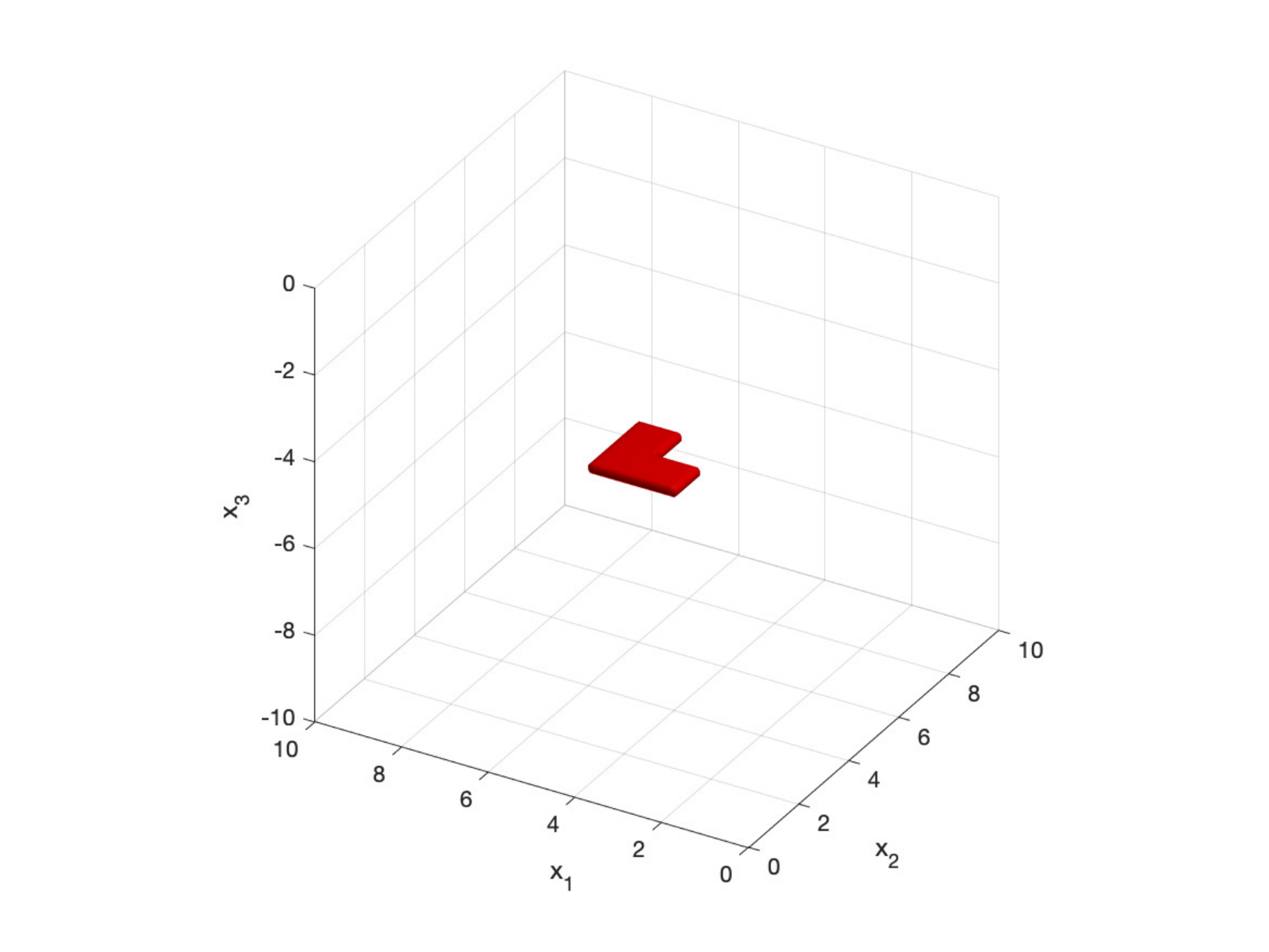}
        \includegraphics[width=0.33\linewidth]{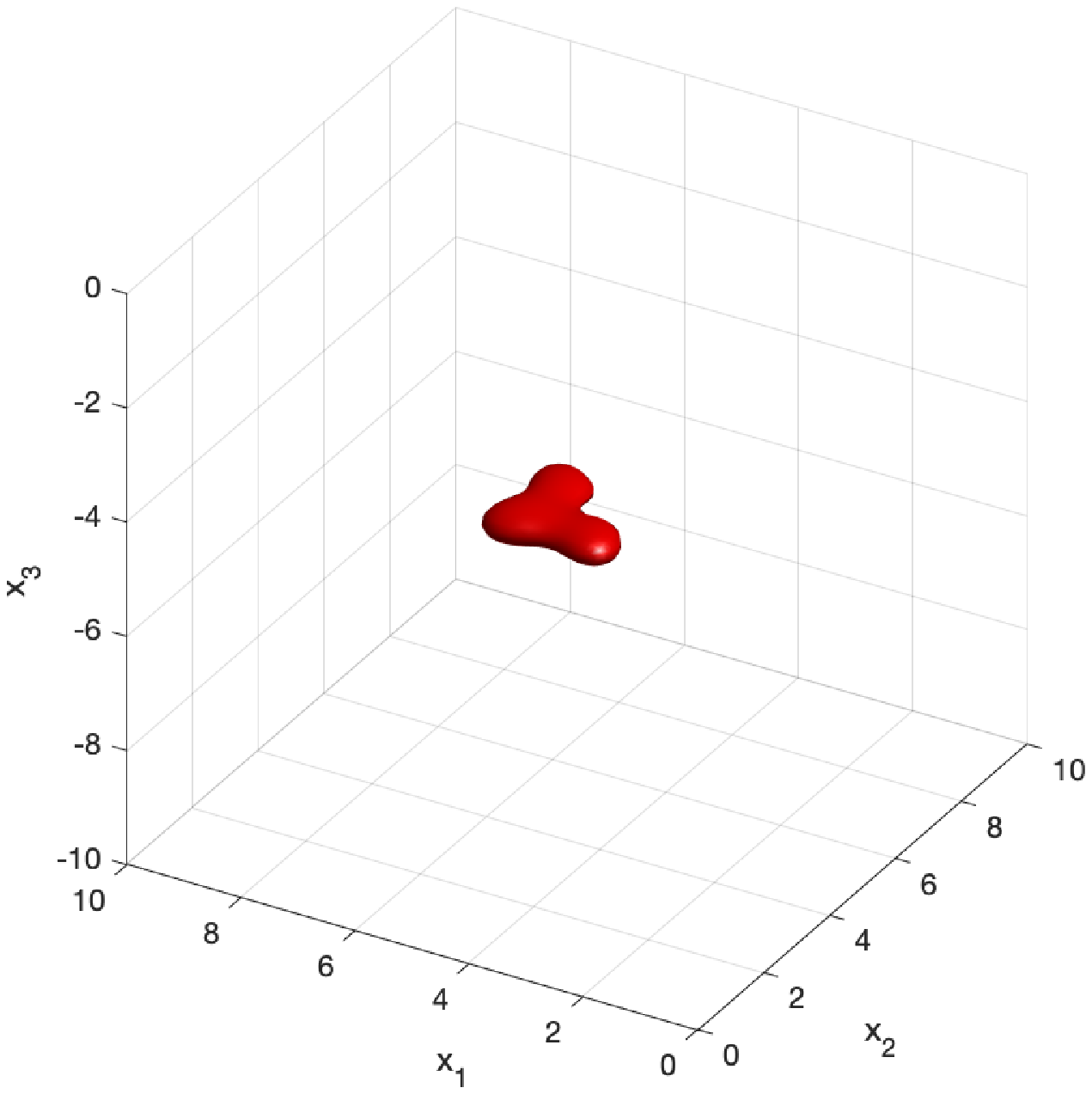}\includegraphics[width=0.33\linewidth]{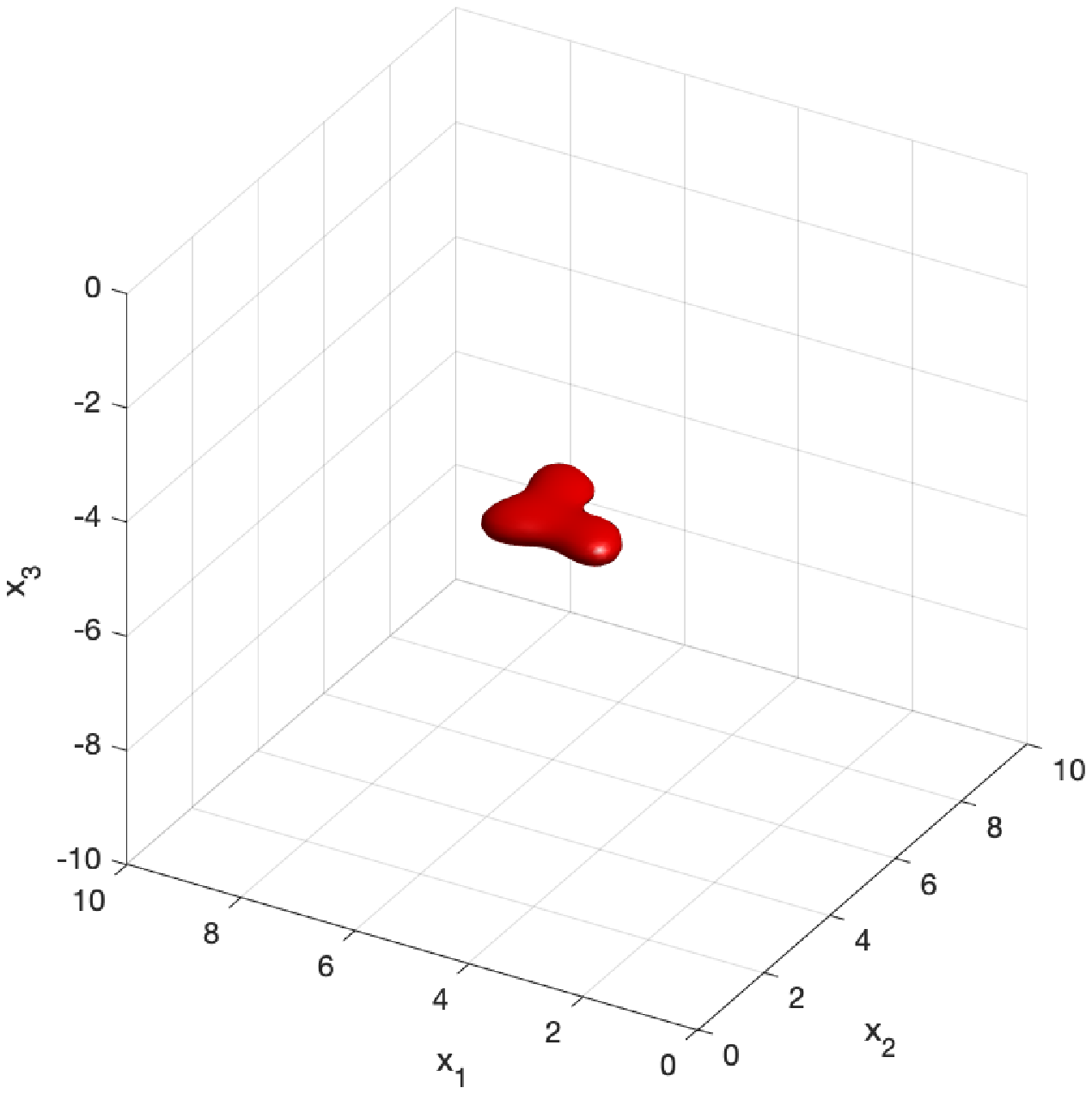}
     \caption{
     \linespread{1}
Iso-surface (with the same iso-value 0.4) image of the L-shape scatterer. Left: exact. Middle: $10\%$ noise. Right: $30\%$ noise.
     } \label{fig noise}
    \end{figure}

\section{Conclusions}\label{conclusion}
The analysis and numerical examples show that our sampling type method is capable to image extended scatterers in the electromagnetic waveguide.  Based on  integrating the measurements and a known function over the measurement surface directly, our sampling method is robust, computationally efficient, and does not require a priori estimate on the measurement noise. Our future work includes sampling type methods for waveguide in the time domain, and we expect that it will improve the image by using multi-frequency data directly in the time domain.


\medskip
Received xxxx 20xx; revised xxxx 20xx.
\medskip


\begin{thebibliography}{99}

\bibitem{arens2011direct}
T~Arens, D~Gintides, and A~Lechleiter.
\newblock Direct and inverse medium scattering in a three-dimensional
  homogeneous planar waveguide.
\newblock {\em SIAM Journal on Applied Mathematics}, 71(3):753--772, 2011.

\bibitem{baggeroer1993overview}
AB~Baggeroer, WA~Kuperman, and PN~Mikhalevsky.
\newblock An overview of matched field methods in ocean acoustics.
\newblock {\em IEEE Journal of Oceanic Engineering}, 18(4):401--424, 1993.

\bibitem{borcea2019factorization}
L~Borcea and S~Meng.
\newblock Factorization method versus migration imaging in a waveguide.
\newblock {\em Inverse Problems}, 35(12):124006, 2019.

\bibitem{borcea2015imaging}
L~Borcea and DL~Nguyen.
\newblock Imaging with electromagnetic waves in terminating waveguides.
\newblock {\em Inverse problems and imaging}, 10:915--941, 2016.

\bibitem{bourgeois2014identification}
L~Bourgeois and S~Fliss.
\newblock On the identification of defects in a periodic waveguide from far
  field data.
\newblock {\em Inverse Problems}, 30(9):095004, 2014.

\bibitem{bourgeois2011use}
L~Bourgeois, F~Le Lou{\"e}r, and E~Lun{\'e}ville.
\newblock On the use of Lamb modes in the linear sampling method
  for elastic waveguides.
\newblock {\em Inverse Problems}, 27(5):055001, 2011.

\bibitem{bourgeois2008linear}
L~Bourgeois and E~Lun{\'e}ville.
\newblock The linear sampling method in a waveguide: a modal formulation.
\newblock {\em Inverse problems}, 24(1):015018, 2008.

\bibitem{bourgeois2012use}
L~Bourgeois and E~Lun{\'e}ville.
\newblock On the use of sampling methods to identify cracks in acoustic
  waveguides.
\newblock {\em Inverse Problems}, 28(10):105011, 2012.

\bibitem{bourgeois2013use}
L~Bourgeois and E~Lun{\'e}ville.
\newblock On the use of the linear sampling method to identify cracks in
  elastic waveguides.
\newblock {\em Inverse Problems}, 29(2):025017, 2013.

\bibitem{cakoni2016qualitative}
F~Cakoni and D~Colton.
\newblock {\em Qualitative Approach to Inverse Scattering Theory}.
\newblock Springer, 2016.

\bibitem{cakoni2016inverse}
F~Cakoni, D~Colton, and H~Haddar.
\newblock {\em Inverse Scattering Theory and Transmission Eigenvalues},
  volume~88.
\newblock SIAM, 2016.

\bibitem{cakoni2011linear}
F~Cakoni, D~Colton, and P~Monk.
\newblock {\em The linear sampling method in inverse electromagnetic
  scattering}, volume~80.
\newblock SIAM, 2011.

\bibitem{chen2017direct}
J~Chen and G~Huang.
\newblock A direct imaging method for inverse electromagnetic scattering
  problem in rectangular waveguide.
\newblock {\em Communications in Computational Physics}, 23:1415--1433, 2017.

\bibitem{colton2003linear}
D~Colton, H~Haddar, and M~Piana.
\newblock The linear sampling method in inverse electromagnetic scattering
  theory.
\newblock {\em Inverse problems}, 19(6):S105, 2003.

\bibitem{colton2012inverse}
D~Colton and R~Kress.
\newblock {\em Inverse acoustic and electromagnetic scattering theory},
  volume~93.
\newblock Springer Science \& Business Media, 2012.

\bibitem{griesmaier2011multi}
R~Griesmaier.
\newblock Multi-frequency orthogonality sampling for inverse obstacle
  scattering problems.
\newblock {\em Inverse Problems}, 27(8):085005, 2011.

\bibitem{haack1995state}
A~Haack, J~Schreyer, and G~Jackel.
\newblock State-of-the-art of non-destructive testing methods for determining
  the state of a tunnel lining.
\newblock {\em Tunnelling and Underground Space Technology incorporating
  Trenchless Technology Research}, 4(10):413--431, 1995.

\bibitem{kirsch2004factorization}
A~Kirsch.
\newblock The factorization method for maxwell's equations.
\newblock {\em Inverse Problems}, 20(6):S117, 2004.

\bibitem{kirsch2008factorization}
A~Kirsch and N~Grinberg.
\newblock {\em The factorization method for inverse problems}, volume~36.
\newblock Oxford University Press, 2008.

\bibitem{liu2017novel}
X~Liu.
\newblock A novel sampling method for multiple multiscale targets from
  scattering amplitudes at a fixed frequency.
\newblock {\em Inverse Problems}, 33(8):085011, 2017.

\bibitem{monk2012sampling}
P~Monk and V~Selgas.
\newblock Sampling type methods for an inverse waveguide problem.
\newblock {\em Inverse Problems and Imaging}, 6(4):709--747, 2012.

\bibitem{monk2016inverse}
P~Monk and V~Selgas.
\newblock An inverse acoustic waveguide problem in the time domain.
\newblock {\em Inverse Problems}, 32(5):055001, 2016.

\bibitem{monk2019near}
P~Monk, V~Selgas, and F~Yang.
\newblock Near-field linear sampling method for an inverse problem in an
  electromagnetic waveguide.
\newblock {\em Inverse Problems}, 35(6):065001, 2019.

\bibitem{potthast2010study}
R~Potthast.
\newblock A study on orthogonality sampling.
\newblock {\em Inverse Problems}, 26(7):074015, 2010.

\bibitem{rizzo2010ultrasonic}
P~Rizzo, A~Marzani, J~Bruck, et~al.
\newblock Ultrasonic guided waves for nondestructive evaluation/structural
  health monitoring of trusses.
\newblock {\em Measurement science and technology}, 21(4):045701, 2010.

\bibitem{schoberl1997netgen}
J~Sch{\"o}berl.
\newblock Netgen an advancing front 2d/3d-mesh generator based on abstract
  rules.
\newblock {\em Computing and visualization in science}, 1(1):41--52, 1997.

\bibitem{sun2013reconstruction}
J~Sun and C~Zheng.
\newblock Reconstruction of obstacles embedded in waveguides.
\newblock {\em Contemporary Mathematics}, 586:341--350, 2013.

\bibitem{tsogka2017imaging}
C~Tsogka, DA~Mitsoudis, and S~Papadimitropoulos.
\newblock Imaging extended reflectors in a terminating waveguide.
\newblock {\em SIAM Journal on Imaging Sciences}, 11(2):1680--1716, 2018.

\bibitem{xu2000generalized}
Y~Xu, C~Mawata, and W~Lin.
\newblock Generalized dual space indicator method for underwater imaging.
\newblock {\em Inverse Problems}, 16(6):1761, 2000.

\end{thebibliography}
\end{document}